\def\R{\mathbb R}
\def\N{\mathbb N}
\newtheorem{theorem}{Theorem}[section]
\newtheorem{lemma}[theorem]{Lemma}
\theoremstyle{definition}
\newtheorem{definition}[theorem]{Definition}
\newtheorem{corollary}[theorem]{Corollary}
\newtheorem{proposition}[theorem]{Proposition}
\theoremstyle{remark}
\newtheorem{remark}[theorem]{{\bf Remark}}
\numberwithin{equation}{section}
\begin{document}
\title[TRAVELING WAVES IN NONCLASSICAL DIFFUSION EQUATIONS WITH
  DELAY]
{TRAVELING WAVES IN NONCLASSICAL DIFFUSION EQUATIONS WITH
 DELAY}

\author  [W. Barker, L.X. Dong$^{\natural}$, V. T. Luong, N.D. Toan] {William Barker, Le Xuan Dong, Vu Trong Luong, Nguyen Duong Toan}

\address{William Barker 
 \hfill\break 
 Department of Mathematics and Statistics, University of Arkansas at Little Rock,  2801
 S University Ave, Little Rock, AR 72204. USA.}
\email{wkbarker@ualr.edu}
 
\address{Le Xuan Dong
	\hfill\break 
	Faculty of Basic Science, Viet Tri University of Industry,  9 Tien Son, Tien Cat, Viet Tri, Phu Tho, Viet Nam}
\email{donglx@vui.edu.vn}

\address{Vu Trong Luong
	\hfill\break 
	VNU-University of Education, Hanoi 144 Xuan Thuy, Cau Giay, Hanoi, Vietnam
	Vietnam National University}
\email{vutrongluong@vnu.edu.vn}

\address{Nguyen Duong Toan \hfill\break
Department of Mathematics, Haiphong University\hfill\break
171 Phan Dang Luu, Kien An, Haiphong, Vietnam}
\email{toannd@dhhp.edu.vn}

\subjclass[2020]{35C07, 35K57}
\keywords{traveling waves; nonclassical diffusion; diffusion delay; Green's function; monotone iteration; quasi--upper and lower solutions.}
\thanks{$^{\natural}$ Corresponding author: toannd@dhhp.edu.vn}

\begin{abstract}
	This paper is concerned with the existence of monotone traveling wave solutions
	for a class of nonclassical reaction--diffusion equations with a discrete delay
	in the reaction term, namely
	\[
	\frac{\partial u(x,t)}{\partial t}
	= D\,\frac{\partial^2 u(x,t)}{\partial x^2}
	+ \alpha\,\frac{\partial^3 u(x,t)}{\partial x^2\partial t}
	+ f\big(u(x,t-\tau)\big),
	\]
	where $D>0$, $\alpha\in\mathbb{R}$, $\tau>0$, and $f$ satisfies bistable or monostable
	conditions on $[0,K]$. By applying the traveling-wave ansatz $u(x,t)=\phi(x+ct)$,
	the problem is reduced to a functional differential equation for the
	wave profile. We employ the Green's function associated with the underlying
	third-order operator and develop a monotone iteration scheme based on 
	upper and lower solutions in a weighted Banach space. Using Schauder's fixed
	point theorem, we establish the existence of monotone wavefronts connecting the
	steady states $0$ and $K$. Our approach provides a unified framework that
	extends the non-delayed nonclassical model to the delayed setting, capturing the
	effect of discrete reaction delay on the propagation of traveling waves.
\end{abstract}

\maketitle
%

 \section{Introduction}
 
 Reaction--diffusion equations with delayed effects have received considerable attention 
 in the study of wave propagation phenomena arising in biology, physics, and engineering. 
 In many physical models, classical diffusion fails to accurately capture memory or nonlocal 
 effects, leading to the introduction of nonclassical diffusion terms, such as those involving 
 mixed derivatives or higher-order temporal interactions; see 
 Aifantis~\cite{Aifantis}, Ting~\cite{Ting}, and Truesdell--Noll~\cite{Truesdell}. 
 An important example is the nonclassical reaction--diffusion equation
 with an additional mixed derivative,
 \begin{equation}\label{Nonclassical}
 	\frac{\partial u(x,t)}{\partial t}
 	= D \frac{\partial^2 u(x,t)}{\partial x^2}
 	+ \alpha \frac{\partial^3 u(x,t)}{\partial x^2 \partial t}
 	+ f\big(u_t(x)\big),
 \end{equation}
 where $D>0$, $\alpha\in\mathbb{R}$, and $f:C([-\tau,0],\mathbb{R})\to\mathbb{R}$ 
 introduces a delayed reaction term depending on the history segment
 $u_t(x)(r)=u(x,t+\theta)$ for $\theta\in[-\tau,0]$. Such delays may arise due to maturation time 
 in population dynamics or latency effects in phase transitions; 
 we refer to the classical monographs of Diekmann et al.~\cite{Diekmann} and Wu and Zou~\cite{wuzou}.
 
 Traveling wave solutions, of the form $u(x,t)=\phi(\xi)$ with $\xi=x+ct$, 
 play a crucial role in describing front propagation and transition layers 
 between stable equilibria. When \eqref{Nonclassical} is transformed into 
 the corresponding profile equation, one obtains a functional differential 
 equation of . In the absence of delay, nonclassical wave equations 
 have been studied in \cite{Bark}, where the method of upper and lower solutions 
 combined with Green’s functions was used to establish monotone wavefronts.
 Recently, equations with delays in both diffusion and reaction terms were 
 analyzed by Barker and Nguyen~\cite{BarkNguy}, and further developed in 
 \cite{BarkNguyTha1,BarkNguyTha2,Zhang2025}, highlighting the delicate role of delay 
 on wave speed and monotonicity.
 
 In the broader literature, there has been a surge of recent works focusing on 
 traveling waves in delayed or nonlocal media. Zhang, Hu, and Huang~\cite{Zhang2025} 
 investigated monotone waves for systems with delay in both diffusion and reaction. 
 Li et al.~(2024) considered a time-delayed nonlocal model arising in epidemiology, 
 while Huang (2025) analyzed $p$-Laplacian diffusion with temporal delay. 
 Zhu and Peng (2025) treated discrete-time predator--prey models with diffusion 
 and delay via an iterative fixed point framework. On the methodological side, 
 Boumenir and Minh (2006) applied Perron’s theorem and monotone iteration 
 to mixed-type functional equations, laying foundations relevant to our approach.
 
 In this work, we establish the existence of monotone traveling wave solutions 
 for \eqref{Nonclassical} by employing a Green's function representation of 
 the associated linear operator and a monotone iteration scheme based on upper 
 and lower solutions. Unlike the classical case, the presence of delay in the 
 reaction term introduces functional dependencies in the nonlinear operator, 
 leading to technical challenges in verifying invariant cones and regularity.
 We overcome these obstacles by constructing a suitable operator $F=-\mathcal{L}^{-1}H$ 
 and showing that iterates preserve order and improve regularity.
 
 The rest of the paper is organized as follows. In Section~\ref{Preliminaries and Notation}, 
 we introduce the functional framework, the Green’s function associated with the linear 
 operator, and state the hypotheses on $f$. Section~\ref{Main Results} develops the 
 monotone iteration scheme and proves the existence of traveling wave profiles 
 through the Schauder fixed point theorem. In Section~\ref{applications}, 
 we apply the abstract theory to a concrete nonclassical model with discrete delay, 
 construct explicit super- and subsolutions, and establish the existence of a 
 monotone wavefront connecting two equilibria.

 \medskip

\section{Preliminaries and Notation}\label{Preliminaries and Notation}
We adopt the following standard notational conventions throughout this paper. Let $ \mathbb{R} $ denote the field of real numbers. For any $ z \in \mathbb{R} $, we write $ \Re(z) = z $.

We denote by $ BC(\mathbb{R}, \mathbb{R}) $ the space of all bounded continuous functions $ f \colon \mathbb{R} \to \mathbb{R} $, equipped with the supremum norm
$$
\| f \| := \sup_{t \in \mathbb{R}} | f(t) |.
$$
Similarly, $ BC^k(\mathbb{R}, \mathbb{R}) $ denotes the subspace of $ C^k(\mathbb{R}, \mathbb{R}) $ consisting of functions whose derivatives up to order $ k $ are bounded. When boundedness is not assumed, we write $ C(\mathbb{R}, \mathbb{R}) $ or $ C^k(\mathbb{R}, \mathbb{R}) $.

A natural partial order on $ BC(\mathbb{R}, \mathbb{R}) $ is defined by
$$
f \le g \quad \Longleftrightarrow \quad f(t) \le g(t) \text{ for all } t \in \mathbb{R},
$$
and
$$
f < g \quad \Longleftrightarrow \quad f(t) \le g(t) \text{ for all } t \in \mathbb{R}, \text{ with } f(t) \ne g(t) \text{ for all } t \in \mathbb{R}.
$$
An operator $ P $ acting on a subset $ S \subset BC(\mathbb{R}, \mathbb{R}) $ is said to be \emph{monotone} if $ f \le g $ implies $ Pf \le Pg $ for all $ f, g \in S $. In particular, if $ P $ is linear and acts on all of $ BC(\mathbb{R}, \mathbb{R}) $, it is monotone if and only if it maps nonnegative functions to nonnegative functions.

For any $ \alpha \in \mathbb{R} $, we denote by $ \hat{\alpha} $ the constant function defined by $ \hat{\alpha}(t) = \alpha $ for all $ t \in \mathbb{R} $.

We now recall standard definitions of function spaces used throughout this work.

\begin{definition}[$ L^p $ Spaces]
Let $ n \in \mathbb{N} $. For $ 1 \le p \le \infty $, the space $ L^p(\mathbb{R}, \mathbb{R}) $ consists of all measurable functions $ f \colon \mathbb{R} \to \mathbb{R} $ such that
$$
\| f \|_{L^p} =
\begin{cases}
\left( \displaystyle\int_{\mathbb{R}} |f(x)|^p \, dx \right)^{1/p}, & 1 \le p < \infty, \\[6pt]
\displaystyle \operatorname{ess\,sup}_{x \in \mathbb{R}} |f(x)|, & p = \infty,
\end{cases}
$$
is finite.
\end{definition}

\begin{definition}[Sobolev Space $ W^{1,p} $]
For $ 1 \le p \le \infty $, the Sobolev space $ W^{1,p}(\mathbb{R}, \mathbb{R}) $ is defined by
$$
W^{1,p}(\mathbb{R}, \mathbb{R}) = \left\{ f \in L^p(\mathbb{R}, \mathbb{R}) \;\middle|\; f \text{ is absolutely continuous, and } f' \in L^p(\mathbb{R}, \mathbb{R}) \right\}.
$$
In one dimension, this space is continuously embedded in $ L^\infty(\mathbb{R}, \mathbb{R}) $; that is, there exists a constant $ C > 0 $ such that
$$
\| f \|_{L^\infty} \le C \| f \|_{W^{1,p}} \quad \text{for all } f \in W^{1,p}(\mathbb{R}, \mathbb{R}).
$$
\end{definition}

\begin{definition}[Sobolev Space $ W^{k,\infty} $]
We define
$$
W^{2,\infty}(\mathbb{R}, \mathbb{R})
=
\left\{
f \in L^\infty(\mathbb{R}, \mathbb{R})
\;\middle|\;
f' \in L^\infty(\mathbb{R}, \mathbb{R}),\,
f' \text{ is absolutely continuous, and } f'' \in L^\infty(\mathbb{R}, \mathbb{R})
\right\}.
$$
Equivalently,
$$
W^{2,\infty}(\mathbb{R}, \mathbb{R})
=
\left\{ f \in W^{1,\infty}(\mathbb{R}, \mathbb{R}) \;\middle|\; f' \in W^{1,\infty}(\mathbb{R}, \mathbb{R}) \right\}.
$$
\end{definition}

\begin{definition}[Sobolev Space $ W^{3,\infty} $]
The space $ W^{3,\infty}(\mathbb{R}, \mathbb{R}) $ consists of all functions $ f \in W^{2,\infty}(\mathbb{R}, \mathbb{R}) $ such that the second derivative $ f'' $ is absolutely continuous and the third derivative $ f''' \in L^\infty(\mathbb{R}, \mathbb{R}) $. That is,
$$
W^{3,\infty}(\mathbb{R}, \mathbb{R}) =
\left\{
f \in W^{2,\infty}(\mathbb{R}, \mathbb{R})
\;\middle|\;
f'' \text{ is absolutely continuous, and } f''' \in L^\infty(\mathbb{R}, \mathbb{R})
\right\}.
$$
Equivalently,
$$
W^{3,\infty}(\mathbb{R}, \mathbb{R}) =
\left\{ f \in W^{2,\infty}(\mathbb{R}, \mathbb{R}) \;\middle|\; f'' \in W^{1,\infty}(\mathbb{R}, \mathbb{R}) \right\}.
$$
\end{definition}

 We now recall some important theorems that will be used in the subsequent sections.
 \subsection*{2.2. Rouché's Theorem}
 Let $A$ be an open subset of $\mathbb{C}$, $f$ and $g$ are two analytic functions on $A$. 
 A piecewise continuously differentiable function $\gamma : [a,b] \to \mathbb{C}$ such that 
 $\gamma(a) = \gamma(b)$ is called a circuit. The following theorem (\cite{7}, Rouché's Theorem, p.~247) 
 will be used later on:
 
 \begin{theorem} \label{Rouche}
 	Let $A \subset \mathbb{C}$ be a simply connected domain, $f, g$ two analytic complex valued 
 	functions in $A$. Let $T$ be the (at most denumerable) set of zeros of $f$, 
 	$T'$ the set of zeros of $f+g$ in $A$, $\gamma$ a circuit in $A - T$, defined on an interval $I$. 
 	Then, if $|g(z)| < |f(z)|$ in $\gamma(I)$, the function $f+g$ has no zeros on $\gamma(I)$, and
 	\begin{equation}\label{2.1}
 		\sum_{a \in T} j(a;\gamma)\,\omega(a;f) \;=\; 
 		\sum_{b \in T'} j(b;\gamma)\,\omega(b; f+g),
 	\end{equation}
 	where $j(a;\gamma)$ is the index of $\gamma$ with respect to $a$, and $\omega(a,f)$ is the multiplicity 
 	of the zeros of $f$ at $a$.
 \end{theorem}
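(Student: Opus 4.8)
The plan is to reduce the identity \eqref{2.1} to two applications of the generalized argument principle, connected by a continuity (homotopy) argument in a parameter that interpolates between $f$ and $f+g$. First I would dispose of the nonvanishing claim, which is immediate: if some $z\in\gamma(I)$ satisfied $(f+g)(z)=0$, then $f(z)=-g(z)$ and hence $|f(z)|=|g(z)|$, contradicting the hypothesis $|g(z)|<|f(z)|$ on $\gamma(I)$. The same inequality forces $f(z)\neq 0$ there, consistent with $\gamma$ lying in $A-T$. Thus both $f$ and $f+g$ are analytic and zero-free on the compact set $\gamma(I)$, so their logarithmic derivatives $f'/f$ and $(f+g)'/(f+g)$ are well defined and continuous along $\gamma$.

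Next I would invoke the generalized argument principle in the form valid on a simply connected domain: for $h$ analytic on $A$ with no zeros on $\gamma(I)$,
\[
\frac{1}{2\pi i}\int_\gamma \frac{h'(z)}{h(z)}\,dz \;=\; \sum_{h(a)=0} j(a;\gamma)\,\omega(a;h).
\]
This follows from the residue theorem (in its winding-number form) applied to $h'/h$, whose only singularities in $A$ are simple poles at the zeros of $h$ with residue equal to the multiplicity $\omega(a;h)$. The sum is effectively finite because $j(a;\gamma)=0$ whenever $a$ lies in the unbounded component of $\mathbb{C}\setminus\gamma(I)$, while the zeros of $h$ with nonzero index lie in a compact subset of $A$ and are therefore finite in number. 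Applying this to $h=f$ and to $h=f+g$ turns the two sides of \eqref{2.1} into the contour integrals
\[
N_0 := \frac{1}{2\pi i}\int_\gamma \frac{f'}{f}\,dz, \qquad N_1 := \frac{1}{2\pi i}\int_\gamma \frac{(f+g)'}{f+g}\,dz,
\]
so the theorem reduces to the single identity $N_0=N_1$.

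To prove $N_0=N_1$ I would set $f_t:=f+t\,g$ for $t\in[0,1]$ and consider
\[
N(t) := \frac{1}{2\pi i}\int_\gamma \frac{f_t'(z)}{f_t(z)}\,dz.
\]
For every $z\in\gamma(I)$ and every $t\in[0,1]$ one has $|t\,g(z)|\le|g(z)|<|f(z)|$, hence $|f_t(z)|\ge|f(z)|-|g(z)|>0$ uniformly in $t$. The integrand is therefore jointly continuous in $(t,z)$ on the compact set $[0,1]\times\gamma(I)$ with denominator bounded away from $0$, which makes $t\mapsto N(t)$ continuous. On the other hand, by the argument principle each $N(t)=\sum_a j(a;\gamma)\,\omega(a;f_t)\in\mathbb{Z}$. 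A continuous integer-valued function on the connected interval $[0,1]$ is constant, so $N(0)=N(1)$, that is $N_0=N_1$, which completes the proof.

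The main obstacle, and the step I would treat most carefully, is the passage from the elementary argument principle to its winding-number form together with the verification that the interpolating integral is simultaneously continuous and integer-valued: one must confirm that the sums over zeros are genuinely finite (so that ``integer-valued'' is meaningful) and that the uniform lower bound $|f_t|\ge|f|-|g|$ holds on all of $[0,1]\times\gamma(I)$. A slightly more self-contained alternative replaces the homotopy by writing $(f+g)'/(f+g)-f'/f = w'/w$ with $w=1+g/f$; since $|g/f|<1$ on $\gamma(I)$, the curve $w\circ\gamma$ lies in the disk $\{\,|\zeta-1|<1\,\}$, which omits the origin and is simply connected, so $\frac{1}{2\pi i}\int_\gamma w'/w\,dz=0$ and $N_0=N_1$ follows at once.
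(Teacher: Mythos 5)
Your proof is correct, but there is nothing in the paper to compare it against: the paper states this result as a recalled classical theorem, citing Dieudonn\'e's \emph{Foundations of Modern Analysis} (reference \cite{7}, p.~247), and supplies no proof of its own. Your argument is the standard one --- the nonvanishing of $f+g$ on $\gamma(I)$ from the strict inequality, reduction of both sides of \eqref{2.1} to the contour integrals $\frac{1}{2\pi i}\int_\gamma h'/h\,dz$ via the winding-number form of the argument principle, and then $N_0=N_1$ either by the homotopy $f_t=f+tg$ (continuous, integer-valued, hence constant) or, more cleanly, by observing that $w=1+g/f$ maps $\gamma$ into the disk $\{|\zeta-1|<1\}$ so that $\int_\gamma w'/w\,dz=0$. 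The only point requiring genuine care is the one you flagged: the sums in \eqref{2.1} are a priori over an at most denumerable set of zeros, and one must check that only finitely many terms are nonzero. Your justification works because $\gamma$ is null-homologous in the simply connected domain $A$, so $j(w;\gamma)=0$ for every $w\notin A$; the set where the index is nonzero is then a bounded set whose closure, together with $\gamma(I)$, is a compact subset of $A$, and the zeros of an analytic function ($\not\equiv 0$, as forced by $|g|<|f|$) in such a compact set are finite in number. With that observation made explicit, the proof is complete and matches the form of the statement, including the multiplicity-weighted index sums.
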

 
 As a consequence of the theorem, if $\gamma$ is a simple closed circuit that encircles $T$, then 
 $j(a,\gamma)=1$ for all $a$ inside $\gamma(I)$ and Rouché Theorem claims that the total zeros 
 (counting multiplicities) of $f$ and $f+g$ inside the circuit $\gamma$ are the same.

 
%
%
%

 
 \section{Main Results}\label{Main Results}

 We consider traveling waves for a non classical reaction diffusion equation 
 
\begin{equation}\label{Nonclassical}
	\frac{\partial u(x,t)}{\partial t}
	= D \frac{\partial^2 u(x,t)}{\partial x^2}
	+ \alpha \frac{\partial^3 u(x,t)}{\partial x^2 \partial t}
	+ f\big(u_t(x)\big).
\end{equation}
where $D>0$, $\alpha\in\mathbb R$   and
$f:C([-\tau,0],\mathbb R)\to\mathbb R$ is continuous and for any fixed $x \in \mathbb{R}$, $u_t(x) \in C([-\tau, 0], \mathbb{R}^n)$ is defined by  
$
u_t(x)(\theta) = u(t+\theta, x), \quad \theta \in [-\tau, 0].
$

 We are interested in traveling wave solutions of the form $u(x,t) = \phi(x+ct)$, $c>0$. 
 This transformation gives
 $$
 u(x,t) = \phi(x+ct),
\qquad
 \frac{\partial u(x,t)}{\partial t} = c\phi'(x+ct),
 $$
 $$
 \frac{\partial^2 u(x,t)}{\partial x^2} = \phi''(x+ct).
\qquad
 	 \frac{\partial^3 u(x, t )}{\partial x^2\partial t} = c\phi'''(x+ct).
 $$
 
 Setting $\xi = x+ct, r = c\tau$, Eq.\eqref{Nonclassical} becomes the following ordinary differential equation
 \begin{equation} \label{5.2}
 	\alpha c\,\phi'''(\xi) + D\,\phi''(\xi) -c\,\phi'(\xi)  +   f_c(\phi_\xi) = 0,
 \end{equation}
 where $f_c \in \mathcal{X}_c$, $ \mathcal{X}_c := C([-\tau,0], \mathbb{R}) \to \mathbb{R}$, defined as
 $$
 f_c(\psi) = f(\psi^c), \quad \psi^c(\theta) := \psi(c\theta), \quad \theta \in [-\tau,0].
 $$
 
 By a shift of variable we can reduce the equation to
 \begin{equation}\label{5.3}
 		\alpha c\,\phi'''(\xi) + D\,\phi''(\xi) -c\,\phi'(\xi)    + f_c(\phi_{\xi}) = 0,
 \end{equation}
 
 The main purpose of this section is to look for solutions $\phi$ of \eqref{5.3} in the following subset of $C(\mathbb{R},\mathbb{R})$
 $$
\Gamma := \Big\{\varphi \in C(\mathbb{R},\mathbb{R}) : \varphi \ \text{is nondecreasing},\ 
\lim_{\xi \to -\infty}\varphi(\xi) = \hat 0,\ 
\lim_{\xi \to +\infty}\varphi(\xi) = \hat K \Big\}.
$$

Taking $\beta>0$, from \eqref{5.3} we can rewrite it as
\begin{equation}\label{wave1}
	\alpha c\,\phi'''(\xi) + D\,\phi''(\xi) - c\,\phi'(\xi) - \beta \phi(\xi) + H(\phi)(\xi) = 0,
\end{equation}
where
$$
H(\phi)(\xi) := \beta \phi(\xi) + f_c(\phi_\xi), \qquad \phi \in C(\mathbb{R},\mathbb{R}).
$$
As proved in \cite{wuzou}, the operator $H$ enjoys similar properties:
\begin{lemma} 
	Assume that \eqref{5.4}. Then, for any $\phi \in \Gamma$, we have that
	\begin{itemize}
		\item[(i)] $ 0 \leq  H(\phi)(\xi) \leq \beta K$, $\xi \in \mathbb{R}$,
		\item[(ii)] $H(\phi)(\xi)$ is nondecreasing in $\xi \in \mathbb{R}$,
		\item[(iii)] $H(\psi)(\xi) \leq H(\phi)(\xi)$ for all $\xi \in \mathbb{R}$, if $\psi \in C(\mathbb{R},\mathbb{R})$ is given so that $0 \leq \psi(\xi) \leq \phi(\xi) \leq K$ for all $\xi \in \mathbb{R}$.
	\end{itemize}
\end{lemma}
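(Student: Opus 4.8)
The plan is to derive all three assertions from the single structural hypothesis \eqref{5.4}, which I read as the \emph{quasimonotonicity} condition on the (shifted) reaction term: there is a constant $\beta>0$ such that
\[
f_c(\varphi) - f_c(\psi) \ge -\beta\,[\varphi(0)-\psi(0)]
\]
for every pair of history segments $\psi,\varphi \in C([-\tau,0],\mathbb{R})$ with $0 \le \psi(\theta) \le \varphi(\theta) \le K$ on $[-\tau,0]$, together with the compatibility conditions $f_c(\hat 0)=f_c(\hat K)=0$ (so that $0$ and $K$ are equilibria of the profile equation). The role of the term $\beta\phi(\xi)$ introduced in passing from \eqref{5.3} to \eqref{wave1} is precisely to absorb the defect $-\beta[\varphi(0)-\psi(0)]$, converting the merely quasimonotone $f_c$ into the genuinely monotone operator $H$. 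Throughout, $\phi_\xi$ denotes the history segment $\phi_\xi(\theta)=\phi(\xi+\theta)$, $\theta\in[-\tau,0]$, and the essential bookkeeping step is to translate the pointwise order $\psi\le\phi$ on $\mathbb{R}$ into the pointwise order $\psi_\xi\le\phi_\xi$ on $[-\tau,0]$ for each fixed $\xi$.

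I would prove (iii) first, since it is the cleanest expression of \eqref{5.4}. Given $0\le\psi(\xi)\le\phi(\xi)\le K$ for all $\xi$, fix $\xi$ and note that $\psi_\xi\le\phi_\xi$ as elements of $C([-\tau,0],\mathbb{R})$ taking values in $[0,K]$. Applying \eqref{5.4} to this pair gives $f_c(\phi_\xi)-f_c(\psi_\xi)\ge-\beta[\phi(\xi)-\psi(\xi)]$, whence
\[
H(\phi)(\xi)-H(\psi)(\xi) = \beta[\phi(\xi)-\psi(\xi)] + f_c(\phi_\xi)-f_c(\psi_\xi) \ge 0,
\]
which is exactly (iii).

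For (ii), I would exploit that $\phi\in\Gamma$ is nondecreasing, so for $\xi_1\le\xi_2$ the shifted segments satisfy $\phi_{\xi_1}(\theta)=\phi(\xi_1+\theta)\le\phi(\xi_2+\theta)=\phi_{\xi_2}(\theta)$ for every $\theta$, i.e.\ $\phi_{\xi_1}\le\phi_{\xi_2}$ with values in $[0,K]$. Applying \eqref{5.4} to the pair $(\phi_{\xi_1},\phi_{\xi_2})$ and rearranging yields $H(\phi)(\xi_2)-H(\phi)(\xi_1)=\beta[\phi(\xi_2)-\phi(\xi_1)]+f_c(\phi_{\xi_2})-f_c(\phi_{\xi_1})\ge 0$, so $H(\phi)$ is nondecreasing. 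For (i) I would compare $\phi_\xi$ against the constant segments $\hat 0$ and $\hat K$. Using $0\le\hat 0\le\phi_\xi$ with $f_c(\hat 0)=0$ gives $f_c(\phi_\xi)\ge-\beta\phi(\xi)$, hence $H(\phi)(\xi)\ge 0$; using $\phi_\xi\le\hat K$ with $f_c(\hat K)=0$ gives $f_c(\phi_\xi)\le\beta[K-\phi(\xi)]$, hence $H(\phi)(\xi)\le\beta K$. This establishes the two-sided bound in (i).

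I do not expect a genuine analytic obstacle here; the content is entirely algebraic once \eqref{5.4} is in hand. The only points requiring care are notational: correctly passing between the order on $BC(\mathbb{R},\mathbb{R})$ and the induced order on history segments in $C([-\tau,0],\mathbb{R})$, and verifying that the comparison with the constant equilibria $\hat 0,\hat K$ is legitimate, namely that these constants lie in the admissible range of \eqref{5.4} and that $f_c$ vanishes there (which follows from $f(\hat 0)=f(\hat K)=0$ together with $\hat 0^{\,c}=\hat 0$ and $\hat K^{\,c}=\hat K$). Should \eqref{5.4} instead be stated as a one-sided Lipschitz bound on $f$ directly rather than on $f_c$, the same three steps go through verbatim after the substitution $f_c(\psi)=f(\psi^c)$.
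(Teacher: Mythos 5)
Your proof is correct, and the paper itself supplies no argument for this lemma---it simply defers to \cite{wuzou}---so your computation fills in exactly the standard verification that reference provides: part (iii) is \eqref{5.4} applied to the ordered pair of histories $\psi_\xi\le\phi_\xi$, part (ii) is \eqref{5.4} applied to $\phi_{\xi_1}\le\phi_{\xi_2}$ using the monotonicity of $\phi\in\Gamma$, and part (i) is the comparison with the constant segments $\hat 0$ and $\hat K$. Two points are worth recording. First, as printed, \eqref{5.4} orders the histories as $0\le\phi(s)\le\psi(s)\le K$ while asserting $f_c(\phi)-f_c(\psi)+\beta(\phi(0)-\psi(0))\ge 0$; taken literally this would make $H$ order-reversing and contradict part (iii), so your reading of \eqref{5.4} as the quasimonotonicity condition for pairs with $\psi\le\phi$ is the intended one and the only one consistent with the rest of the paper. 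Second, part (i) does not follow from \eqref{5.4} alone: you correctly flag that it also needs $f(\hat 0)=f(\hat K)=0$ from (C1), together with the observation that $\hat u^{c}=\hat u$ so that $f_c$ vanishes at the constant equilibria as well; the lemma's hypothesis should therefore be read as \eqref{5.4} together with (C1).
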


We impose the following conditions on $f:\, C([-\tau,0],\mathbb{R})\to\mathbb{R}$:
\begin{itemize}
	\item[(C1)]  
	There exists a constant $K>0$ such that
	\[
	f(\hat{0}) = f(\hat{K}) = 0, \qquad 
	f(\hat{u}) > 0 \ \text{for } \hat{u}\in(0,\hat{K}),
	\]
	where $\hat{u}$ denotes the constant history function $\hat{u}(s)\equiv u$ on $[-\tau,0]$.
	
	\item[(C2)]  
	There exists a constant $L>0$ such that
	\[
	|f(\phi) - f(\psi)| \le L \|\phi - \psi\|_\infty,
	\quad \forall\, \phi,\psi\in C([-\tau,0],\mathbb{R}).
	\]
	Moreover, there exists $\beta>0$ such that
	\begin{equation}\label{5.4}
		f_c(\phi) - f_c(\psi) + \beta(\phi(0) - \psi(0)) \ge 0,
	\end{equation}
	for all $\phi,\psi \in C([-\tau,0],\mathbb{R})$ satisfying 
	$0 \le \phi(s) \le \psi(s) \le K$ for all $s\in[-\tau,0]$.
	
	\item[(C3)]  
	The operator $H(\phi):=f_c(\phi)+\beta\phi$ is continuous from 
	$BC(\mathbb{R},[0,K])$ into $BC(\mathbb{R},\mathbb{R})$, and 
	\[
	\sup_{\phi\in\Gamma}\|H(\phi)\|_\infty < \infty.
	\]
\end{itemize}

 \begin{remark}
 	The condition (C1) guarantees there are only two roots or steady states for $f(u).$ In fact, for standard logistic response $\hat{K}=\hat{1}.$ 
 \end{remark}

For convenience, let $\alpha=1$,  
we consider the characteristic function associated with the linearized profile operator at delay $r$,
\begin{equation} \label{dt}
\Delta (\lambda)=c\lambda^{3}+D\lambda^{2}-c\lambda-\beta 
  \qquad \lambda\in\mathbb{C}.
\end{equation}

Throughout this section we fix $c>0$, $D>0$, and $\beta>0$. 
 \begin{proposition} \label{r0}
	For $c>0$, $D>0$, and $\beta>0$, one has $\Delta(i\omega)\neq 0$ for all $\omega\in\mathbb{R}$.
\end{proposition}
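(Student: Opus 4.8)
My plan is to substitute $\lambda = i\omega$ directly into the characteristic function \eqref{dt} and split the result into its real and imaginary parts. Using $(i\omega)^2 = -\omega^2$ and $(i\omega)^3 = -i\omega^3$, the substitution yields
\begin{equation*}
\Delta(i\omega) = -(D\omega^2 + \beta) - i\,c\omega(\omega^2 + 1),
\end{equation*}
so that $\Re\big(\Delta(i\omega)\big) = -(D\omega^2 + \beta)$ and $\Im\big(\Delta(i\omega)\big) = -c\omega(\omega^2+1)$.

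The heart of the argument is then a single sign observation: the real part alone already excludes a zero. Since $D>0$, $\beta>0$, and $\omega^2\ge 0$ for every real $\omega$, I would note that
\begin{equation*}
\Re\big(\Delta(i\omega)\big) = -(D\omega^2+\beta) \le -\beta < 0.
\end{equation*}
A complex number whose real part is strictly negative cannot vanish, and this immediately gives $\Delta(i\omega)\neq 0$ for all $\omega\in\mathbb{R}$, which is the claim.

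I do not anticipate any genuine obstacle here; the proposition reduces to a direct computation followed by an inspection of signs. The only point demanding a little care is the bookkeeping of the powers of $i$ when separating real and imaginary parts. It is worth remarking that the hypotheses are used economically: $\beta>0$ by itself rules out the frequency $\omega=0$, while $D>0$ keeps the real part bounded away from zero for all nonzero frequencies. The imaginary part is never needed for the conclusion, so the standing assumption $c>0$ is in fact irrelevant to this particular statement and enters only elsewhere in the construction.
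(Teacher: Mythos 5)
Your proposal is correct and follows essentially the same argument as the paper: substitute $\lambda=i\omega$, observe that $\Re\,\Delta(i\omega)=-D\omega^2-\beta<0$, and conclude. The added remark that $c>0$ is not needed for this particular statement is accurate but does not change the approach.
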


\begin{proof}
	Substituting $\lambda=i\omega$ into \eqref{dt} gives
	$$
	\Delta(i\omega)
	= c(i\omega)^3 + D(i\omega)^2 - c(i\omega) - \beta
	= \big(-D\omega^2 - \beta\big) \;+\; i\big(-c\omega^3 - c\omega\big).
	$$
	Hence $\Re \Delta(i\omega) = -D\omega^2-\beta < 0$ for every $\omega\in\mathbb{R}$, and in particular cannot vanish.
	Therefore $\Delta(i\omega)\neq 0$ for all $\omega\in\mathbb{R}$.
\end{proof}

\begin{proposition}\label{prop:rootcount}
	Let $c>0$, $D>0$, and $\beta>0$. Then:
	\begin{enumerate}
		\item[(i)] $\Delta$ has exactly one positive real root $\lambda_+>0$.
		\item[(ii)] The remaining two roots lie in the open left half-plane: either both are real and negative, or they form a complex conjugate pair with negative real part.
		\item[(iii)] Consequently, there exists $\mu_0>0$ such that every root $\lambda$ of $\Delta$ satisfies $|\Re\lambda|\ge \mu_0$, i.e.\ $\Delta$ is hyperbolic.
	\end{enumerate}
\end{proposition}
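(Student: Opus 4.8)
The plan is to treat $\Delta$ as a genuine real cubic, extract the positive root by elementary sign analysis, factor it out to locate the remaining two roots, and finally combine this with Proposition~\ref{r0} to obtain hyperbolicity.

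For part (i), I would first note that $\Delta$ restricted to the real axis is a cubic with positive leading coefficient $c>0$, so $\Delta(\lambda)\to+\infty$ as $\lambda\to+\infty$, whereas $\Delta(0)=-\beta<0$; the intermediate value theorem then furnishes at least one positive real root. To upgrade this to \emph{exactly} one, I would invoke Descartes' rule of signs: the coefficient sequence $(c,\,D,\,-c,\,-\beta)$ has signs $(+,+,-,-)$, exhibiting a single sign change, which forces precisely one positive real root $\lambda_+$. (As an alternative that avoids citing Descartes, one checks that $\Delta'(\lambda)=3c\lambda^2+2D\lambda-c$ has product of critical points $-1/3<0$, hence exactly one positive critical point $\lambda_*$, and that $\Delta(\lambda_*)<\Delta(0)<0$, so the graph crosses zero only once on $(0,\infty)$.)

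For part (ii), with $\lambda_+$ isolated I would factor $\Delta(\lambda)=c(\lambda-\lambda_+)(\lambda^2+p\lambda+q)$ and match coefficients, obtaining $p=\lambda_++D/c$ and $q=\beta/(c\lambda_+)$. Since $c,D,\beta>0$ and $\lambda_+>0$, both $p>0$ and $q>0$. The two remaining roots are exactly the roots of $\lambda^2+p\lambda+q$, whose sum is $-p<0$ and whose product is $q>0$. If these roots are real they must both be negative (negative sum, positive product); if they form a complex conjugate pair their common real part is $-p/2<0$. Either way they lie in the open left half-plane. (The same conclusion follows directly from Vieta applied to the original cubic: the two non-$\lambda_+$ roots have product $\beta/(c\lambda_+)>0$ and sum $-D/c-\lambda_+<0$.)

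For part (iii), I would simply assemble the preceding facts. We now know $\Delta$ has exactly three roots counted with multiplicity, namely $\lambda_+>0$ together with two roots of strictly negative real part, and Proposition~\ref{r0} ensures none of them lies on the imaginary axis. As the root set is finite and every root satisfies $\Re\lambda\neq 0$, the quantity $\mu_0:=\min\{\,|\Re\lambda| : \Delta(\lambda)=0\,\}$ is a minimum of finitely many strictly positive numbers, hence $\mu_0>0$, and by construction $|\Re\lambda|\ge\mu_0$ for every root. This is exactly the asserted hyperbolicity. I do not expect a genuine obstacle; the only step warranting mild care is the uniqueness claim in (i), where I would favor Descartes' rule of signs to sidestep an explicit case analysis of the critical points.
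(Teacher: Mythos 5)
Your proposal is correct and follows essentially the same route as the paper: Descartes' rule of signs (plus the sign change of $\Delta$ on $(0,\infty)$) for (i), Vieta-type sign information on the sum and product of the remaining two roots for (ii), and a finite minimum of the nonzero real parts for (iii). If anything, your version is slightly more careful in the real-root subcase of (ii), where you explicitly use the positive product $\beta/(c\lambda_+)>0$ to rule out one positive and one negative root, a point the paper's proof leaves implicit.
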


\begin{proof}
	(i) By Descartes' rule of signs applied to $\Delta(\lambda)=c\lambda^3+D\lambda^2-c\lambda-\beta$ (whose coefficients have the sign pattern $+,+,-,-$), the number of positive real roots counted with multiplicity is exactly one. Since $\Delta(0)=-\beta<0$ and $\Delta(\lambda)\to+\infty$ as $\lambda\to+\infty$, there exists a unique $\lambda_+>0$ with $\Delta(\lambda_+)=0$.
	
	(ii) Divide \eqref{dt} by $c$ to obtain the monic cubic
	$$
	\lambda^3 + \frac{D}{c}\lambda^2 - \lambda - \frac{\beta}{c}=0.
	$$
	Let $\{\rho_1,\rho_2,\rho_3\}$ be its roots (so $\{\rho_j\}$ are the roots of $\Delta$ as well). Vieta's formulas yield
	$$
	\rho_1+\rho_2+\rho_3 = -\frac{D}{c} < 0, 
	\qquad 
	\rho_1\rho_2\rho_3 = \frac{\beta}{c} > 0 .
	$$
	From (1) one root, say $\rho_1=\lambda_+>0$, is positive and real. Hence
	$$
	\rho_2+\rho_3 = -\frac{D}{c} - \lambda_+ < 0.
	$$
	If $\rho_2,\rho_3\in\mathbb{R}$ then both must be negative; if they are a complex conjugate pair, write $\rho_{2,3}=a\pm i b$ with $a=\frac{1}{2}(\rho_2+\rho_3)=\tfrac12(-\frac{D}{c}-\lambda_+)<0$, showing $\Re\rho_{2,3}<0$.
	
	(iii) Proposition~\ref{r0} ensures that no root lies on the imaginary axis. Together with (1)–(2), the set of roots is separated from $i\mathbb{R}$ by a positive distance. Define
	$$
	\mu_0 := \min\big\{\lambda_+,\, -\Re\rho_2,\, -\Re\rho_3\big\}>0.
	$$
	Then every root $\lambda$ satisfies either $\Re\lambda\ge \lambda_+\ge \mu_0$ or $\Re\lambda\le -\mu_0$, establishing hyperbolicity and a spectral gap of width at least $\mu_0$ about the imaginary axis.
\end{proof}

 Let us consider the linear operator 
 $\mathcal{L}$ in $BC(\mathbb{R}, \mathbb{R})$ defined as
 \begin{equation}\label{5.5}
 	\mathcal{L}(\phi)(\xi) :=  	\alpha c\,\phi'''(\xi) + D\,\phi''(\xi) -c\,\phi'(\xi) - \beta\,\phi(\xi)     = f(\xi), 
 \end{equation}
 where $f \in BC(\mathbb{R}, \mathbb{R})$, and

 By Propositions~\ref{r0} and \ref{prop:rootcount}, and by a standard Rouché-continuation argument for the delayed characteristic function (see e.g.\ \cite{Diekmann,ma}), the spectrum of the linearization is separated from the imaginary axis; consequently, for any $0<\mu<\mu_0$ the operator
 \[
 \mathcal L:BC^3_\mu(\mathbb R)\to BC_\mu(\mathbb R)
 \]
 is bijective and admits a bounded inverse. In particular $-\mathcal L^{-1}$ is order-preserving provided its Green kernel satisfies $-G\ge0$ (cf.\ Theorem~4.1 of \cite{wuzou}).

 With this reformulation, Eq.~\eqref{5.3} can be written as
 \begin{equation}\label{5.7}
 	\phi = -\mathcal{L}^{-1} H(\phi), \qquad \phi \in \Gamma.
 \end{equation}
 Thus, any solution $\phi$ corresponds to a fixed point of the operator
 $-\mathcal{L}^{-1}H$ on $\Gamma$. In what follows, we establish conditions
 under which this operator is well defined on $\Gamma$, or on a suitable
 closed convex subset of $\Gamma$, and acts as a compact operator there.
 Once these properties are verified, the existence of a fixed point---and
 hence of a traveling wave solution---follows directly from the
 Schauder Fixed Point Theorem.
 
 \begin{definition} \label{df5.2}
 	A function $\phi \in BC^3(\mathbb{R},\mathbb{R})$ is called an upper solution 
 	(lower solution, respectively) for the wave equation (5.3) if it satisfies the following
  	 \begin{equation*} 
 		\alpha c\,\phi'''(\xi) + D\,\phi''(\xi) -c\,\phi'(\xi)    + f_c(\phi_{\xi}) \leq 0,
 	\end{equation*}
 	\[
 	\big(\alpha c\,\phi'''(\xi ) + D\,\phi''(\xi) -c\,\phi'(\xi)    + f_c(\phi_{\xi}) \geq 0, \ \text{respectively}\big)
 	\]
 	for all $t \in \mathbb{R}$.
 \end{definition}

 
 \begin{lemma}\label{lem:upper-lower-simple}
  	Let $F:=-\mathcal L^{-1}H$ and assume (C1)--(C3). 
 	Then $\phi\in BC^3(\mathbb R,[0,K])$ is an upper solution (resp.\ lower solution) 
 	of Eq.~\eqref{5.3} if and only if $F\phi \le \phi$ (resp.\ $F\phi \ge \phi$).
 \end{lemma}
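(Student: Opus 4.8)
The plan is to collapse the statement into a single sign condition and then transport it through the inverse operator. First I would record the purely algebraic observation that, by the definition $H(\phi)=\beta\phi+f_c(\phi_\xi)$, the two $\beta\phi$ contributions cancel, so the left-hand side of the profile equation \eqref{5.3} decomposes as
\[
\alpha c\,\phi'''(\xi)+D\,\phi''(\xi)-c\,\phi'(\xi)+f_c(\phi_\xi)=\mathcal L(\phi)(\xi)+H(\phi)(\xi).
\]
Consequently, by Definition~\ref{df5.2}, $\phi\in BC^3(\mathbb R,[0,K])$ is an upper solution exactly when $w:=\mathcal L(\phi)+H(\phi)\le 0$ (a lower solution when $w\ge 0$). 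This replaces the differential-inequality formulation by a sign condition on the single function $w\in BC(\mathbb R,\mathbb R)$.

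Next I would express $\phi-F\phi$ in terms of $w$. Since, by Propositions~\ref{r0}--\ref{prop:rootcount}, the operator $\mathcal L$ acts as a bijection with genuine two-sided inverse between the weighted spaces, we have $\phi=\mathcal L^{-1}\mathcal L\phi$, and therefore
\[
\phi-F\phi=\phi+\mathcal L^{-1}H(\phi)=\mathcal L^{-1}\big(\mathcal L(\phi)+H(\phi)\big)=\mathcal L^{-1}w=-\big(-\mathcal L^{-1}\big)w .
\]
This cancellation identity is the hinge of the whole argument: it converts the ordering $F\phi\le\phi$ into a sign statement about $-\mathcal L^{-1}w$.

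The forward implication is then immediate. If $\phi$ is an upper solution, then $w\le\hat 0$; applying the order-preserving map $-\mathcal L^{-1}$ (which fixes $\hat0$, since it is linear with nonnegative Green kernel $-G$) gives $-\mathcal L^{-1}w\le\hat0$, and the displayed identity yields $\phi-F\phi=-(-\mathcal L^{-1})w\ge\hat0$, i.e.\ $F\phi\le\phi$. The lower-solution case follows verbatim with every inequality reversed.

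The converse is where I expect the real obstacle. From $F\phi\le\phi$ the identity gives only $-\mathcal L^{-1}w\le\hat0$, and to conclude $w\le\hat0$ I need $-\mathcal L^{-1}$ to \emph{reflect} order, not merely preserve it. Bijectivity supplies injectivity, but that is not enough, since convolution against the nonnegative kernel $-G$ can turn a sign-changing $w$ into a nonpositive image. My plan to close this gap is to use the explicit structure of $G$ coming from the hyperbolic root splitting of Propositions~\ref{r0}--\ref{prop:rootcount} (one root $\lambda_+>0$, the remaining two with $\Re\rho_{2,3}<0$), which makes $G$ a decaying exponential combination on each half-line, and to argue that $-\mathcal L^{-1}$ is in fact an order isomorphism onto its range in the weighted space; only then does $-\mathcal L^{-1}w\le\hat0$ force $w\le\hat0$. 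I regard establishing this reflection property rigorously as the delicate step, whereas the forward direction and the cancellation bookkeeping are routine.
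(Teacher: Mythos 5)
Your forward direction is essentially the paper's own argument: rewrite the differential inequality as $\mathcal L\phi+H(\phi)\le 0$, note $\phi-F\phi=\mathcal L^{-1}\bigl(\mathcal L\phi+H(\phi)\bigr)$ via $\mathcal L^{-1}\mathcal L=\mathrm{Id}$ on the domain, and apply the order-preserving map $-\mathcal L^{-1}$. That half is correct and complete.

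The converse is where your proposal stops, and you have put your finger on exactly the right difficulty: from $F\phi\le\phi$ one only gets $-\mathcal L^{-1}w\le\hat 0$ for $w:=\mathcal L\phi+H(\phi)$, and to conclude $w\le\hat 0$ one needs $-\mathcal L^{-1}$ to \emph{reflect} order. As written your proof is therefore incomplete, and I do not think the repair you sketch can work: $-\mathcal L^{-1}$ is convolution against the strictly positive $L^1$ kernel $-G$, and convolution with a one-signed integrable kernel is injective but does not reflect order --- a $w$ that is positive on a small set and sufficiently negative nearby still has $(-G)*w\le 0$ everywhere, and the exponential form of $G$ on each half-line does not exclude this. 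You should also know that the paper's own proof of this direction is no more rigorous: it ``applies $\mathcal L$'' to the inequality $-\phi-\mathcal L^{-1}H(\phi)\le 0$ and asserts the sign is preserved, but $\mathcal L$ is a third-order differential operator and is certainly not order-preserving, so that step is unjustified as stated. The implication actually used in the subsequent monotone iteration (upper/lower solution implies $F\phi\le\phi$, resp.\ $F\phi\ge\phi$) is the one both you and the paper establish correctly; the reverse implication would need either to be dropped from the statement or proved by a genuinely different argument (for instance, a maximum-principle or uniqueness argument for $\mathcal L$ rather than positivity of the kernel alone).
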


 \begin{proof}
 	We prove the upper–solution case; the lower case is identical with inequalities reversed.
 	
 	Assume $\mathcal L\phi+H(\phi)\le0$. Applying the bounded, order--preserving map $-\mathcal L^{-1}$ to both sides yields
 	\[
 	(-\mathcal L^{-1})(\mathcal L\phi + H(\phi)) \le 0.
 	\]
 	Since $-\mathcal L^{-1}\mathcal L = -\mathrm{Id}$ on $\mathrm{Dom}(\mathcal L)$,
 	this is equivalent to
 	\[
 	-\phi - \mathcal L^{-1}H(\phi) \le 0,
 	\]
 	hence $\phi \ge -\mathcal L^{-1}H(\phi)=F\phi$.
 	
 	Conversely, if $F\phi\le\phi$ then $-\mathcal L^{-1}H(\phi)\le\phi$, so
 	$-\phi-\mathcal L^{-1}H(\phi)\le0$. Applying $\mathcal L$ (valid on
 	$\mathrm{Dom}(\mathcal L)$) and using linearity gives $\mathcal L\phi+H(\phi)\le0$,
 	i.e.\ $\phi$ is an upper solution.  
 \end{proof}


The traveling-wave solution to
\begin{equation} \label{s1}
	c\,\phi'''(\xi) + D\,\phi''(\xi) - c\,\phi'(\xi) - \beta\,\phi(\xi)
	= -H\bigl(\phi(\xi)\bigr)
\end{equation}
is given by
\[
\phi(\xi)=-\int_{-\infty}^{\infty} G(\xi - y)\,H\bigl(\phi(y)\bigr)\,dy,
\]
where $G$ denotes the Green's function of the operator $\mathcal L$. 
That is, $G$ is the fundamental solution satisfying 
\[
\mathcal L G = \delta,
\]
with $\delta$ the Dirac distribution at the origin. 
Equivalently, $G$ is uniquely determined by continuity of $G$ and $G'$, and a jump condition in $G''$ at $\xi=0$ corresponding to the unit point source. 
Hence $G$ is characterized by the following conditions:
 \begin{enumerate}
 	\item[{(M1)}] 
 	$$
 	\lim_{\xi \to 0^-} G(\xi) = \lim_{\xi \to 0^+} G(\xi)  \quad \text{(continuity of } G),
 	$$
 	
 	\item[{(M2)}] 
 	$$
 	\lim_{\xi \to 0^-} G'(\xi) = \lim_{\xi \to 0^+} G'(\xi) \quad \text{(continuity of the first derivative),}
 	$$
 	
 	\item[{(M3)}] 
 	$$
 	\lim_{\xi \to 0^+} G''(\xi) - \lim_{\xi \to 0^-} G''(\xi) = \frac{1}{c} \quad \text{(jump of the second derivative induced by } \mathcal L G=\delta). 
 	$$
 \end{enumerate}

  \begin{lemma}\label{Green}
  	Let $G(\xi)$ be the Green's function of $\mathcal L$ satisfying the jump/continuity
  	conditions \textup{(M1)--(M3)}.
  	Assume that the characteristic polynomial
  	\[
  	\Delta(\lambda)=c\lambda^3 + D\lambda^2 - c\lambda - \beta
  	\]
  	has three distinct real roots $\lambda_1<\lambda_2<0<\lambda_3$ and that $c>0$.
  	Then $G$ admits the representation
  	\[
  	G(\xi)=
  	\begin{cases}
  		A_3 e^{\lambda_3\xi}, & \xi<0,\\[4pt]
  		- \bigl(A_1 e^{\lambda_1\xi}+A_2 e^{\lambda_2\xi}\bigr), & \xi>0,
  	\end{cases}
  	\]
  	where $A_1,A_2,A_3$ are determined by (M1)--(M3). Moreover,
  	\[
  	G(\xi)<0 \quad \text{for all }\xi\in\mathbb{R}.
  	\]
  \end{lemma}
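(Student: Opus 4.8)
The plan is to construct $G$ from a piecewise exponential ansatz, fix its three constants from the matching data (M1)--(M3), and then read the sign off the resulting closed-form coefficients. First I would note that on each half-line $\mathcal L G=0$, so there $G$ is a linear combination of the fundamental solutions $e^{\lambda_1\xi},e^{\lambda_2\xi},e^{\lambda_3\xi}$ of the homogeneous equation $c\,G'''+D\,G''-c\,G'-\beta G=0$. The decisive structural input is decay: since $G$ must yield a convergent convolution $\int G(\xi-y)H(\phi(y))\,dy$ against bounded data, it has to be integrable and hence vanish at both ends (this is exactly where the hyperbolicity of $\Delta$ from Proposition~\ref{prop:rootcount} enters). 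On $\xi<0$ only the mode $e^{\lambda_3\xi}$ with $\lambda_3>0$ decays as $\xi\to-\infty$, the two negative-root modes blowing up; on $\xi>0$ the roles reverse and only $e^{\lambda_1\xi},e^{\lambda_2\xi}$ survive. Discarding the growing modes produces precisely the stated piecewise form, the minus sign on the right being a bookkeeping convention.

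Next I would pin down $A_1,A_2,A_3$ by imposing continuity of $G$ (M1), continuity of $G'$ (M2), and the unit jump $1/c$ in $G''$ (M3). These give the linear system
\begin{equation*}
A_1+A_2+A_3=0,\qquad \lambda_1 A_1+\lambda_2 A_2+\lambda_3 A_3=0,\qquad \lambda_1^2 A_1+\lambda_2^2 A_2+\lambda_3^2 A_3=-\tfrac{1}{c}.
\end{equation*}
Its coefficient matrix is the Vandermonde matrix of the roots, whose determinant $(\lambda_2-\lambda_1)(\lambda_3-\lambda_1)(\lambda_3-\lambda_2)$ is nonzero because the roots are distinct; hence the constants are uniquely determined. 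Solving by Cramer's rule gives the closed forms
\begin{equation*}
A_1=\frac{-1}{c(\lambda_2-\lambda_1)(\lambda_3-\lambda_1)},\qquad A_2=\frac{1}{c(\lambda_2-\lambda_1)(\lambda_3-\lambda_2)},\qquad A_3=\frac{-1}{c(\lambda_3-\lambda_1)(\lambda_3-\lambda_2)}.
\end{equation*}
With $\lambda_1<\lambda_2<0<\lambda_3$ and $c>0$, every denominator factor above is positive, so I can read off $A_1<0$, $A_2>0$, and $A_3<0$.

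Finally I would establish $G<0$ on each half-line. For $\xi<0$ this is immediate, since $G(\xi)=A_3 e^{\lambda_3\xi}<0$. For $\xi>0$ the two surviving modes carry opposite signs, and this is the step I expect to be the main obstacle, because naive cancellation between them could in principle spoil a definite sign. Substituting the explicit constants rewrites $G$ on $\xi>0$ as
\begin{equation*}
G(\xi)=\frac{1}{c(\lambda_2-\lambda_1)}\left(\frac{e^{\lambda_1\xi}}{\lambda_3-\lambda_1}-\frac{e^{\lambda_2\xi}}{\lambda_3-\lambda_2}\right),
\end{equation*}
so, the prefactor being positive, it suffices to prove the bracket is negative for $\xi>0$. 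I would settle this by a two-factor monotonicity comparison between positive quantities: from $\lambda_1<\lambda_2$ one gets $e^{\lambda_1\xi}<e^{\lambda_2\xi}$ for $\xi>0$, while from $\lambda_3-\lambda_1>\lambda_3-\lambda_2>0$ one gets $0<\frac{1}{\lambda_3-\lambda_1}<\frac{1}{\lambda_3-\lambda_2}$; multiplying these strict inequalities yields $\frac{e^{\lambda_1\xi}}{\lambda_3-\lambda_1}<\frac{e^{\lambda_2\xi}}{\lambda_3-\lambda_2}$, whence $G(\xi)<0$. The boundary case $\xi=0$ is covered by continuity, which forces $G(0)=A_3<0$ and simultaneously serves as a consistency check that the two branches agree at the origin.
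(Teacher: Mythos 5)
Your proof is correct and follows essentially the same route as the paper: solve the matching conditions (M1)--(M3) for $A_1,A_2,A_3$ (your closed forms agree with the paper's after flipping the signs of the denominator factors), read off $A_1<0$, $A_2>0$, $A_3<0$, and conclude $G<0$ on each half-line. The only cosmetic difference is on $\xi>0$, where the paper factors $G(\xi)=-e^{\lambda_2\xi}T(\xi)$ and argues that $T$ is increasing with $T(0)=-A_3>0$, whereas you compare the two exponential terms directly via the inequality $\frac{e^{\lambda_1\xi}}{\lambda_3-\lambda_1}<\frac{e^{\lambda_2\xi}}{\lambda_3-\lambda_2}$; both arguments are equally elementary and your added justification of the piecewise ansatz via decay is a welcome detail the paper leaves implicit.
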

  
  \begin{proof}
  	Solving the matching conditions (M1)--(M3) yields
  	\[
  	A_1=\frac{-1}{c(\lambda_1-\lambda_2)(\lambda_1-\lambda_3)},\qquad
  	A_2=\frac{-1}{c(\lambda_2-\lambda_1)(\lambda_2-\lambda_3)},\qquad
  	A_3=\frac{-1}{c(\lambda_3-\lambda_1)(\lambda_3-\lambda_2)}.
  	\]
  	Since $c>0$ and $\lambda_1<\lambda_2<0<\lambda_3$, it follows that
  	$A_1<0$, $A_2>0$, and $A_3<0$.
  	
  	For $\xi<0$, we have $G(\xi)=A_3 e^{\lambda_3\xi}<0$ because $A_3<0$ and $e^{\lambda_3\xi}>0$.
 On the other hand,  
    	for $\xi>0$, 
  	\begin{align*}
G(\xi) &\, = - \bigl(A_1 e^{\lambda_1\xi}+A_2 e^{\lambda_2\xi}\bigr)\\
  		&\, =  -e^{\lambda_2\xi}[A_1 e^{(\lambda_1-\lambda_2)\xi}+A_2]\\
  		& \, =-e^{\lambda_2\xi}T(\xi),		
  	\end{align*}
  	where $T(\xi):=A_1 e^{(\lambda_1-\lambda_2)\xi}+A_2$.
  	We have $T'(\xi)=A_1(\lambda_1-\lambda_2)e^{(\lambda_1-\lambda_2)\xi}>0$, so $T$ is increasing.
  	The continuity of $G$ at $\xi=0$ gives $A_3=-(A_1+A_2)$, hence
  	$T(0)=A_1+A_2=-A_3>0$ and therefore $T(\xi)>0$ for all $\xi>0$.
  	Thus $G(\xi)=-e^{\lambda_2\xi}T(\xi)<0$.
  	
  	Consequently, $G(\xi)<0$ for all $\xi\in\mathbb{R}$.
  \end{proof}

 \subsection{Monotone Iteration method}
  
 Let
 \[
 \Gamma := \Big\{ \phi \in BC_{[0,1]}(\mathbb{R}) :
 \ \phi\ \text{nondecreasing},\ 
 \lim_{\xi\to-\infty}\phi(\xi)=0,\ 
 \lim_{\xi\to+\infty}\phi(\xi)=1\Big\},
 \]
 and fix $0<\mu<\mu_0$. Denote
 \[
 B_\mu(\mathbb R, \mathbb R)=\{\phi\in C(\mathbb R):\|\phi\|_\mu:=\sup_{\xi\in\mathbb R} e^{-\mu|\xi|}|\phi(\xi)|<\infty\}.
 \]
 Set $F(\phi):=-\mathcal L^{-1}H(\phi)=-G*H(\phi)$, where $G$ is the Green kernel from Lemma~\ref{Green}.
 \begin{lemma}\label{Banach}
 	With the above notation:
 	\begin{enumerate}
 		\item $\Gamma\neq\varnothing$.
 		\item $\Gamma$ is closed, convex and bounded in $B_\mu(\mathbb R, \mathbb R)$.
 		\item $\Gamma\subset B_\mu(\mathbb R, \mathbb R)$.
 		\item $(B_\mu(\mathbb R, \mathbb R),\|\cdot\|_\mu)$ is a Banach space.
 	\end{enumerate}
 \end{lemma}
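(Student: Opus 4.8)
The plan is to dispatch the four assertions in order of increasing difficulty, treating completeness (4) as the structural backbone and closedness in (2) as the genuine obstacle. First I would prove (4) by an isometry argument. The map $T\colon B_\mu(\mathbb R,\mathbb R)\to BC(\mathbb R,\mathbb R)$ defined by $(T\phi)(\xi)=e^{-\mu|\xi|}\phi(\xi)$ is linear, and since $e^{-\mu|\xi|}$ is continuous and strictly positive, $T$ is a bijection with inverse $(T^{-1}g)(\xi)=e^{\mu|\xi|}g(\xi)$; moreover $\|T\phi\|_\infty=\|\phi\|_\mu$ by the very definition of $\|\cdot\|_\mu$. Hence $T$ is an isometric isomorphism onto $(BC(\mathbb R,\mathbb R),\|\cdot\|_\infty)$, which is complete, so $(B_\mu,\|\cdot\|_\mu)$ is complete as well. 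Along the way one verifies that $\|\cdot\|_\mu$ is a genuine norm; the only nonobvious point is definiteness, which holds because $e^{-\mu|\xi|}$ never vanishes.

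Next, (3) and the boundedness half of (2) are immediate: for $\phi\in\Gamma$ one has $\phi\in BC_{[0,1]}(\mathbb R)$, so $\|\phi\|_\mu=\sup_\xi e^{-\mu|\xi|}|\phi(\xi)|\le\|\phi\|_\infty\le1$, giving $\Gamma\subset B_\mu$ and $\sup_{\phi\in\Gamma}\|\phi\|_\mu\le1$. Nonemptiness (1) I would settle by exhibiting a single explicit profile, e.g.\ $\phi_0(\xi)=\tfrac12(1+\tanh\xi)$ (or any fixed sigmoid), which is continuous, nondecreasing, $[0,1]$-valued, with the required limits $0$ and $1$ at $\mp\infty$. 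Convexity in (2) is then routine: if $\phi_0,\phi_1\in\Gamma$ and $t\in[0,1]$, the combination $(1-t)\phi_0+t\phi_1$ is again continuous, nondecreasing and $[0,1]$-valued, with limits $(1-t)\cdot0+t\cdot0=0$ and $(1-t)\cdot1+t\cdot1=1$.

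For closedness I would take $\phi_n\in\Gamma$ with $\|\phi_n-\phi\|_\mu\to0$ and argue in two stages. The soft stage: on each compact $[-R,R]$ the weight is bounded below by $e^{-\mu R}$, so $\sup_{|\xi|\le R}|\phi_n(\xi)-\phi(\xi)|\le e^{\mu R}\|\phi_n-\phi\|_\mu\to0$; thus $\phi_n\to\phi$ locally uniformly, and passing to the pointwise limit preserves both monotonicity ($\phi_n(\xi_1)\le\phi_n(\xi_2)$ for $\xi_1\le\xi_2$ survives in the limit) and the range $[0,1]$. Being monotone and bounded, the limit $\phi$ then has one-sided limits $L_-\le L_+$ at $\mp\infty$ with $0\le L_-\le L_+\le1$.

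The hard part, which I expect to be the crux of the whole lemma, is recovering the exact boundary values $L_-=0$ and $L_+=1$. The weighted norm supplies no control at infinity: since $e^{-\mu|\xi|}\to0$, a sequence of genuine wavefronts whose transition region drifts off to $+\infty$, such as $\phi_n(\xi)=\phi_0(\xi-n)$, converges to the constant $0$ in $\|\cdot\|_\mu$, whose far limit is $0\ne1$. Thus soft analysis alone cannot preserve the tails, and the step requires supplementary structure. I would recover it in one of two ways: either by pinning the profiles through a normalization (for instance fixing $\phi(0)$ at an intermediate level), which prevents the front from escaping and, combined with the monotone bounds just obtained, forces $L_-=0$ and $L_+=1$; or by replacing $\Gamma$ in the fixed-point argument by the order interval between a fixed lower solution $\underline\phi$ and upper solution $\overline\phi$ carrying the prescribed tails, whose pinned endpoints pass to the locally uniform limit. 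Establishing closedness in precisely the form needed for Schauder's theorem is therefore where the real work lies; the remaining assertions are essentially formal.
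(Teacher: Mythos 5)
Your treatment of (1), (3), (4) and of convexity/boundedness in (2) matches what the paper intends; the paper's own ``proof'' is only the one-line sketch ``(i)--(iii) are immediate (e.g.\ $\phi(\xi)=(1+\tanh\xi)/2\in\Gamma$); (iv) is standard,'' so your isometry $T\phi=e^{-\mu|\cdot|}\phi$ onto $(BC(\mathbb R),\|\cdot\|_\infty)$ is simply the standard argument made explicit. The real value of your write-up is that you have correctly identified a point the paper silently skips: $\Gamma$ as defined is \emph{not} closed in $(B_\mu,\|\cdot\|_\mu)$. Your translation counterexample is valid --- for $\phi_n(\xi)=\phi_0(\xi-n)$ one checks $\|\phi_n\|_\mu\le\max\{\phi_0(-n/2),\,e^{-\mu n/2}\}\to0$, so $\phi_n\to\hat 0\notin\Gamma$ --- and the exponential weight genuinely destroys the tail conditions. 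The paper never repairs the lemma as stated; instead it quietly sidesteps the issue in Theorem~\ref{main1} by running Schauder on the order interval $\Gamma_0=\{\phi\in\Gamma:\underline\phi\le\phi\le\overline\phi\}$ with $\underline\phi,\overline\phi\in\Gamma$, which is exactly your second proposed fix: the pointwise pinning $\underline\phi\le\phi\le\overline\phi$ survives locally uniform (hence $\|\cdot\|_\mu$-) convergence and forces the limits $0$ and $1$ at $\mp\infty$. So your proposal is not merely an alternative route; it supplies the missing closedness argument and the correct formulation ($\Gamma_0$ rather than $\Gamma$) that the paper's later fixed-point arguments actually require. The one thing to flag is that your first fix (normalizing $\phi(0)$ at an intermediate level) would destroy convexity and invariance under $F$, so the order-interval route is the one to keep.
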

 \begin{proof}[Sketch]
 	(i)–(iii) are immediate (e.g.\ $\phi(\xi)=(1+\tanh\xi)/2\in\Gamma$). (iv) completeness of $BC_\mu$ is standard.
 \end{proof}

 \begin{lemma}\label{Sch1}
 	Assume (C1)--(C3) and the Green kernel properties: $G\in L^1$ with $|G(\xi)|\le C e^{-\gamma|\xi|}$ for some $\gamma>\mu$, and $-G\ge0$. Then:
 	\begin{enumerate}
 		\item $F(\Gamma)\subset\Gamma$.
 		\item $F$ is continuous on $(B_\mu(\mathbb R, \mathbb R),\|\cdot\|_\mu)$.
 		\item $F$ is compact on $\Gamma$.
 	\end{enumerate}
 \end{lemma}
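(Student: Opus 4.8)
The plan is to treat the three assertions separately, exploiting throughout the convolution representation $F(\phi)=-G*H(\phi)$ together with the sign condition $-G\ge0$, the exponential bound $|G(\xi)|\le Ce^{-\gamma|\xi|}$ with $\gamma>\mu$, and the order/boundedness properties of $H$ on $\Gamma$ (the unnumbered lemma following \eqref{wave1}), where here $K=1$. For (1), I would first record the normalization $\int_{\mathbb R}(-G(z))\,dz=1/\beta$. This follows by testing $\mathcal L$ on constants: since $\mathcal L(\widehat a)=-\beta a$, the unique bounded solution of $\mathcal L\phi=\hat 1$ is $\phi=-\hat 1/\beta$, and because $\mathcal L^{-1}g=G*g$ one gets $\int_{\mathbb R}G=-1/\beta$. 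Given $\phi\in\Gamma$ one has $0\le H(\phi)\le\beta$ with $H(\phi)$ nondecreasing. Writing $F(\phi)(\xi)=\int_{\mathbb R}(-G(z))\,H(\phi)(\xi-z)\,dz$, nonnegativity of $-G$ and of $H(\phi)$ gives $F(\phi)\ge0$, while $H(\phi)\le\beta$ and the normalization give $F(\phi)\le\beta\cdot(1/\beta)=1$; monotonicity of $\xi\mapsto H(\phi)(\xi-z)$ for each fixed $z$ forces $F(\phi)$ nondecreasing. For the endpoints I would identify $\lim_{\xi\to-\infty}H(\phi)(\xi)=f_c(\hat0)+0=0$ and $\lim_{\xi\to+\infty}H(\phi)(\xi)=f_c(\hat K)+\beta K=\beta$ from (C1), then pass to the limit inside the convolution by dominated convergence (dominant $\beta\,(-G)\in L^1$), yielding limits $0$ and $1$. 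Continuity of $F(\phi)$ is automatic as a convolution of $L^1$ with $L^\infty$, so $F(\phi)\in\Gamma$.

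For (2), the heart is a weighted Lipschitz estimate. From (C2) the map $f_c$ inherits the Lipschitz constant $L$, and since $f_c(\phi_\xi)$ depends only on the values of $\phi$ on a window of fixed length $r=c\tau$ ending at $\xi$,
\[
|f_c(\phi_\xi)-f_c(\psi_\xi)|\le L\sup_{s\in[\xi-c\tau,\,\xi]}|\phi(s)-\psi(s)|\le L\,e^{\mu c\tau}e^{\mu|\xi|}\|\phi-\psi\|_\mu ,
\]
so that $|H(\phi)(\xi)-H(\psi)(\xi)|\le M\,e^{\mu|\xi|}\|\phi-\psi\|_\mu$ with $M=Le^{\mu c\tau}+\beta$. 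Convolving against $|G|$ and using $|y|\le|\xi|+|\xi-y|$ together with $\gamma>\mu$ gives the crucial kernel bound
\[
e^{-\mu|\xi|}\int_{\mathbb R}|G(\xi-y)|\,e^{\mu|y|}\,dy\le C\int_{\mathbb R}e^{-(\gamma-\mu)|z|}\,dz=\frac{2C}{\gamma-\mu}.
\]
Combining these yields $\|F(\phi)-F(\psi)\|_\mu\le\frac{2CM}{\gamma-\mu}\|\phi-\psi\|_\mu$, so $F$ is globally Lipschitz, hence continuous, on $B_\mu$; the same estimate with $\psi\equiv\hat0$ (and $f_c(\hat0)=0$) shows $F(\phi)$ is well defined for every $\phi\in B_\mu$, even when $\phi$ is unbounded.

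For (3), I would establish relative compactness of $F(\Gamma)$ in $\|\cdot\|_\mu$ via Arzelà--Ascoli together with a weighted-tail argument. Since the exponential form in Lemma~\ref{Green} gives $G'\in L^1$, one has $F(\phi)'=-G'*H(\phi)$ and hence $\|F(\phi)'\|_\infty\le\|G'\|_{L^1}\,\beta$ uniformly in $\phi\in\Gamma$; with the uniform bound $0\le F(\phi)\le1$ this makes $F(\Gamma)$ uniformly bounded and equi-Lipschitz, hence relatively compact in $C([-R,R])$ for every $R$. The weight then controls tails uniformly: for any $\phi,\psi\in\Gamma$,
\[
e^{-\mu|\xi|}\,|F(\phi)(\xi)-F(\psi)(\xi)|\le 2\,e^{-\mu|\xi|},
\]
which is less than $\varepsilon$ as soon as $|\xi|>R$ with $2e^{-\mu R}<\varepsilon$, independently of $\phi,\psi$. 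Thus, given any sequence $(F(\phi_n))$, a diagonal extraction over $R\to\infty$ produces a subsequence converging uniformly on compacta to some $g$ with $0\le g\le1$, and the tail estimate upgrades this to convergence in $\|\cdot\|_\mu$; therefore $F(\Gamma)$ is relatively compact.

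I expect the genuine obstacle to be the weighted estimate in (2): the discrete reaction delay forces dependence on the shifted window $[\xi-c\tau,\xi]$, producing the amplification factor $e^{\mu c\tau}$, and the entire continuity (and well-posedness on unbounded $\phi$) argument hinges on the spectral-gap inequality $\gamma>\mu$ to keep $\int_{\mathbb R}e^{-(\gamma-\mu)|z|}\,dz$ finite. This is precisely where the weighted space $B_\mu$ and the hyperbolicity established in Proposition~\ref{prop:rootcount} are indispensable; the remaining verifications in (1) and (3) are comparatively routine once the normalization $\int(-G)=1/\beta$, the endpoint identification through (C1), and the uniform derivative bound are in hand.
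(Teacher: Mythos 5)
Your argument is correct, and for parts (1) and (3) it takes a genuinely different route from the paper. For (1), the paper does not verify $F(\Gamma)\subset\Gamma$ directly: it sandwiches $F(\phi)$ between $F(\underline{\phi})$ and $F(\overline{\phi})$ using an ordered pair of upper and lower solutions (which are not among the lemma's hypotheses) and infers the limits at $\pm\infty$ from those of $\underline{\phi},\overline{\phi}$; the monotonicity of $F(\phi)$ is not checked there at all. You instead verify every defining property of $\Gamma$ intrinsically — nonnegativity and the bound $F(\phi)\le 1$ from $0\le H(\phi)\le\beta$ together with the normalization $\int_{\mathbb R}(-G)=1/\beta$ (which the paper only derives later, in the proof of Theorem~\ref{smooth1}), monotonicity from item (ii) of the lemma on $H$, and the endpoint limits by dominated convergence. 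This is more self-contained and actually closes a gap in the paper's own treatment of (1). For (2) your weighted Lipschitz estimate for $H$ (constant $Le^{\mu c\tau}+\beta$) and the kernel bound $\int|G(z)|e^{\mu|z|}dz<\infty$ reproduce the paper's argument essentially verbatim. For (3) the paper approximates $F$ by truncated operators $F^n$ that freeze $F(\phi)$ outside $[-n,n]$ and shows $F^n\to F$ in $\|\cdot\|_\mu$; you instead prove equi-Lipschitzness of $F(\Gamma)$ directly via $\|F(\phi)'\|_\infty\le\|G'\|_{L^1}\beta$ and combine Arzel\`a--Ascoli on compacta with the uniform tail bound $e^{-\mu|\xi|}|F(\phi)(\xi)-F(\psi)(\xi)|\le 2e^{-\mu|\xi|}$ and a diagonal extraction. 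Both are valid; yours avoids introducing the auxiliary operators and makes the role of the weight in absorbing the tails explicit, while the paper's truncation argument generalizes more easily when a uniform derivative bound is unavailable.
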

%
%
%
%

 
%
%
%
 
 \begin{proof}
 	Looking at $i.)$ we first show $ \underline{\phi}\le F(\underline{\phi})$. Using \eqref{s1}, we have
 	\begin{align*}
 		 F(\underline{\phi})(t)&\,=-\int_{-\infty}^{\infty} G(t-s)H (\underline{\phi})(s) ds\\
 		&\ge \int_{-\infty}^{\infty} G(t-s)\left(c\underline{\phi}'''(s)+D\underline{\phi}''(s)-c\underline{\phi}'(s)-\beta \underline{\phi}(s)\right) ds=\underline{\phi}(t)
 	\end{align*}
 	for all $t\in \R.$
 	Next, we show $F(\overline{\phi})   \le\overline{\phi}$.
 	\begin{align*}
 		F(\overline{\phi}) (t)&\,=-\int_{-\infty}^{\infty} G(t-s)H (\overline{\phi})(s) ds\\
 		&\le \int_{-\infty}^{\infty} G(t-s)\left(c\overline{\phi}'''(s)+D\overline{\phi}''(s)-c\overline{\phi}'(s)-\beta\overline{\phi}(s)\right) ds=\overline{\phi}(t)
 	\end{align*}
 	for all $t\in \R.$  
	Thus, $$\underline{\phi} \le F(\underline{\phi}) \le F(\phi) \le F(\overline{\phi})   \le \overline{\phi}.$$
 	On the other hand, we have  
 		\begin{align*}
 			\lim_{\xi \to -\infty} \underline{\phi}(\xi) = 0, \quad 
 	\lim_{\xi \to -\infty} \overline{\phi}(\xi) = 0 \text{ then } \lim_{\xi \to -\infty} F( \phi (\xi)) = 0.
 	\\
 		\lim_{\xi \to +\infty} \underline{\phi}(\xi) = 1, \quad 
 	\lim_{\xi \to +\infty} \overline{\phi}(\xi) = 1 \text{ then } \lim_{\xi \to +\infty} F( \phi (\xi)) = 1.
 	 	\end{align*}
 	 
 	Therefore,  	 $F:\Gamma\to \Gamma.$
 	
 	Now, looking at $ii.)$ we first show $H$ is continuous with respect to  $|\cdot|_{\mu}$ in $B_{\mu}(\R,\R).$ 
 	Assume (C2): $f:C([-\tau,0],\mathbb R)\to\mathbb R$ is Lipschitz with constant $L>0$,
 	and recall that $H(\phi)(t)=f_c(\phi_t)+\beta\phi(t)$.
 	Fix $\varepsilon>0$. 
 	
 	For $\phi,\psi\in B_\mu(\mathbb R,\mathbb R)$, we have for each $t\in\mathbb R$,
 	\[
 	|H(\phi)(t)-H(\psi)(t)|
 	\le |f_c(\phi_t)-f_c(\psi_t)| + \beta|\phi(t)-\psi(t)|.
 	\]
 	Since $f_c$ is $L$–Lipschitz on histories,
 	\[
 	|f_c(\phi_t)-f_c(\psi_t)|
 	\le L\sup_{\theta\in[-\tau,0]}|\phi(t+c\theta)-\psi(t+c\theta)|.
 	\]
 	Multiplying both sides by $e^{-\mu|t|}$ and noting that 
 	for $\theta\in[-\tau,0]$ we have $|t+c\theta|-|t|\le |c\theta|=r$, we get
 	\[
 	e^{-\mu|t|}|f_c(\phi_t)-f_c(\psi_t)|
 	\le L e^{\mu r}\|\phi-\psi\|_\mu.
 	\]
 	Similarly,
 	\[
 	e^{-\mu|t|}\beta|\phi(t)-\psi(t)| \le \beta\|\phi-\psi\|_\mu.
 	\]
 	Taking the supremum over $t\in\mathbb R$ gives
 \begin{equation} \label{h est}
 	\|H(\phi)-H(\psi)\|_\mu
 \le (L e^{\mu r} + \beta)\,\|\phi-\psi\|_\mu.
 \end{equation}
 
 	Hence $H$ is globally Lipschitz (and therefore continuous) on 
 	$B_\mu(\mathbb R,\mathbb R)$ with constant $L e^{\mu r}+\beta$.
 	
 	We recall that $B_\mu(\mathbb R,\mathbb R)$ denotes the weighted Banach space 
 	equipped with the exponential norm 
 	\[
 	\|u\|_\mu:=\sup_{t\in\mathbb R} e^{-\mu|t|}|u(t)|,
 	\qquad 0<\mu<\gamma,
 	\]
 	where $\gamma$ is such that the Green kernel $G$ of $\mathcal L$ satisfies
 	$|G(\xi)|\le C_G e^{-\gamma|\xi|}$.

 	We first justify the elementary weighted inequality used below.
 	Let $h\in B_\mu(\mathbb R,\mathbb R)$. By the definition of the weighted norm,
 	for every $s\in\mathbb R$,
 	\[
 	|h(s)| = e^{\mu|s|}\bigl(e^{-\mu|s|}|h(s)|\bigr) \le e^{\mu|s|}\|h\|_\mu,
 	\]
 	which is precisely
 \begin{equation} \label{*}	|h(s)| \le \|h\|_\mu\,e^{\mu|s|}.
 \end{equation}
 
 	Applying \eqref{*} to $h=H(\phi)-H(\psi)$ gives, for all $s$,
 	\[
 	|H(\phi)(s)-H(\psi)(s)| \le \|H(\phi)-H(\psi)\|_\mu\,e^{\mu|s|}.
 	\]
 	
 	Now fix $\phi,\psi\in B_\mu(\mathbb R,\mathbb R)$. For each $t\in\mathbb R$,
 	\[
 	\begin{aligned}
 		|F(\phi)(t)-F(\psi)(t)|
 		&\le \int_{\mathbb R} |G(t-s)|\,|H(\phi)(s)-H(\psi)(s)|\,ds\\
 		&\le \|H(\phi)-H(\psi)\|_\mu
 		\int_{\mathbb R} |G(t-s)|\,e^{\mu|s|}\,ds.
 	\end{aligned}
 	\]
 	Make the change of variables \(r=t-s\). Using the inequality \(|s|=|t-r|\le |t|+|r|\) we obtain
 	\[
 	\int_{\mathbb R} |G(t-s)|\,e^{\mu|s|}\,ds
 	= \int_{\mathbb R} |G(r)|\,e^{\mu|t-r|}\,dr
 	\le e^{\mu|t|}\int_{\mathbb R} |G(r)|\,e^{\mu|r|}\,du.
 	\]
 	Set
 	\[
 	C_\mu:=\int_{\mathbb R} |G(r)|\,e^{\mu|r|}\,dr <\infty,
 	\]
 	which is finite because $G$ decays exponentially faster than $e^{-\mu|\cdot|}$ by choice of $\mu<\gamma$.
 	Combining the estimates yields, for every $t$,
 	\[
 	|F(\phi)(t)-F(\psi)(t)| \le C_\mu\,e^{\mu|t|}\,\|H(\phi)-H(\psi)\|_\mu.
 	\]
 	Multiplying by $e^{-\mu|t|}$ and taking the supremum over $t\in\mathbb R$ gives the weighted norm bound
 \begin{equation}
 	\|F(\phi)-F(\psi)\|_\mu \le C_\mu\,\|H(\phi)-H(\psi)\|_\mu.
 \label{1}
 \end{equation}

 	Finally, by \eqref{h est} we have the Lipschitz bound
 	\[
 	\|H(\phi)-H(\psi)\|_\mu \le (L e^{\mu r} + \beta)\,\|\phi-\psi\|_\mu,
 	\]
 	where $r=c\tau$. Combining this with \eqref{1} yields
 	\[
 	\|F(\phi)-F(\psi)\|_\mu \le K\,\|\phi-\psi\|_\mu,
 	\qquad K:=C_\mu\,(L e^{\mu r}+\beta).
 	\]
 	Thus $F$ is Lipschitz continuous (hence continuous) on $B_\mu(\mathbb R,\mathbb R)$.

 	Looking at $iii.) $ define 
 	$$F^n(\phi)(t)=\begin{cases} F(\phi)(t), & \ \text{when}\ t\in [-n,n]\\
 		F(\phi)(n), & \ \text{when}\ t\in (n,\infty)\\
 		F(\phi)(-n), & \ \text{when}\ t\in (-\infty,-n).
 	\end{cases}$$
 	Using parts $i.)$ and $ii.)$ shows that $F^n(\Gamma)$ is equicontinuous and uniformly bounded on any finite interval in $\R$. Thus, take $n\in \N$, then in the interval $[-n,n]$ we can say that $F^n$ is compact by Arzela-Ascoli.
 	\\ Fix $t\in \R$. Then, for all  $\phi(t)\in \Gamma. $
	\begin{align*}
 			\left|F^n(\phi) -F(\phi)\right|_{\mu}&\, = \sup_{t\in \R} |F^n(\phi)(t)-F(\phi)(t)| \,  e^{-\mu |t|}\\
 		&=\sup_{t\in (-\infty,-n)\bigcup (n,\infty)}|F^n(\phi)(t)-F(\phi)(t)|\,  e^{-\mu |t|}\\
 		&\le  \sup_{t\in (-\infty,-n)}|F(\phi)(-n)-F(\phi)(t)| \,  e^{-\mu |t|} +\sup_{t\in  (n,\infty)}|F(\phi)(n)-F(\phi)(t)|     \,  e^{-\mu |t|}\\
 		&\le K   \sup_{t\in (-\infty,-n)}| \phi (-n)-\phi(t)|\,  e^{-\mu |t|}  +K \sup_{t\in  (n,\infty)}|\phi(n)-\phi(t)|    \,  e^{-\mu |t|}\\
 		&\le 2K    | \phi (-n)|\,  e^{-\mu n}  +2K  |\phi(n)|    \,  e^{-\mu n}\\
 		&\le 2 K \left( | \phi (-n)| + |\phi(n)| \right)e^{-\mu n}\to 0 \quad \text{as} \quad n\to \infty.
 	\end{align*} 
 	Thus, $F^n\to F$ in $\Gamma$ as $n\to \infty.$ From proposition 2.1 in \cite{Zeidler} 
 	we can apply Arzela-Ascoli to $F$ as well. The proof is complete.
 \end{proof}
 
 \begin{theorem}\label{main1}
 	Let $\overline{\phi},\underline{\phi}\in\Gamma$ be an ordered pair of upper and lower solutions of Eq.~\eqref{wave1}, satisfying
 	\[
 	0 \le \underline{\phi}(t) \le \overline{\phi}(t), \qquad t\in\mathbb R.
 	\]
 	Then there exists at least one monotone traveling wave solution $\phi\in\Gamma$ of Eq.~\eqref{wave1} such that 
 	\[
 	\underline{\phi}(t) \le \phi(t) \le \overline{\phi}(t), \qquad t\in\mathbb R.
 	\]
 \end{theorem}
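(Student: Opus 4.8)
The plan is to apply Schauder's fixed point theorem to the operator $F=-\mathcal L^{-1}H=-G\ast H$ restricted to the order interval cut out by the given super- and subsolution. First I would set
\[
\Gamma^\ast := \{\phi\in\Gamma : \underline{\phi}\le\phi\le\overline{\phi}\},
\]
and record that $\Gamma^\ast$ inherits the structural properties of $\Gamma$ from Lemma~\ref{Banach}: it is nonempty (it contains $\underline{\phi}$), convex (an intersection of convex sets), and bounded in $B_\mu(\mathbb R,\mathbb R)$. For closedness I would observe that $\|\cdot\|_\mu$-convergence forces pointwise convergence, since the weight $e^{-\mu|\cdot|}$ is strictly positive; hence both the monotonicity/limit constraints defining $\Gamma$ and the two-sided order constraints $\underline{\phi}\le\phi\le\overline{\phi}$ pass to $\|\cdot\|_\mu$-limits. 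Thus $\Gamma^\ast$ is a nonempty, closed, convex, bounded subset of the Banach space $B_\mu(\mathbb R,\mathbb R)$.

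The crucial step is the invariance $F(\Gamma^\ast)\subset\Gamma^\ast$, which combines three ingredients. First, by Lemma~\ref{Green} the Green kernel satisfies $-G\ge0$, and $H$ is monotone (property (iii) of the $H$-lemma), so $F=-G\ast H$ is order-preserving on functions valued in $[0,K]$. Second, Lemma~\ref{lem:upper-lower-simple} translates the super/subsolution hypotheses into the operator inequalities $F\overline{\phi}\le\overline{\phi}$ and $\underline{\phi}\le F\underline{\phi}$. Hence for any $\phi\in\Gamma^\ast$, monotonicity of $F$ applied to $\underline{\phi}\le\phi\le\overline{\phi}$ gives
\[
\underline{\phi}\le F\underline{\phi}\le F\phi\le F\overline{\phi}\le\overline{\phi}.
\]
Third, Lemma~\ref{Sch1}(1) guarantees $F\phi\in\Gamma$, i.e.\ $F\phi$ is again nondecreasing with the correct limits $0$ and $K$ at $\mp\infty$. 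Combining these, $F\phi\in\Gamma^\ast$, so $F$ maps $\Gamma^\ast$ into itself.

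It then remains to invoke the analytic properties of $F$ already established. By Lemma~\ref{Sch1}(2)--(3), $F$ is continuous on $(B_\mu,\|\cdot\|_\mu)$ and compact on $\Gamma$, hence continuous and compact on the smaller set $\Gamma^\ast$. Since $\Gamma^\ast$ is a nonempty, closed, bounded, convex subset of a Banach space and $F:\Gamma^\ast\to\Gamma^\ast$ is compact and continuous, Schauder's fixed point theorem yields a fixed point $\phi\in\Gamma^\ast$, i.e.\ $\phi=-\mathcal L^{-1}H(\phi)$. By the reformulation \eqref{5.7} this $\phi$ solves the profile equation \eqref{5.3} (equivalently \eqref{wave1}); membership in $\Gamma^\ast\subset\Gamma$ makes it a monotone wavefront connecting $0$ and $K$, and the order constraints give $\underline{\phi}\le\phi\le\overline{\phi}$, as required.

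I expect the main obstacle to be the invariance $F(\Gamma^\ast)\subset\Gamma^\ast$, and within it the preservation of monotonicity of the profile under $F$. The two-sided bound follows cleanly from the sign condition $-G\ge0$ of Lemma~\ref{Green}, but showing that $F\phi$ is again nondecreasing with the correct end states is exactly the content of Lemma~\ref{Sch1}(1), which leans on property (ii) of $H$ and on the integrability and exponential decay of $G$. A secondary point worth checking is that the order interval is genuinely closed in the \emph{weighted} topology rather than merely the sup topology; this is harmless precisely because $\|\cdot\|_\mu$-limits are pointwise limits, so the defining inequalities are stable.
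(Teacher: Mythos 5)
Your proposal is correct and follows essentially the same route as the paper: both restrict $F=-\mathcal L^{-1}H$ to the order interval $\Gamma_0=\{\phi\in\Gamma:\underline{\phi}\le\phi\le\overline{\phi}\}$, use the order-preservation of $F$ together with $F\overline{\phi}\le\overline{\phi}$ and $\underline{\phi}\le F\underline{\phi}$ to get invariance, and then apply Schauder's fixed point theorem via the continuity and compactness established in Lemma~\ref{Sch1}. Your additional remarks on the closedness of the order interval in the $\|\cdot\|_\mu$-topology only make explicit what the paper leaves implicit.
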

 
 \begin{proof}
 	By Lemma~\ref{Sch1}, the operator $F=-\mathcal L^{-1}H$ is continuous, compact, and order-preserving on $\Gamma$. 
 	Since $\underline{\phi}$ and $\overline{\phi}$ are, respectively, lower and upper solutions, one has
 	\[
 	\underline{\phi} \le F(\underline{\phi}) \le F(\overline{\phi}) \le \overline{\phi}.
 	\]
 	Therefore, $F$ maps the nonempty, closed, convex, and bounded subset 
 	\[
 	\Gamma_0 := \{\phi\in\Gamma : \underline{\phi}\le \phi \le \overline{\phi}\}
 	\]
 	into itself. The Schauder Fixed Point Theorem then guarantees the existence of a fixed point $\phi=F(\phi)$ in $\Gamma_0$. 
 	Consequently, $\phi$ is a monotone traveling wave solution of Eq.~\eqref{wave1}.
 \end{proof}
 
 \medskip
 
 It is often difficult to construct a lower solution $\underline{\phi}\in\Gamma$ satisfying all the required boundary conditions. 
 In this case, one may instead establish the following auxiliary lemma, which will be useful in constructing asymptotic bounds for traveling waves.
 
 \begin{lemma}\label{diff1}
 	Let $\phi:\mathbb R\to\mathbb R$ be a differentiable function such that $\phi'(t)$ is uniformly continuous and 
 	\[
 	\lim_{t\to+\infty}\phi(t)=a
 	\]
 	for some constant $a\in\mathbb R$. Then 
 	\[
 	\lim_{t\to+\infty}\phi'(t)=0.
 	\]
 \end{lemma}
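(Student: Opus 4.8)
The statement is a version of \emph{Barbalat's lemma}, and the plan is to argue by contradiction. Suppose the conclusion fails. Then $\phi'(t)\not\to0$ as $t\to+\infty$, so there exist $\varepsilon>0$ and a sequence $t_n\to+\infty$ with $|\phi'(t_n)|\ge\varepsilon$ for every $n$. The goal is to convert this persistent nonvanishing of the derivative into a persistent oscillation of $\phi$ itself, contradicting the hypothesis that $\phi(t)\to a$.

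First I would invoke uniform continuity of $\phi'$ to produce a single $\delta>0$, independent of $n$, such that $|\phi'(s)-\phi'(t)|<\varepsilon/2$ whenever $|s-t|\le\delta$. Applying this on each interval $[t_n,t_n+\delta]$, every value $\phi'(s)$ there lies within $\varepsilon/2$ of $\phi'(t_n)$, whose absolute value is at least $\varepsilon$. Hence $|\phi'(s)|\ge\varepsilon/2$ on the whole interval, and---this is the crucial point---$\phi'(s)$ keeps the same sign as $\phi'(t_n)$ throughout, since it can never drop below $\varepsilon/2$ in magnitude and therefore cannot cross zero. This sign-consistency is precisely what uniform continuity buys us, and it is the step I expect to be the main obstacle to state cleanly: without it the integral of $\phi'$ could cancel and the argument would collapse.

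With the sign fixed, the fundamental theorem of calculus gives
\[
\bigl|\phi(t_n+\delta)-\phi(t_n)\bigr|
=\left|\int_{t_n}^{t_n+\delta}\phi'(s)\,ds\right|
=\int_{t_n}^{t_n+\delta}|\phi'(s)|\,ds
\ge\frac{\varepsilon}{2}\,\delta>0,
\]
where the middle equality uses that the integrand does not change sign. The right-hand bound is a fixed positive number, uniform in $n$.

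Finally I would derive the contradiction from convergence of $\phi$. Since $\lim_{t\to+\infty}\phi(t)=a$ and both $t_n\to+\infty$ and $t_n+\delta\to+\infty$, we have $\phi(t_n+\delta)-\phi(t_n)\to a-a=0$ as $n\to\infty$. This contradicts the lower bound $\varepsilon\delta/2>0$ established above. Therefore the assumption $\phi'(t)\not\to0$ is untenable, and we conclude $\lim_{t\to+\infty}\phi'(t)=0$. The only analytic inputs are uniform continuity (for the uniform $\delta$ and the sign-preservation) and the fundamental theorem of calculus; no differentiability of $\phi'$ or boundedness assumptions beyond those given are needed.
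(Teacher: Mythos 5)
Your proof is correct and follows essentially the same route as the paper's: argue by contradiction, extract a uniform $\delta$ from the uniform continuity of $\phi'$, and use the fundamental theorem of calculus to produce a persistent increment $\bigl|\phi(t_n+\delta)-\phi(t_n)\bigr|\ge\varepsilon\delta/2$ that contradicts the convergence of $\phi$. If anything, your version is slightly more complete: you handle $|\phi'(t_n)|\ge\varepsilon$ of either sign via the sign-preservation observation, whereas the paper only writes out the case $\limsup_{t\to\infty}\phi'(t)>0$ and leaves the symmetric case implicit.
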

 
 \begin{proof}
 	Suppose by contradiction that $\limsup_{t\to\infty}\phi'(t)=\eta>0$. 
 	Then there exists a sequence $\{t_n\}$ with $t_n\to\infty$ such that $\phi'(t_n)\ge\eta/2$ for all $n$. 
 	By uniform continuity of $\phi'$, there exists $\delta>0$ such that 
 	\(
 	|\phi'(t)-\phi'(s)|<\eta/4
 	\)
 	whenever $|t-s|<\delta$. 
 	Hence,
 	$$
 	\phi(t_n + \delta) - \phi(t_n) 
 	= \int_{t_n}^{t_n + \delta} \phi'(s)\, ds 
 	\ge \int_{t_n}^{t_n + \delta} \left( \phi'(t_n) - \frac{\eta}{4} \right) ds 
 	\ge \frac{\delta \eta}{4} > 0.
 	$$
 	contradicting the assumption that $\phi(t)\to a$ as $t\to\infty$. 
 	Therefore, $\lim_{t\to\infty}\phi'(t)=0$.
 \end{proof}
 
 \begin{remark}
 	If $\phi\in BC^3(\mathbb R,\mathbb R)$, then all its derivatives up to second order vanish at infinity, that is,
 	\[
 	\lim_{t\to+\infty}\phi'(t)=0, 
 	\qquad 
 	\lim_{t\to+\infty}\phi''(t)=0.
 	\]
 	Indeed, since $\phi''$ is uniformly continuous whenever $\phi\in BC^3(\mathbb R,\mathbb R)$, the same argument as in Lemma~\ref{diff1} applies.
 \end{remark}

 \begin{theorem}\label{the main2}
 	Assume that the conditions \textup{(C1)}–\textup{(C3)} hold.  
 	If there exist an upper solution $\overline{\phi}\in\Gamma$ and a lower solution 
 	$\underline{\phi}$ (not necessarily in $\Gamma$) of Eq.~\eqref{wave1} such that
 	\[
 	0 \le \underline{\phi}(t) \le \phi(t) \le \overline{\phi}(t), 
 	\qquad \forall\, t\in\mathbb{R},
 	\]
 	and
 	\[
 	\lim_{t\to+\infty}\underline{\phi}(t)=a, \qquad 0<a\le1,
 	\]
 	then Eq.~\eqref{wave1} admits a monotone traveling-wave solution 
 	$\phi\in\Gamma$ connecting $0$ and $1$.
 \end{theorem}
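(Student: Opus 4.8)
The plan is to realize the wave profile as the monotone limit of the iteration generated by $F=-\mathcal L^{-1}H$ starting from the upper solution, and then to recover its boundary behaviour \emph{a posteriori} from the profile equation together with the root structure in (C1). Concretely, I would set $\phi_0:=\overline\phi$ and $\phi_{n+1}:=F(\phi_n)$. Since $\overline\phi$ is an upper solution, Lemma~\ref{lem:upper-lower-simple} gives $F\overline\phi\le\overline\phi$, and because $-G\ge0$ and $H$ is monotone the map $F$ is order-preserving, so $\{\phi_n\}$ is nonincreasing. The lower solution supplies a floor: from $\underline\phi\le\overline\phi$ and $F\underline\phi\ge\underline\phi$ (Lemma~\ref{lem:upper-lower-simple} applied to $\underline\phi\in BC^3(\mathbb R,[0,K])$), induction gives $\underline\phi\le F^n\underline\phi\le F^n\overline\phi=\phi_n$, whence $\phi_n(t)\ge\underline\phi(t)\ge0$ for every $t$. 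Thus $\phi_n(t)$ decreases to a limit $\phi^*(t)$ with $\underline\phi\le\phi^*\le\overline\phi$.

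Next I would verify that $\phi^*$ is a fixed point of $F$ lying in $\Gamma$. Each $\phi_n\in\Gamma$ (as $\overline\phi\in\Gamma$ and $F(\Gamma)\subset\Gamma$ by Lemma~\ref{Sch1}), hence each is nondecreasing, and a pointwise limit of nondecreasing functions is nondecreasing. To pass to the limit in $\phi_{n+1}=F(\phi_n)$, I would use that $F(\phi)(t)=-\int_{\mathbb R}G(t-s)H(\phi)(s)\,ds$ with $G\in L^1$ and $0\le H(\phi_n)\le\beta K$ uniformly bounded; dominated convergence then yields $F(\phi_n)(t)\to F(\phi^*)(t)$ for each fixed $t$, while the left-hand side tends to $\phi^*(t)$, giving $\phi^*=F(\phi^*)$. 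The limit at $-\infty$ is immediate by squeezing: $0\le\phi^*\le\overline\phi$ and $\overline\phi(\xi)\to0$ force $\phi^*(\xi)\to0$ as $\xi\to-\infty$.

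The crux, and the step I expect to be the main obstacle, is showing $\lim_{\xi\to+\infty}\phi^*(\xi)=1$ rather than merely some $\ell\in[a,1]$; here the lower solution, which only prescribes the positive floor $a$, cannot do the work directly. Monotonicity and boundedness guarantee that $\ell:=\lim_{\xi\to+\infty}\phi^*(\xi)$ exists with $a\le\ell\le1$. To identify $\ell$ I would first note $\phi^*\in BC^3(\mathbb R,\mathbb R)$: since $\phi^*=-G*H(\phi^*)$ with $H(\phi^*)$ bounded, the relation $\alpha c\,\phi^{*\prime\prime\prime}=-D\phi^{*\prime\prime}+c\phi^{*\prime}+\beta\phi^*-H(\phi^*)$ keeps $\phi^{*\prime\prime\prime}$ bounded. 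Then $\phi^{*\prime\prime}$ is uniformly continuous, so Lemma~\ref{diff1} and the ensuing Remark give $\phi^{*\prime}(\xi)\to0$ and $\phi^{*\prime\prime}(\xi)\to0$. The same displayed relation now forces $\phi^{*\prime\prime\prime}$ to converge (its right-hand side does), and a Barbalat-type step—$\phi^{*\prime\prime}(\xi+1)-\phi^{*\prime\prime}(\xi)=\int_\xi^{\xi+1}\phi^{*\prime\prime\prime}\to0$ together with the convergence of $\phi^{*\prime\prime\prime}$—yields $\phi^{*\prime\prime\prime}(\xi)\to0$. Letting $\xi\to+\infty$ in the profile equation~\eqref{5.3}, and using $\phi^*_\xi\to\hat\ell$ so that $f_c(\phi^*_\xi)\to f(\hat\ell)$, every derivative term drops and leaves $f(\hat\ell)=0$.

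Finally I would invoke (C1): on $[0,K]=[0,1]$ the only zeros of $f$ are $\hat0$ and $\hat K$. Because $\ell\ge a>0$, the value $\ell=0$ is excluded, so $\ell=1$. Together with the $-\infty$ limit and the monotonicity established above, this places $\phi^*$ in $\Gamma$, and being a fixed point of $F$ it is the desired monotone wavefront connecting $0$ and $1$. The structural difference from Theorem~\ref{main1} is exactly that the correct asymptotics at $+\infty$ are not assumed through $\underline\phi\in\Gamma$ but are reconstructed from the vanishing of the derivatives of the solution and the root separation in (C1); the lower solution is needed only to exclude the trivial limit by guaranteeing $\ell\ge a>0$.
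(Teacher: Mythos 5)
Your proposal is correct and follows essentially the same route as the paper's proof: monotone iteration $\phi_{n+1}=F(\phi_n)$ downward from $\overline\phi$ with $\underline\phi$ as a floor, passage to the limit to obtain a fixed point, and identification of the limit $\ell$ at $+\infty$ by letting the derivatives vanish in the profile equation so that $f(\hat\ell)=0$, whence (C1) and $\ell\ge a>0$ force $\ell=K$. The only differences are cosmetic: you obtain convergence via pointwise monotone limits plus dominated convergence where the paper invokes compactness of $F$ in the weighted space, and you spell out the vanishing of $\phi'''$ with a Barbalat-type step where the paper cites Lemma~\ref{diff1} and its remark.
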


\begin{proof}
	Let $\overline\phi\in\Gamma$ be an upper solution of \eqref{wave1} and let
	$\underline\phi$ be a lower solution satisfying $0\le\underline\phi\le\overline\phi$ and
	\(\lim_{t\to+\infty}\underline\phi(t)=a>0\).
	Recall \(F=-\mathcal L^{-1}H\) and that, by Lemma~\ref{Sch1}, \(F:\Gamma\to\Gamma\) is order-preserving, continuous and compact.
	
	\medskip\noindent\textbf{Monotone iteration and precompactness:}
	Since $\overline\phi$ is an upper solution we have \(F(\overline\phi)\le\overline\phi\). 
	Because $\underline\phi$ is a lower solution and $F$ is order-preserving, \( \underline\phi\le F(\underline\phi)\). 
	Applying $F$ iteratively and using monotonicity gives, for every \(n\ge0\),
	\[
	\underline\phi \le F^n(\underline\phi)\le F^n(\overline\phi)\le\cdots \le F(\overline\phi)\le\overline\phi.
	\]
	In particular the sequence \(\{\phi_n\}_{n\ge0}\) with \(\phi_n:=F^n(\overline\phi)\) is monotone nonincreasing and bounded below by \(\underline\phi\). 
	By compactness of \(F\), the set \(\{\phi_n\}\) is precompact in the weighted space \(B_\mu(\mathbb R,\mathbb R)\); monotonicity therefore implies that \(\phi_n\) converges in the \(\|\cdot\|_\mu\)-norm to a limit \(\phi\in B_\mu(\mathbb R,\mathbb R)\). Passing to the limit in \(\phi_{n+1}=F(\phi_n)\) and using continuity of \(F\) yields \(F(\phi)=\phi\). Hence \(\phi\) is a fixed point of \(F\), i.e.
	\[
	\mathcal L\phi + H(\phi)=0 \quad\text{on }\mathbb R,
	\]
	so \(\phi\) is a classical profile of the traveling-wave equation.
	
	\medskip\noindent\textbf{ Monotonicity and limits:}
	By construction \(\phi\) is nondecreasing and satisfies
	\[
	0\le \underline\phi(t)\le \phi(t)\le\overline\phi(t)\le K,\qquad t\in\mathbb R.
	\]
	Hence the one-sided limits exist; write
	\[
	\lim_{t\to-\infty}\phi(t)=0,\qquad \lim_{t\to+\infty}\phi(t)=:k\in[0,K].
	\]
	Since \(\phi\ge\underline\phi\) and \(\lim_{t\to+\infty}\underline\phi(t)=a>0\), we have \(k\ge a>0\).
	
	Moreover, because \(\phi= -\mathcal L^{-1}H(\phi)\) and \(\mathcal L^{-1}:BC(\mathbb R)\to BC^3(\mathbb R)\) (invertibility and regularity of \(\mathcal L\) on the chosen weighted spaces), it follows that \(\phi\in BC^3(\mathbb R,\mathbb R)\). By Lemma~\ref{diff1} and the succeeding remark the derivatives \(\phi',\phi'',\phi'''\) vanish at \(+\infty\):
	\[
	\lim_{t\to\infty}\phi'(t)=\lim_{t\to\infty}\phi''(t)=\lim_{t\to\infty}\phi'''(t)=0.
	\]
	
	\medskip\noindent\textbf{ Excluding intermediate limits:}
	We claim \(k=K\). Suppose, to the contrary, that \(k\in(0,K)\). 
	Because \(\phi_t\to\widehat{k}\) (the constant history equal to \(k\)) in \(C([-\tau,0])\) as \(t\to\infty\), by continuity of \(f_c\) we have
	\[
	\lim_{t\to\infty} f_c(\phi_t) = f_c(\widehat{k}).
	\]
	Recalling the definition \(H(\phi)(t)=f_c(\phi_t)+\beta\phi(t)\), we obtain
	\[
	\lim_{t\to\infty} H(\phi)(t) = f_c(\widehat{k}) + \beta k.
	\]
	On the other hand, from \(\mathcal L\phi + H(\phi)=0\) and since
	\(\lim_{t\to\infty}(\alpha c\phi''' + D\phi'' - c\phi')(t)=0\) (vanishing derivatives), we conclude
	\[
	\lim_{t\to\infty}\mathcal L\phi(t) = \lim_{t\to\infty}\big(\alpha c\phi''' + D\phi'' - c\phi' - \beta\phi\big)(t)
	= -\beta k.
	\]
	Taking limits in \(\mathcal L\phi + H(\phi)=0\) gives
	\[
	-\beta k + \bigl(f_c(\widehat{k}) + \beta k\bigr) = 0 \quad\Longrightarrow\quad f_c(\widehat{k}) = 0.
	\]
	This contradicts assumption (C1) which stipulates \(f(\widehat{u})>0\) for every constant history \(\widehat{u}\) with \(0<u<K\). Hence the possibility \(k\in(0,K)\) is excluded.
	
	Therefore \(k=K\), and consequently \(\phi\in\Gamma\) is a monotone traveling-wave solution of \eqref{wave1} connecting \(0\) and \(K\).
\end{proof}

\begin{definition}
	A function $\phi \in BC(\mathbb{R}, \mathbb{R})$ is said to be a 
	\emph{supersolution} (respectively, \emph{subsolution}) 
	of the third-order wave equation
	\[
	c\,\phi'''(t) + D\,\phi''(t) - c\,\phi'(t) + f_c(\phi_t) = 0,
	\]
	if there exist finitely many points $t_i \in \mathbb{R}$, 
	$-\infty < t_1 < \cdots < t_m < \infty$, such that the following conditions hold:
	\begin{enumerate}
		\item[\textnormal{(i)}] 
		$\phi', \phi''$ are absolutely continuous on every compact subset of 
		$\mathbb{R} \setminus \{t_i\}$.
		
		\item[\textnormal{(ii)}] 
		The one-sided limits 
		\[
		\phi'(t_i^+) := \lim_{t \to t_i^+} \phi'(t), 
		\quad 
		\phi'(t_i^-) := \lim_{t \to t_i^-} \phi'(t)
		\]
		exist and satisfy
		\[
		\phi'(t_i^+) \le \phi'(t_i^-)
		\quad 
		\text{(respectively, } \phi'(t_i^-) \le \phi'(t_i^+)
		\text{ for subsolutions)}.
		\]
		
		\item[\textnormal{(iii)}] 
		$\phi', \phi'', \phi'''$ are bounded on $\mathbb{R}\setminus\{t_i\}$.
		
		\item[\textnormal{(iv)}] 
		The differential inequality
		\[
		c\,\phi'''(t) + D\,\phi''(t) - c\,\phi'(t) + f_c(\phi_t) \le 0
		\quad 
		(\text{respectively, } \ge 0)
		\]
		holds for all $t \in \mathbb{R} \setminus \{t_i\}$.
	\end{enumerate}
\end{definition}

In this case, it is possible to use a smoother set of solutions, denoted as quasi upper/lower solutions defined as. 
\begin{definition}
	A function $\phi \in BC^1(\mathbb{R}, \mathbb{R})$ is said to be a 
	\emph{quasi upper solution} (respectively, \emph{quasi lower solution}) 
	of the third-order wave equation
	\[
	c\,\phi'''(t) + D\,\phi''(t) - c\,\phi'(t) + f_c(\phi_t) = 0,
	\]
	if there exist finitely many points $t_i \in \mathbb{R}$, 
	$-\infty < t_1 < \cdots < t_m < \infty$, such that the following conditions hold:
	\begin{enumerate}
		\item[\textnormal{(i)}] 
		$\phi'', \phi'''$ are absolutely continuous on every compact subset of 
		$\mathbb{R} \setminus \{t_i\}$.		
		\item[\textnormal{(ii)}] 
		$ \phi'', \phi'''$ are bounded on $\mathbb{R}\setminus\{t_i\}$.
		
		\item[\textnormal{(iii)}] 
		The differential inequality
		\[
		c\,\phi'''(t) + D\,\phi''(t) - c\,\phi'(t) + f_c(\phi_t) \le 0
		\quad 
		(\text{respectively, } \ge 0)
		\]
		holds for all $t \in \mathbb{R} \setminus \{t_i\}$.
	\end{enumerate}
\end{definition}


 \begin{lemma}\label{lemma:third_order_cphi3}
 	Assume $\mathcal L$ and $H$ are those defined previously (see (5.5)--(5.6)), and let
 	$G$ be the Green function of $\mathcal L$ (so $\mathcal L G=\delta$) which satisfies
 	(M1)--(M3) and the exponential decay estimate $|G(\xi)|\le C_G e^{-\gamma|\xi|}$
 	for some $C_G,\gamma>0$.
 	
 	Let $-\infty<t_1<\cdots<t_m<\infty$ be a finite set of points and suppose
 	\[
 	\phi\in BC^1\bigl(\mathbb R\setminus\{t_i\}\bigr)
 	\]
 	is $C^3$ on each interval determined by the points $t_i$, with $\phi',\phi'',\phi'''$
 	bounded on $\mathbb R\setminus\{t_i\}$ and with existing one-sided limits
 	$\phi'(t_i^\pm),\phi''(t_i^\pm), \phi'''(t_i^\pm)$ at each $t_i$.
 	
 	Define the jump of $\phi'$ at $t_i$ by
 	\[
 	[\![\phi']\!]_{t_i}:=\phi'(t_i^-)-\phi'(t_i^+).
 	\]
 	
 	Then for every $t\in\mathbb R$ the following identity holds:
 	\[
 	-\int_{-\infty}^{\infty} G(t-s)\,H(\phi)(s)\,ds
 	= \phi(t)
 	+ \sum_{i=1}^{m}\bigl(c\,G'(t-t_i)-D\,G(t-t_i)\bigr)\,[\![\phi']\!]_{t_i}.
 	\]
 	(Here $c,D$ are the coefficients appearing in $\mathcal L$ as defined earlier.)
 \end{lemma}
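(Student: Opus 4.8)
The plan is to recognize the left-hand side as an inverse applied to a source and then compare it with $\phi$ through the operator $\mathcal L$. Set
\[
u(t):=-\int_{-\infty}^{\infty}G(t-s)\,H(\phi)(s)\,ds=F(\phi)(t).
\]
Because $\mathcal L G=\delta$ implies $\mathcal L(G*w)=w$ for any admissible source $w$, the function $u$ is the unique bounded (equivalently, unique in the weighted space $B_\mu$) solution of $\mathcal L u=-H(\phi)$; uniqueness is guaranteed by the hyperbolicity established in Propositions~\ref{r0} and \ref{prop:rootcount}, which rules out nonzero bounded null solutions. The key structural input is that in the regime where the lemma is used $\phi$ obeys the profile equation \emph{classically} on each subinterval, i.e.\ $\mathcal L\phi_{\mathrm{cl}}+H(\phi)=0$ on $\mathbb R\setminus\{t_i\}$ (this is exactly how broken super-/sub-solutions glued from genuine solutions arise). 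I would use this to cancel the regular parts, so that $v:=u-\phi$ solves $\mathcal L v=-H(\phi)-\mathcal L\phi$ with a right-hand side supported only on $\{t_i\}$.

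First I would compute the distributional image $\mathcal L\phi$ of the broken profile. Writing the ordinary jumps $[\phi^{(k)}]_{t_i}=\phi^{(k)}(t_i^+)-\phi^{(k)}(t_i^-)$ and using that $\phi$ is continuous while $\phi'$ (and a priori $\phi''$) jump, the elementary rule $g'=g'_{\mathrm{cl}}+\sum[g]\delta$ gives $\phi''=\phi''_{\mathrm{cl}}+\sum_i[\phi']_{t_i}\delta_{t_i}$ and $\phi'''=\phi'''_{\mathrm{cl}}+\sum_i[\phi'']_{t_i}\delta_{t_i}+\sum_i[\phi']_{t_i}\delta'_{t_i}$. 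Substituting into $\mathcal L=c\,\partial^3+D\,\partial^2-c\,\partial-\beta$ and cancelling the classical part against $-H(\phi)$ leaves the purely singular identity
\[
\mathcal L v=-\sum_i\bigl(c[\phi'']_{t_i}+D[\phi']_{t_i}\bigr)\delta_{t_i}-c\sum_i[\phi']_{t_i}\,\delta'_{t_i}.
\]
Inverting via $v=G*\mathcal L v$, and using $G*\delta_{t_i}=G(\cdot-t_i)$ and $G*\delta'_{t_i}=G'(\cdot-t_i)$, expresses $v$ as an explicit combination of $G(t-t_i)$ and $G'(t-t_i)$ weighted by the jumps. Rewriting $[\phi']_{t_i}=-[\![\phi']\!]_{t_i}$ converts the $\delta'$-term into the advertised $c\,G'(t-t_i)[\![\phi']\!]_{t_i}$ and produces the $D\,G(t-t_i)$ contribution, while $[\phi'']_{t_i}$ must be shown to drop out (continuity of $\phi''$ across corners, which holds for profiles patched from solutions of the same scalar ODE).

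As an independent check that keeps the constants honest, I would run the equivalent direct route: three successive integrations by parts of $\int G(t-s)\,\mathcal L\phi(s)\,ds$ over the intervals cut out by $\{t_i\}\cup\{t\}$. There the boundary terms at each $t_i$ reproduce the corner corrections, while the single jump of $G''$ at the diagonal $s=t$—magnitude $1/c$ by condition (M3)—is precisely what regenerates the leading term $\phi(t)$; the remaining integrand vanishes because $\mathcal L G\equiv0$ off the origin. Matching the two computations pins down every coefficient and confirms that the $\phi''$-jumps make no net contribution.

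The main obstacle is exactly this boundary-term bookkeeping. One must carefully separate the corner points $t_i$ from the source point $s=t$, attribute the $1/c$ jump of $G''$ at the diagonal to the recovery of $\phi(t)$ rather than to a corner, and track the signs so that the correction collapses to $\sum_i\bigl(cG'(t-t_i)-DG(t-t_i)\bigr)[\![\phi']\!]_{t_i}$. Controlling the $[\phi'']_{t_i}$ jumps—verifying they vanish or cancel—is the step I expect to demand the most care, and it is where the precise form of the stated coefficient is ultimately decided.
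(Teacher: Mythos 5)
Your plan is essentially the paper's own proof: the authors carry out exactly your ``independent check'' (splitting $\int G(t-s)\,\mathcal L\phi(s)\,ds$ at the corner points, integrating by parts three times on each piece, recovering $\phi(t)$ from $\mathcal L G=\delta$, and collecting the boundary terms at the $t_i$), and your distributional route is an equivalent reformulation of the same computation. You are also right, and more explicit than the paper, that the identity cannot hold for an arbitrary piecewise-$C^3$ function: one must use that $\mathcal L\phi=-H(\phi)$ classically off $\{t_i\}$, which is an unstated hypothesis of the lemma that the paper's proof likewise invokes silently via ``Recall $\mathcal L\phi=-H(\phi)$.''

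The gap is that you stop exactly at the step you yourself identify as decisive, and completing it does not yield the stated formula. Your own displayed singular identity, once convolved with $G$ and rewritten with $[\phi']_{t_i}=-[\![\phi']\!]_{t_i}$, gives
\[
-\int G(t-s)H(\phi)(s)\,ds=\phi(t)+\sum_i\bigl(c\,G'(t-t_i)+D\,G(t-t_i)\bigr)[\![\phi']\!]_{t_i}+c\sum_i G(t-t_i)\,[\![\phi'']\!]_{t_i},
\]
i.e.\ the $D$-term carries the opposite sign from the lemma, and the $\phi''$-jump term survives. The direct integration by parts confirms this: the corner contribution of the $D\phi''$-term is $D\,G(t-t_i)\bigl(\phi'(t_i^-)-\phi'(t_i^+)\bigr)=+D\,G(t-t_i)[\![\phi']\!]_{t_i}$, not $-D\,G(t-t_i)[\![\phi']\!]_{t_i}$. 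Moreover, nothing in the hypotheses forces $[\![\phi'']\!]_{t_i}=0$ --- and in the paper's own application the glued exponential supersolution has $\overline\phi''(0^-)=\tfrac12\eta_1^2\neq-\tfrac12\eta_1^2=\overline\phi''(0^+)$, so that term genuinely does not drop out. (The paper's proof has the same defect: it asserts that the $\phi''$ contributions ``agree or cancel'' without justification and states the $-D$ sign without deriving it.) So your proposal, honestly completed, proves a corrected version of the identity rather than the one stated; to defend the lemma as written you would need either an additional hypothesis ($\phi''$ continuous across each $t_i$) or to trace where the sign of the $D$-term could legitimately flip, and neither is addressed.
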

   
 \begin{proof}
 	Let $-\infty<t_1<\dots<t_m<\infty$ be the (finite) points where $\phi'$ may jump.
 	Set $t_0=-\infty$, $t_{m+1}=+\infty$ and write the real line as the union of the
 	intervals $I_i=(t_i,t_{i+1})$, $i=0,\dots,m$. By hypothesis $\phi\in C^3(I_i)$ for
 	each $i$ and $\phi',\phi'',\phi'''$ are bounded on each $I_i$.
 	
 	Recall $\mathcal L\phi=-H(\phi)$ and that $G$ is the Green's function of $\mathcal L$,
 	so $\mathcal L G=\delta$. We consider the convolution integral
 	\[
 	I(t):=-\int_{-\infty}^{\infty} G(t-s)\,H(\phi)(s)\,ds
 	=\int_{-\infty}^{\infty} G(t-s)\,\mathcal L\phi(s)\,ds.
 	\]
 	Split the integral over the intervals $I_i$:
 	\[
 	I(t)=\sum_{i=0}^m \int_{t_i}^{t_{i+1}} G(t-s)\,\mathcal L\phi(s)\,ds.
 	\]
 	
 	Fix one interval $[a,b]=[t_i,t_{i+1}]$. On $[a,b]$ we may integrate by parts
 	(sending derivatives from $\phi$ onto $G(t-\cdot)$). Writing $\mathcal L\phi=
 	c\phi''' + D\phi'' - c\phi' - \beta\phi$ and integrating the $\phi''$-term once
 	and the $\phi'''$-term three times (grouping boundary terms), a direct computation
 	gives the identity
 	\[
 	\begin{aligned}
 		\int_a^b G(t-s)\,\mathcal L\phi(s)\,ds
 		&= \Big[\,c\big(G(t-s)\phi''(s)-G'(t-s)\phi'(s)+G''(t-s)\phi(s)\big)\\
 		&\qquad + D\big(G(t-s)\phi'(s)-G'(t-s)\phi(s)\big)-c\,G(t-s)\phi(s)\,\Big]_{s=a}^{s=b}\\
 		&\qquad -\int_a^b \phi(s)\,(\mathcal L G)(t-s)\,ds.
 	\end{aligned}
 	\]
 	(Each integration by parts is justified because $\phi$ and its derivatives are
 	continuous on $(a,b)$ and $G(t-s)$ is $C^3$ there; all arising boundary terms are
 	well-defined as one-sided limits when $a$ or $b$ equals some $t_i$.)
 	
 	Summing the previous identity over $i=0,\dots,m$ yields
 	\[
 	\int_{-\infty}^{\infty} G(t-s)\,\mathcal L\phi(s)\,ds
 	= B(t) - \int_{-\infty}^{\infty}\phi(s)\,(\mathcal L G)(t-s)\,ds,
 	\]
 	where $B(t)$ denotes the sum of all boundary contributions at the finite endpoints
 	$t_1,\dots,t_m$ (the contributions at $\pm\infty$ vanish because $G$ decays
 	exponentially and $\phi,\phi',\phi'',\phi'''$ are bounded).
 	
 	Since $\mathcal L G=\delta$, the last integral equals $\phi(t)$. Hence
 	\[
 	I(t)=B(t)-\phi(t).
 	\]
 	It remains to compute $B(t)$. Inspect a single interior point $t_i$. The boundary
 	terms arising from the neighboring intervals produce contributions involving
 	one-sided values of $\phi,\phi',\phi''$ at $t_i$. By the hypotheses the one-sided
 	values of $\phi$ and $\phi''$ agree (or their differences cancel in the sum),
 	so the only net nonvanishing contribution at $t_i$ stems from the jump of $\phi'$.
 	Define the jump at $t_i$ by
 	\[
 	J_i:=\phi'(t_i^-)-\phi'(t_i^+).
 	\]
 	A straightforward collection of the boundary expressions shows that the net
 	contribution at $t_i$ equals
 	\[
 	\bigl(c\,G'(t-t_i)-D\,G(t-t_i)\bigr)\,J_i.
 	\]
 	Summing over $i$ gives
 	\[
 	B(t)=\sum_{i=1}^m \bigl(c\,G'(t-t_i)-D\,G(t-t_i)\bigr)\,J_i.
 	\]
 	
 	Combining the displayed identities and recalling $I(t)=-\int G(t-s)H(\phi)(s)\,ds$
 	yields the claimed formula
 	\[
 	-\int_{-\infty}^{\infty} G(t-s)\,H(\phi)(s)\,ds
 	= \phi(t) + \sum_{i=1}^{m}\bigl(c\,G'(t-t_i)-D\,G(t-t_i)\bigr)\,J_i.
 	\]
 	
 	This completes the proof.
 \end{proof}

 \begin{theorem}\label{smooth1}
 	Let $\Gamma$ and $F:=-\mathcal L^{-1}H$ be as above, and assume that the Green
 	function $G$ of $\mathcal L$ satisfies the exponential decay estimate
 	$|G^{(k)}(x)|\le C_k e^{-\gamma|x|}$ for $k=0,1,2$ and some constants $C_k,\gamma>0$.
 	Let $\overline\phi,\underline\phi\in\Gamma$ be a supersolution and a subsolution,
 	respectively. Then the following regularity assertions hold and the ordering is preserved:
 	\begin{enumerate}
 		\item $F(\overline\phi),\,F(\underline\phi)\in \Gamma\cap W^{2,\infty}(\mathbb R)$.
 		\item $F^2(\overline\phi),\,F^2(\underline\phi)\in \Gamma\cap W^{3,\infty}(\mathbb R)$.
 		\item $F^3(\overline\phi),\,F^3(\underline\phi)\in \Gamma\cap BC^3(\mathbb R)$.
 	\end{enumerate}
 	In each case the inequalities (supersolution $\ge F(\cdot)\ge$ subsolution, etc.)
 	remain valid.
 \end{theorem}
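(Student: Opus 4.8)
The plan is to read all three assertions off the single convolution representation $F(\psi)=-\mathcal L^{-1}H(\psi)=-G*H(\psi)$ and to separate the question of regularity (which, crucially, does not see the corner set $\{t_i\}$) from the question of order (which does). First I would record the inputs: by Lemma~\ref{Green} and the standing decay estimate $|G^{(k)}(x)|\le C_k e^{-\gamma|x|}$ for $k=0,1,2$, each derivative $G^{(k)}$ lies in $L^1(\mathbb R)$; and for any $\psi\in\Gamma$ the datum $H(\psi)=f_c(\psi_\cdot)+\beta\psi$ is bounded and continuous by the order/boundedness properties of $H$ coming from (C1)--(C3). The essential point is that $H(\psi)$ depends only on the \emph{values} of $\psi$ on a sliding window, so even though $\overline\phi,\underline\phi$ have jumps in their first derivative, the functions $H(\overline\phi),H(\underline\phi)$ are still in $BC(\mathbb R,[0,\beta K])$.

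For the regularity I would run a short bootstrap. Since $G,G',G''\in L^1$ and $H(\psi)\in L^\infty$, differentiating under the convolution gives $\frac{d^k}{dt^k}F(\psi)=-G^{(k)}*H(\psi)$ for $k=0,1,2$, each a convolution of an $L^1$ kernel with a bounded continuous function, hence bounded and continuous; thus $F(\psi)\in BC^2\subset W^{2,\infty}(\mathbb R)$, which already yields (1). Writing $u=F(\psi)$, the profile equation $\mathcal L u=-H(\psi)$ reads
\[
c\,u'''=-H(\psi)-D\,u''+c\,u'+\beta\,u ,
\]
whose right-hand side is bounded (so $u\in W^{3,\infty}$) and in fact continuous (so $u\in BC^3$), because $u,u',u''$ and $H(\psi)$ are all continuous and bounded. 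Applying this to $\psi=\overline\phi$ and then to the iterates $F(\overline\phi),F^2(\overline\phi)$ (all of which lie in $\Gamma$, see below) yields assertions (2) and (3); since $BC^3\subset W^{3,\infty}\subset W^{2,\infty}$, the staged claims hold with room to spare, and the finitely many corner points $t_i$ never enter because they affect $\psi$ but not $H(\psi)$.

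For the order statement I would argue separately using that $F$ is monotone: because $-G\ge0$ by Lemma~\ref{Green} and $H$ is order-preserving on $[0,K]$-valued functions (this is exactly (C2) together with \eqref{5.4}), $F=-G*H(\cdot)$ maps ordered pairs to ordered pairs. The base inequalities $F(\overline\phi)\le\overline\phi$ and $F(\underline\phi)\ge\underline\phi$ are the cornered analogue of Lemma~\ref{lem:upper-lower-simple}; to obtain them I would feed the Green representation of Lemma~\ref{lemma:third_order_cphi3} into the definition of a super/subsolution, so that the pointwise differential inequality (valid off $\{t_i\}$) combines with the one-sided condition $\phi'(t_i^+)\le\phi'(t_i^-)$, i.e.\ $[\![\phi']\!]_{t_i}\ge0$, to keep the defect of the correct sign. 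Monotonicity of $F$ then propagates the ordering through the iterates,
\[
\underline\phi \le F(\underline\phi)\le F^{2}(\underline\phi)\le\cdots\le F^{2}(\overline\phi)\le F(\overline\phi)\le\overline\phi ,
\]
and Lemma~\ref{Sch1}(1), giving $F(\Gamma)\subset\Gamma$, guarantees that each iterate is again nondecreasing with the prescribed limits $0$ and $1$; this secures the membership $F^k(\cdot)\in\Gamma$ in each of (1)--(3).

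The step I expect to be the main obstacle is precisely the order preservation for the \emph{weak} (cornered) super/subsolutions, not the regularity. One must verify that, when the Green identity of Lemma~\ref{lemma:third_order_cphi3} is applied to $\overline\phi$, the localized corrections $\sum_i\bigl(c\,G'(\cdot-t_i)-D\,G(\cdot-t_i)\bigr)[\![\overline\phi']\!]_{t_i}$ combine with the sign $-G\ge0$ and the jump sign $[\![\overline\phi']\!]_{t_i}\ge0$ so that $F(\overline\phi)\le\overline\phi$ genuinely persists; this requires a sign/size analysis of the kernel combination $c\,G'-D\,G$ at each corner and is the delicate bookkeeping the proof turns on. The secondary care is to keep all estimates consistent in the weighted space $B_\mu$, so that the finitely many corrections remain exponentially localized and do not disturb the limits at $\pm\infty$, ensuring the iterates stay in $\Gamma$ at every stage.
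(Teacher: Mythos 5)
Your proposal is correct and follows the same skeleton as the paper's proof --- the convolution representation $F(\psi)=-G*H(\psi)$, Young-type bounds using $G,G',G''\in L^1$, differentiation under the integral sign, and monotonicity of $F$ deduced from $-G\ge 0$ (Lemma~\ref{Green}) together with order-preservation of $H$ via (C2)/\eqref{5.4} --- but it departs from the paper in three places. First, for regularity you short-circuit the paper's three-stage bootstrap: since $H(\psi)$ depends only on the values of $\psi$ (never on its derivatives), it is continuous and bounded for every continuous $\psi$, so the identity $c\,u'''=-H(\psi)-D\,u''+c\,u'+\beta\,u$ upgrades $u=F(\psi)$ to $BC^3$ after a \emph{single} application of $F$; the paper instead gains one derivative per iterate ($W^{2,\infty}\to W^{3,\infty}\to BC^3$). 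Your stronger conclusion is legitimate (the distributional identity $\mathcal L u=-H(\psi)$ plus continuity of the right-hand side makes $u''$ continuously differentiable) and it implies the staged claims a fortiori. Second, for the limits at $\pm\infty$ you defer to Lemma~\ref{Sch1}(1), whereas the paper recomputes them directly from $\int_{\mathbb R}G=-1/\beta$ and $H(\overline\phi)(t)\to 0,\ \beta K$; both routes work.

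Third, and most substantively: you correctly flag that the inequality $F(\overline\phi)\le\overline\phi$ for a \emph{cornered} supersolution does not follow from monotonicity of $F$ alone, but requires the jump-corrected Green identity of Lemma~\ref{lemma:third_order_cphi3} together with a sign analysis of the corrections $\bigl(c\,G'(\cdot-t_i)-D\,G(\cdot-t_i)\bigr)[\![\overline\phi']\!]_{t_i}$. The paper's own proof omits this step entirely --- it establishes only that $F$ is order-preserving between ordered inputs and that $F(\overline\phi)$ is nondecreasing, and then asserts without argument that ``the inequalities remain valid.'' So the obstacle you isolate is not a deficiency of your proposal relative to the paper; it is an unresolved step in both arguments, and closing it (by checking the sign of $c\,G'-D\,G$ against the jump condition $[\![\overline\phi']\!]_{t_i}\ge 0$) is what a complete proof of the final sentence of the theorem would require.
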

 \begin{proof}
 	We first recall the standing hypotheses used below:
 	\begin{itemize}
 		\item $\mathcal L\phi=c\phi''' + D\phi'' - c\phi' -\beta\phi$ is the linear operator introduced in (5.5), with $\beta>0$.
 		\item $G$ denotes the Green kernel of $\mathcal L$, so that $\mathcal L G=\delta$, and we assume the exponential decay / integrability property
 		$$G,G',G''\in L^1(\mathbb R),\qquad |G^{(k)}(x)|\le C_k e^{-\gamma|x|},\ k=0,1,2.$$
 		\item $H:\Gamma\to BC(\mathbb R)$ is the nonlinear operator $H(\phi)(t)=f_c(\phi_t)+\beta\phi(t)$; by (C2),(C3) we have $H(\phi)\in L^\infty(\mathbb R)$ for every $\phi\in\Gamma$.
 		\item Lemma~\ref{Green} (proved earlier) yields $G(\xi)<0$ for all $\xi\in\mathbb R$, hence the convolution operator $-\mathcal L^{-1}$ is order-preserving on $BC(\mathbb R)$.
 	\end{itemize}
 	
 	Let $\overline\phi\in\Gamma$ be a supersolution (the argument for a subsolution is analogous). Define
 	\[
 	F(\overline\phi)(t) := -\mathcal L^{-1}H(\overline\phi)(t) \;=\; -\int_{\mathbb R} G(t-s)\,H(\overline\phi)(s)\,ds.
 	\]
 	We prove that $F(\overline\phi)\in W^{2,\infty}(\mathbb R)$ and $F(\overline\phi)\in\Gamma$.
 	
 	By hypothesis $\overline\phi\in\Gamma\subset BC(\mathbb R,[0,K])$ and by (C2),(C3)
 	the composite $H(\overline\phi)$ is bounded; set $M:=\|H(\overline\phi)\|_\infty<\infty$.
 	Since $G\in L^1(\mathbb R)$, the convolution defining $F(\overline\phi)$ is well defined
 	and satisfies
 	\[
 	\|F(\overline\phi)\|_\infty \le \|G\|_{L^1}\,\|H(\overline\phi)\|_\infty \le \|G\|_{L^1} M.
 	\]
 	
 	Because $G',G''\in L^1(\mathbb R)$ and $H(\overline\phi)\in L^\infty(\mathbb R)$,
 	we may differentiate under the integral sign (dominated convergence). For almost every $t$,
 	\[
 	(F(\overline\phi))'(t) = -\int_{\mathbb R} G'(t-s)\,H(\overline\phi)(s)\,ds,
 	\]
 	\[
 	(F(\overline\phi))''(t) = -\int_{\mathbb R} G''(t-s)\,H(\overline\phi)(s)\,ds.
 	\]
 	Taking suprema and using Young's (convolution) estimate yields the uniform bounds
 	\[
 	\|(F(\overline\phi))'\|_\infty \le \|G'\|_{L^1}\,M,\qquad
 	\|(F(\overline\phi))''\|_\infty \le \|G''\|_{L^1}\,M.
 	\]
 	Hence $F(\overline\phi)\in W^{2,\infty}(\mathbb R)$, as required.
 	
 	Because $\overline\phi\in\Gamma$ is nondecreasing with
 	$\lim_{t\to-\infty}\overline\phi(t)=0$, $\lim_{t\to+\infty}\overline\phi(t)=K$,
 	the history map $t\mapsto\overline\phi_t$ satisfies $\overline\phi_t\to\hat 0$ as $t\to-\infty$
 	and $\overline\phi_t\to\hat K$ as $t\to+\infty$. By continuity of $f_c$ we deduce
 	\[
 	\lim_{t\to-\infty} H(\overline\phi)(t)=H_-=f_c(\hat 0)+\beta\cdot 0=0,
 	\qquad
 	\lim_{t\to+\infty} H(\overline\phi)(t)=H_+=f_c(\hat K)+\beta K=\beta K.
 	\]
 	Since $H(\overline\phi)\in L^\infty$ and $G\in L^1$, dominated convergence gives
 	\[
 	\lim_{t\to\pm\infty} F(\overline\phi)(t)
 	= -\Bigl(\lim_{s\to\pm\infty} H(\overline\phi)(s)\Bigr)\int_{\mathbb R} G(y)\,dy.
 	\]
 	It remains to evaluate $\int_{\mathbb R}G(y)\,dy$. Integrating the identity
 	$\mathcal L G=\delta$ over $\mathbb R$ and using that boundary terms arising from total
 	derivatives vanish because of the exponential decay of $G$ and its derivatives, we obtain
 	\[
 	\int_{\mathbb R} \mathcal L G(y)\,dy = -\beta\int_{\mathbb R} G(y)\,dy = \int_{\mathbb R}\delta(y)\,dy = 1,
 	\]
 	hence
 	\[
 	\int_{\mathbb R} G(y)\,dy = -\frac{1}{\beta}.
 	\]
 	Therefore
 	\[
 	\lim_{t\to-\infty} F(\overline\phi)(t) = -H_- \int G = -0\cdot\Bigl(-\frac{1}{\beta}\Bigr)=0,
 	\]
 	\[
 	\lim_{t\to+\infty} F(\overline\phi)(t) = -H_+ \int G
 	= -(\beta K)\Bigl(-\frac{1}{\beta}\Bigr) = K.
 	\]
 	Thus $F(\overline\phi)$ attains the correct boundary values and so lies in the same
 	range of asymptotic states as $\Gamma$.
 	
 	By hypothesis (C2)/(5.4) the map $H$ is order-preserving on $\Gamma$ (i.e.\ \(\phi\le\psi\)
 	implies \(H(\phi)\le H(\psi)\)). Since $G(\xi)<0$ for all $\xi$, the linear map
 	$h\mapsto -\int G(\cdot-s)h(s)\,ds=-\mathcal L^{-1}h$ is order-preserving on $BC(\mathbb R)$.
 	Hence $F=-\mathcal L^{-1}\circ H$ is order-preserving, so $F(\overline\phi)$ is nondecreasing
 	whenever $\overline\phi$ is. Combining monotonicity with the limits from Step~3 we conclude
 	$F(\overline\phi)\in\Gamma\cap W^{2,\infty}(\mathbb R)$.
 	
 	\medskip
 	The same argument applies to any subsolution $\underline\phi$. This completes the proof of part (i).
 	Parts (ii)--(iii) follow by the standard bootstrap: once $F(\phi)\in W^{2,\infty}$ the history
 	map $t\mapsto (F\phi)_t$ is continuous in $C([-\tau,0])$, so $H(F\phi)$ is continuous and bounded;
 	differentiating one more time under the integral gives $F^2(\phi)\in W^{3,\infty}$, etc.
 \end{proof}

\begin{corollary}\label{cor:existence_Gamma}
	Let $\overline{\phi},\underline{\phi}\in\Gamma$ be a supersolution and a subsolution of \eqref{wave1}, respectively, such that
	\[
	0 \le \underline{\phi}(t) \le \overline{\phi}(t), \qquad \forall\, t \in \mathbb{R}.
	\]
	Assume that conditions \textnormal{(C1)--(C3)} hold and that the linear operator $\mathcal{L}$ is invertible with Green kernel $G$ satisfying the exponential decay estimates used above. Then there exists a monotone travelling wave solution 
	\[
	\phi \in \Gamma
	\]
	to equation~\eqref{wave1}.
\end{corollary}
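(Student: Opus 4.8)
The plan is to reduce the corollary to Theorem~\ref{main1} by first \emph{smoothing} the given supersolution and subsolution into genuine $BC^3$ upper and lower solutions, to which Theorem~\ref{main1} applies verbatim. The only obstruction to invoking Theorem~\ref{main1} directly is regularity: a supersolution/subsolution in the sense of the definitions above need only be continuous with a finite set of jumps in its first derivative, whereas the upper/lower solutions required by Definition~\ref{df5.2} and characterized by Lemma~\ref{lem:upper-lower-simple} must lie in $BC^3(\mathbb R)$. The smoothing is carried out by iterating the order-preserving compact operator $F=-\mathcal L^{-1}H$.

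Concretely, I would first set $\overline\psi:=F^3(\overline\phi)$ and $\underline\psi:=F^3(\underline\phi)$. By Theorem~\ref{smooth1}, each of these lies in $\Gamma\cap BC^3(\mathbb R)$, so in particular they are nondecreasing with the correct limits $0$ and $1$ at $\mp\infty$; moreover that theorem guarantees the ordering is preserved along the iteration, yielding the chain
\[
\underline\phi \le F(\underline\phi)\le F^2(\underline\phi)\le \underline\psi \le \overline\psi \le F^2(\overline\phi)\le F(\overline\phi)\le \overline\phi .
\]
In particular $0\le\underline\psi\le\overline\psi$ on $\mathbb R$.

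Next I would verify that $\overline\psi$ and $\underline\psi$ are genuine $BC^3$ upper and lower solutions. Since $F$ is order-preserving (Lemma~\ref{Green} gives $G<0$, so $-\mathcal L^{-1}$ sends nonnegative data to nonnegative outputs, and $H$ is monotone by (C2)/\eqref{5.4}), applying $F$ to $F^3(\overline\phi)\le F^2(\overline\phi)$ gives $F(\overline\psi)=F^4(\overline\phi)\le F^3(\overline\phi)=\overline\psi$; likewise $F(\underline\psi)=F^4(\underline\phi)\ge F^3(\underline\phi)=\underline\psi$. Because $\overline\psi,\underline\psi\in BC^3(\mathbb R)$, Lemma~\ref{lem:upper-lower-simple} then converts $F\overline\psi\le\overline\psi$ and $F\underline\psi\ge\underline\psi$ into the differential inequalities of Definition~\ref{df5.2}, so $\overline\psi$ is an upper solution and $\underline\psi$ a lower solution of \eqref{wave1}, both now in $\Gamma$.

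Finally, I would apply Theorem~\ref{main1} to the ordered pair $(\overline\psi,\underline\psi)\in\Gamma\times\Gamma$ with $0\le\underline\psi\le\overline\psi$, producing a monotone traveling wave $\phi\in\Gamma$ with $\underline\psi\le\phi\le\overline\psi$ that solves \eqref{wave1}. The step I expect to be the main obstacle is the second one, namely establishing that the smoothed iterates are simultaneously $BC^3$, correctly ordered, and carry the right asymptotics at $\pm\infty$; this is precisely what forces the jump-correction identity of Lemma~\ref{lemma:third_order_cphi3} inside Theorem~\ref{smooth1}, since the sign and exponential decay of the correction terms $cG'(t-t_i)-DG(t-t_i)$ are exactly what guarantee the supersolution inequality survives the passage through the non-smooth points $t_i$. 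Once the regularized ordered pair is in hand, the conclusion is a direct citation of Theorem~\ref{main1}.
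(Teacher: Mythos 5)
Your proposal is correct and follows essentially the same route the paper intends: the corollary is placed immediately after Theorem~\ref{smooth1} precisely so that the iterates $F^3(\overline\phi),F^3(\underline\phi)\in\Gamma\cap BC^3(\mathbb R)$ serve as the regularized ordered pair (with ordering preserved, as Theorem~\ref{smooth1} asserts), after which Lemma~\ref{lem:upper-lower-simple} and Theorem~\ref{main1} yield the fixed point. Your identification of the jump-correction identity in Lemma~\ref{lemma:third_order_cphi3} as the load-bearing step for propagating the super/subsolution inequalities through the non-smooth points matches the paper's own mechanism.
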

 
Once more, there is a difficulty finding sub solutions that lie within $\Gamma.$ Thus, using an idea similar to Theorems \ref{the main2}, \ref{smooth1} and lemmas \ref{diff1}, \ref{lemma:third_order_cphi3}  we establish the following result.

 \begin{lemma}\label{the main3}
Under the standing assumptions (C1) and (C2) if there is an super solution $\overline{\phi} \in \Gamma$ and a sub solution $\underline{\phi}$
that is not necessarily in $\Gamma$ of Eq.(\ref{wave1}) such that for all $ \ t\in \R $
$$
0\le  \underline{\phi}(t)\le\phi\le \overline{\phi}(t) 
$$
and 
$$
\lim_{t\to+\infty} \underline{\phi}(t)=a,
$$ where $0<a\le 1$
Then, there exists a monotone traveling wave solution $\phi$ to Eq.(\ref{wave1}).
\end{lemma}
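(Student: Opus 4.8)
The plan is to run the downward monotone iteration of Theorem~\ref{the main2} starting from the supersolution $\overline\phi\in\Gamma$, to use the subsolution $\underline\phi$ only to pin the lower bound at $+\infty$, and to invoke the smoothing machinery of Theorem~\ref{smooth1} together with the jump identity of Lemma~\ref{lemma:third_order_cphi3} to absorb the corners of the two objects. First I would establish the two ordering relations $F(\overline\phi)\le\overline\phi$ and $\underline\phi\le F(\underline\phi)$, where $F=-\mathcal L^{-1}H$. Because $\overline\phi$ and $\underline\phi$ are allowed to be quasi super/subsolutions, i.e.\ they may have finitely many corner points $t_1<\cdots<t_m$ at which $\phi'$ jumps, these inequalities do not follow at once from the differential inequalities $\mathcal L\phi+H(\phi)\le 0$ (resp.\ $\ge 0$). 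Instead I would apply Lemma~\ref{lemma:third_order_cphi3}, which writes $F(\phi)$ as $\phi$ plus the corrective sum $\sum_{i=1}^m\bigl(cG'(t-t_i)-DG(t-t_i)\bigr)[\![\phi']\!]_{t_i}$, and then verify that the sign of each jump $[\![\phi']\!]_{t_i}$ (nonnegative for a supersolution, nonpositive for a subsolution) combines with the explicit representation and negativity of $G$ from Lemma~\ref{Green} to give the correct inequality. This is the step I expect to be the main obstacle, since the corrective terms must not spoil the order relations.

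Second, with these two inequalities in hand I would set $\phi_n:=F^n(\overline\phi)$. Order-preservation of $F$ (Lemma~\ref{Sch1}) together with $F(\overline\phi)\le\overline\phi$ makes $\{\phi_n\}$ nonincreasing, while the companion sequence $\psi_n:=F^n(\underline\phi)$ is nondecreasing; monotonicity then yields $\underline\phi\le\psi_n\le\phi_n\le\overline\phi$ for every $n$. By the compactness of $F$ on $\Gamma$ (Lemma~\ref{Sch1}) and monotonicity, $\phi_n$ converges in the weighted norm $\|\cdot\|_\mu$ to some $\phi$, and continuity of $F$ gives $F(\phi)=\phi$, i.e.\ $\mathcal L\phi+H(\phi)=0$. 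Applying $F$ three times and invoking the bootstrap of Theorem~\ref{smooth1} promotes $\phi$ to $BC^3(\mathbb R)$, so that $\phi$ is a genuine classical profile.

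Third, I would read off the boundary behaviour. Since $0\le\phi\le\overline\phi$ with $\overline\phi\in\Gamma$, we get $\lim_{t\to-\infty}\phi(t)=0$, and since $\phi$ is nondecreasing and bounded, $k:=\lim_{t\to+\infty}\phi(t)$ exists in $[0,K]$. The sole role of $\underline\phi$ enters here: passing to the pointwise limit in $\phi_n\ge\underline\phi$ gives $\phi\ge\underline\phi$, hence $k\ge\lim_{t\to+\infty}\underline\phi(t)=a>0$, which rules out the degenerate value $k=0$. Finally I would exclude $k\in(0,K)$ exactly as in Theorem~\ref{the main2}: since $\phi\in BC^3$, Lemma~\ref{diff1} and the succeeding remark force $\phi',\phi'',\phi'''\to0$ as $t\to+\infty$, so letting $t\to+\infty$ in $\mathcal L\phi+H(\phi)=0$ and using $H(\phi)(t)\to f_c(\widehat k)+\beta k$ collapses the identity to $f_c(\widehat k)=0$, contradicting (C1) for $0<k<K$. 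Therefore $k=K$, and $\phi\in\Gamma$ is the desired monotone traveling-wave solution connecting the two equilibria. The delicate point throughout remains Step one, where Lemma~\ref{lemma:third_order_cphi3} and the sign information on $G$ are indispensable for preserving the order relations across the corners of the quasi solutions.
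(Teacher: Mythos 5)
Your overall architecture is exactly the one the paper intends: the paper gives no written proof of this lemma, saying only that it follows ``using an idea similar to'' Theorem~\ref{the main2}, Theorem~\ref{smooth1} and Lemmas~\ref{diff1}, \ref{lemma:third_order_cphi3}, and your assembly --- iterate $F$ downward from $\overline\phi$, use $\underline\phi$ solely to force the limit $k=\lim_{t\to+\infty}\phi(t)\ge a>0$, bootstrap to $BC^3$, and exclude $k\in(0,K)$ by letting $t\to+\infty$ in $\mathcal L\phi+H(\phi)=0$ and invoking (C1) --- is precisely the argument of Theorem~\ref{the main2} with the corner-correction of Lemma~\ref{lemma:third_order_cphi3} grafted on. Steps two and three of your outline are sound.

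The genuine gap is the one you yourself flag and then defer: the claim that the corner terms ``combine with the negativity of $G$ to give the correct inequality.'' Carrying out the comparison, a supersolution satisfies $\mathcal L\overline\phi\le -H(\overline\phi)$ off the corners, and multiplying by $G<0$ and integrating gives $F(\overline\phi)(t)\le\overline\phi(t)+B(t)$ with
\[
B(t)=\sum_{i}\bigl(c\,G'(t-t_i)-D\,G(t-t_i)\bigr)\,[\![\overline\phi']\!]_{t_i},
\qquad [\![\overline\phi']\!]_{t_i}\ge 0 .
\]
To conclude $F(\overline\phi)\le\overline\phi$ you therefore need $c\,G'(x)-D\,G(x)\le 0$ for all $x$, and this does \emph{not} follow from Lemma~\ref{Green} alone: since $G<0$ the second summand $-D\,G(x)$ is strictly \emph{positive}, so the inequality requires $c\,G'(x)\le D\,G(x)<0$. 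On $x<0$, where $G(x)=A_3e^{\lambda_3 x}$ with $A_3<0$, one gets $c\,G'-D\,G=A_3e^{\lambda_3 x}(c\lambda_3-D)$, which is nonpositive only when $c\lambda_3\ge D$ --- a condition on $c$, $D$, $\beta$ that is nowhere assumed. A further unaddressed point is that the Definition of super/subsolution permits jumps in $\phi''$ as well, which contribute additional boundary terms of the form $c\,G(t-t_i)[\![\phi'']\!]_{t_i}$ not present in the identity of Lemma~\ref{lemma:third_order_cphi3}. Either you must impose a sign condition guaranteeing $cG'-DG\le0$, or you must restrict to $C^1$ (quasi) super/subsolutions with no jump in $\phi'$ --- as in fact happens in the application of Section~\ref{applications} --- in which case $B\equiv0$ and the ordering $F(\overline\phi)\le\overline\phi$, $\underline\phi\le F(\underline\phi)$ follows directly from $G<0$ and the rest of your argument goes through.
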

\begin{lemma}\label{lem:main3}
	Assume \textnormal{(C1)--(C3)} and that $\mathcal{L}$ is invertible with associated Green kernel $G$ such that $F:=-\mathcal{L}^{-1}H$ is order-preserving and compact on a suitable weighted space. 
	Suppose there exist a supersolution $\overline{\phi}\in\Gamma$ and a subsolution 
	\(
	\underline{\phi} \in BC(\mathbb{R},[0,K])
	\)
	(not necessarily belonging to $\Gamma$) satisfying
	\[
	0 \le \underline{\phi}(t) \le \overline{\phi}(t), \qquad \forall\, t \in \mathbb{R},
	\]
	and
	\[
	\lim_{t\to +\infty} \underline{\phi}(t) = a,
	\qquad \text{for some }\, a \in (0,K].
	\]
	Then there exists a monotone travelling wave solution 
	\[
	\phi \in \Gamma
	\]
	of equation~\eqref{wave1}.
\end{lemma}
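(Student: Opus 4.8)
The plan is to reduce Lemma~\ref{lem:main3} to Theorem~\ref{the main2} by first \emph{regularizing} the (possibly non-smooth) sub/supersolutions into genuine $BC^3$ upper and lower solutions, and then invoking the monotone-iteration existence result already contained in Theorem~\ref{the main2}. The only structural differences between the present hypotheses and those of Theorem~\ref{the main2} are that $\overline\phi,\underline\phi$ are sub/supersolutions in the sense of the preceding definition (so they may carry finitely many jumps in their first derivative) rather than classical $BC^3$ upper/lower solutions, and that $\underline\phi\notin\Gamma$. Both discrepancies will be removed by applying the smoothing operator $F=-\mathcal L^{-1}H$ a fixed number of times.

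First I would set $\overline\psi:=F^3(\overline\phi)$ and $\underline\psi:=F^3(\underline\phi)$. For the supersolution $\overline\phi\in\Gamma$, Theorem~\ref{smooth1} gives directly that $\overline\psi\in\Gamma\cap BC^3(\mathbb R)$ and that the inequality $F(\overline\phi)\le\overline\phi$ propagates, so by Lemma~\ref{lem:upper-lower-simple} the iterate $\overline\psi$ is a classical upper solution. For the subsolution $\underline\phi\in BC(\mathbb R,[0,K])$, I would re-run the bootstrap of Theorem~\ref{smooth1}: its regularity step (differentiating the convolution $-\int_{\mathbb R}G(t-s)H(\underline\phi)(s)\,ds$ under the integral using $G,G',G''\in L^1$) and its order step ($\underline\phi\le F(\underline\phi)$, via Lemma~\ref{lemma:third_order_cphi3}) do not use membership in $\Gamma$. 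Hence $\underline\psi\in BC^3(\mathbb R)\cap BC(\mathbb R,[0,K])$ is a classical lower solution, and monotonicity of $F$ together with the sandwich $\underline\phi\le F(\underline\phi)\le\cdots$ and $F(\overline\phi)\le\overline\phi$ yields $0\le\underline\psi\le\overline\psi\le K$.

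Next I would pin down the asymptotics of $\underline\psi$. Since $0\le\underline\phi\le\overline\phi$ and $\overline\phi\in\Gamma$, squeezing gives $\lim_{t\to-\infty}\underline\phi(t)=0$; together with $f_c(\widehat 0)=0$ from (C1) and dominated convergence, each application of $F$ preserves the left limit, so $\lim_{t\to-\infty}\underline\psi(t)=0$. For the right limit I would use the identity $\int_{\mathbb R}G=-1/\beta$ from the proof of Theorem~\ref{smooth1}: if $\lim_{t\to+\infty}\phi(t)=\ell\in[0,K]$ then $\lim_{t\to+\infty}F(\phi)(t)=-\big(f_c(\widehat\ell)+\beta\ell\big)\int_{\mathbb R}G=\ell+f_c(\widehat\ell)/\beta$. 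As $f_c\ge0$ on $[0,K]$ by (C1), this map is nondecreasing, so iterating three times from $\lim_{t\to+\infty}\underline\phi(t)=a$ gives $\lim_{t\to+\infty}\underline\psi(t)=a'$ with $0<a\le a'\le K$.

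At this stage $\overline\psi\in\Gamma$ is a classical upper solution, $\underline\psi\in BC^3(\mathbb R,[0,K])$ is a classical lower solution satisfying $0\le\underline\psi\le\overline\psi$ and $\lim_{t\to+\infty}\underline\psi(t)=a'\in(0,K]$; these are exactly the hypotheses of Theorem~\ref{the main2}, whose conclusion is a monotone traveling-wave solution $\phi\in\Gamma$ of \eqref{wave1} connecting $0$ and $K$. I expect the main obstacle to be the order-preservation half of Step~1, namely verifying that a single application of $F$ to the subsolution preserves $\underline\phi\le F(\underline\phi)$ in spite of the first-derivative jumps. This rests on controlling the sign of the jump contribution $\sum_i\big(c\,G'(t-t_i)-D\,G(t-t_i)\big)\,[\![\underline\phi']\!]_{t_i}$ furnished by Lemma~\ref{lemma:third_order_cphi3}; the one-sided condition $\underline\phi'(t_i^-)\le\underline\phi'(t_i^+)$ built into the definition of a subsolution is precisely what renders these terms nonnegative. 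Once this first iterate is produced it already lies in $W^{2,\infty}(\mathbb R)$, hence is $C^1$ with no derivative jumps, so the jump sum vanishes in all subsequent applications and the remaining regularity gain to $BC^3$ is routine bootstrapping.
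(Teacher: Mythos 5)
The paper never actually proves this lemma: after the sentence ``using an idea similar to Theorems \ref{the main2}, \ref{smooth1} and lemmas \ref{diff1}, \ref{lemma:third_order_cphi3} we establish the following result,'' the statement is given with no proof environment. Your strategy --- smooth the non-$C^3$ sub/supersolutions by iterating $F$ (Theorem \ref{smooth1}), control the order via the jump identity of Lemma \ref{lemma:third_order_cphi3}, track the limits at $\pm\infty$ using $\int_{\mathbb R}G=-1/\beta$, and then invoke Theorem \ref{the main2} --- is exactly the route the authors gesture at, and the regularity, asymptotics, and reduction steps are carried out correctly.

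There is, however, one genuine gap, precisely at the point you flag as ``the main obstacle'' and then assert away. You need $\underline\phi\le F(\underline\phi)$ for the initial (non-smooth) subsolution, which by Lemma \ref{lemma:third_order_cphi3} reduces to showing that every term $\bigl(c\,G'(t-t_i)-D\,G(t-t_i)\bigr)[\![\underline\phi']\!]_{t_i}$ is nonnegative. The one-sided condition gives $[\![\underline\phi']\!]_{t_i}\le 0$, so you need $c\,G'(\xi)-D\,G(\xi)\le 0$ for all $\xi$, and this is \emph{not} automatic for the third-order Green kernel of Lemma \ref{Green}: for $\xi<0$ one computes $c\,G'(\xi)-D\,G(\xi)=(c\lambda_3-D)A_3e^{\lambda_3\xi}$ with $A_3<0$, whose sign is that of $D-c\lambda_3$ and can be either positive or negative depending on $c,D,\beta$ (and similarly for $\xi>0$, where $G'$ itself changes sign in general). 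So the claim that the one-sided derivative condition ``is precisely what renders these terms nonnegative'' requires an additional hypothesis or an explicit sign analysis of $cG'-DG$ that neither you nor the paper supplies. A second, related issue: the definition of sub/supersolution only requires $\phi''$ to be \emph{bounded} off the $t_i$, not continuous across them, so the integration by parts also produces boundary terms proportional to $G(t-t_i)\bigl(\phi''(t_i^-)-\phi''(t_i^+)\bigr)$ which Lemma \ref{lemma:third_order_cphi3} silently discards; since in the application of Section \ref{applications} the constructed profiles are $C^1$ with kinks in $\phi''$ rather than $\phi'$, these are exactly the terms that matter there. Everything downstream of the inequality $\underline\phi\le F(\underline\phi)$ in your argument is sound, but that inequality itself is the unproved step.
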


%

 \section{Applications}\label{applications}
 
 We illustrate the abstract results with the delayed logistic nonclassical model.
 Consider the PDE with a discrete delay in the reaction term:
 \begin{equation}\label{appPDE}
 	\frac{\partial u(x,t)}{\partial t}
 	= D\frac{\partial^2 u(x,t)}{\partial x^2}
 	+ \alpha\frac{\partial^3 u(x,t)}{\partial x^2\partial t}
 	+ u(x,t-\tau)\bigl(1-u(x,t)\bigr),
 \end{equation}
 with $D>0$, $\alpha\in\mathbb R$ and delay $\tau\ge0$.  In the sequel we set $\alpha=1$.
 Seeking traveling waves $u(x,t)=\phi(\xi)$, $\xi=x+ct$, and writing $r=c\tau$, one obtains the profile equation
 \begin{equation}\label{appODE}
 	c\,\phi'''(\xi) + D\,\phi''(\xi) - c\,\phi'(\xi) + \phi(\xi-r)\bigl(1-\phi(\xi)\bigr)=0.
 \end{equation}
 
 Linearizing \eqref{appODE} at the trivial equilibrium $\phi\equiv 0$ (so that
 $a:=f'(\hat0)=1$) and seeking exponential solutions $\phi(\xi)=e^{\lambda\xi}$ yields the  non delayed characteristic equation
 \begin{equation}\label{char_delay}
 	 \Delta_r(\lambda)=c\lambda^3 + D\lambda^2 - c\lambda + e^{-\lambda r}=0,
 	\qquad \lambda\in\mathbb C,
 \end{equation}
which reduces at $r=0$ to the cubic polynomial
 \[
 \Delta_0(\lambda)=c\lambda^3 + D\lambda^2 - c\lambda + 1.
 \]

\begin{lemma}\label{nondelay root}
 Fix $c>0, D>0$, then $\Delta_0(\lambda)$ has one negative and two positive real roots. Furthermore, if $0<\lambda_1 <\lambda_2$ and there is some $\varepsilon>0$ where $\lambda_1+\varepsilon <\lambda_2$, then $\Delta_0(\lambda_1+\varepsilon)<0$    
\end{lemma}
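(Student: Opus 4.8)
The plan is to read off the root structure from the signs of the coefficients and then refine it by an elementary calculus argument on the real axis. First I would apply Descartes' rule of signs. The coefficient vector of $\Delta_0(\lambda)=c\lambda^3+D\lambda^2-c\lambda+1$ has sign pattern $(+,+,-,+)$, giving two sign changes, so the number of positive real roots counted with multiplicity is either $2$ or $0$. Substituting $-\lambda$ for $\lambda$ produces $-c\lambda^3+D\lambda^2+c\lambda+1$ with sign pattern $(-,+,+,+)$ and a single sign change, so there is exactly one negative real root, say $\mu<0$; this part is robust for all $c,D>0$. Since $\Delta_0$ is a real cubic and $\Delta_0(0)=1\neq 0$, the two remaining roots are either both positive and real or a complex–conjugate pair.

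Next I would decide between these two cases by locating the critical points of $\Delta_0$ on the real line. Here $\Delta_0'(\lambda)=3c\lambda^2+2D\lambda-c$ is a quadratic whose roots have product $-c/(3c)=-\tfrac13<0$; hence $\Delta_0'$ has exactly one positive zero
\[
\lambda_\ast=\frac{-D+\sqrt{D^2+3c^2}}{3c}>0,
\]
and since the leading coefficient $3c$ is positive, $\lambda_\ast$ is a local minimum of $\Delta_0$ while the (negative) critical point is a local maximum. Because $\Delta_0(0)=1>0$, the graph descends from a positive value at the origin to $\Delta_0(\lambda_\ast)$ and then rises to $+\infty$. Therefore $\Delta_0$ has two distinct positive real roots $0<\lambda_1<\lambda_2$ precisely when the local–minimum value is strictly negative, that is, when $\Delta_0(\lambda_\ast)<0$.

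The main obstacle is exactly this sign condition: $\Delta_0(\lambda_\ast)<0$ is \emph{not} automatic for every $c,D>0$. A direct estimate (for instance, in the large-$D$ regime one finds $\lambda_\ast\approx c/(2D)$ and $\Delta_0(\lambda_\ast)\approx 1-c^2/(4D)$) shows that the two positive roots emerge only once $c$ exceeds a critical wave speed $c^\ast(D)$, consistent with the Fisher--KPP threshold $2\sqrt{D}$ to which $\Delta_0$ degenerates in the diffusive limit. Consequently I would either append the hypothesis $c>c^\ast(D)$ (equivalently $\Delta_0(\lambda_\ast)<0$) to the statement, or record the trichotomy explicitly; granting that hypothesis, one obtains the factorization $\Delta_0(\lambda)=c(\lambda-\mu)(\lambda-\lambda_1)(\lambda-\lambda_2)$ with $\mu<0<\lambda_1<\lambda_2$.

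Finally, the last assertion follows immediately from this factorization. For any $\lambda\in(\lambda_1,\lambda_2)$ the three factors $\lambda-\mu$, $\lambda-\lambda_1$, $\lambda-\lambda_2$ carry signs $+,+,-$, so their product is negative and hence $\Delta_0(\lambda)<0$ throughout the open interval $(\lambda_1,\lambda_2)$. Since $0<\varepsilon<\lambda_2-\lambda_1$ forces $\lambda_1+\varepsilon\in(\lambda_1,\lambda_2)$, we conclude $\Delta_0(\lambda_1+\varepsilon)<0$, as claimed.
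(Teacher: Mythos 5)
Your proof of the final inequality is exactly the paper's argument: factor $\Delta_0(\lambda)=c(\lambda-\mu)(\lambda-\lambda_1)(\lambda-\lambda_2)$ and read off the signs $+,+,-$ of the three factors at $\lambda_1+\varepsilon\in(\lambda_1,\lambda_2)$. Where you go further is on the first assertion, and you are right to do so: the paper's proof never establishes that $\Delta_0$ actually has one negative and two positive real roots --- it opens with ``let $\lambda_-$ be the negative root'' and immediately invokes the factor theorem with three real roots, which is precisely the point at issue. Your Descartes'-rule analysis correctly pins down exactly one negative root for all $c,D>0$, and your critical-point computation correctly shows that the two positive roots exist only when the local minimum value $\Delta_0(\lambda_\ast)$ is negative, which fails for some parameter choices. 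A concrete check confirms this: for $c=D=1$ one has $\lambda_\ast=1/3$ and $\Delta_0(1/3)=22/27>0$, so $\Delta_0$ has no positive real roots at all and the lemma's first claim is false as stated. The paper itself tacitly concedes this in Corollary~\ref{cor:app_exist}, which adds the hypothesis ``let $c,D>0$ be such that $\Delta_0$ has two positive real roots.'' So your proposal is not merely correct; it supplies the missing threshold condition (equivalently $c>c^\ast(D)$, consistent with the Fisher--KPP speed $2\sqrt{D}$ in the diffusive limit) that the lemma should carry as a hypothesis and that the paper's own proof silently assumes.
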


\begin{proof} Let $\lambda_{-}$ be the negative root, the by the factor theorem we can write
\[\Delta_0(\lambda)=c(\lambda-\lambda_{-})(\lambda-\lambda_1)(\lambda-\lambda_2).\]
   Now, defining $\varepsilon$ such that $0<\lambda_1<\lambda_1+\varepsilon<\lambda_2,$ we see
   \[(\lambda_1+\varepsilon)-\lambda_{-}>0, \quad (\lambda_1+\varepsilon)-\lambda_1=\varepsilon>0, \quad (\lambda_1+\varepsilon)-\lambda_2<0, \]
   so \[\Delta_0(\lambda_1+\varepsilon)=(\lambda_1+\varepsilon-\lambda_{-})(\lambda_1+\varepsilon-\lambda_{1})(\lambda_1+\varepsilon-\lambda_{2})<0.\]
\end{proof}
 
 The construction of sub- and super-solutions follows the classical template 
 with two modifications:
 (i) the nonlinear term acts on shifted argument $\phi(\cdot-r)$, and (ii) the linearized sign information is read from $\Delta_r$ rather than the polynomial $\Delta_0$.
 
 
 We first record two elementary  lemmas for $\Delta_r$.

 \begin{lemma}\label{root1}
 Fix $c>0, D>0$ and suppose $r>0$ is small, then $\Delta_r(\eta)$ has one negative and two positive real roots. Furthermore, if $0<\eta_1(r) <\eta(r)$ then 
 \[\lim_{r\to 0}\eta_1(r)=\lambda_1, \lim_{r\to 0}\eta_2(r)=\lambda_2.\]
 \end{lemma}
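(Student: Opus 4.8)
The plan is to treat the three relevant roots as analytic perturbations of the \emph{simple} roots of the polynomial $\Delta_0$, and to use a localization argument to control their reality, sign, and limits. By Lemma~\ref{nondelay root} the cubic $\Delta_0$ has three real roots $\lambda_-<0<\lambda_1<\lambda_2$; I will assume, as the factorization in that lemma presupposes, that they are simple, so that $\Delta_0'(\lambda_-),\Delta_0'(\lambda_1),\Delta_0'(\lambda_2)$ are all nonzero. Everything else follows from perturbing these three roots.

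First I would set up the real-analytic map $F(\eta,r):=\Delta_r(\eta)=c\eta^3+D\eta^2-c\eta+e^{-\eta r}$, which satisfies $F(\lambda_*,0)=\Delta_0(\lambda_*)=0$ and $\partial_\eta F(\lambda_*,0)=\Delta_0'(\lambda_*)\neq 0$ for each simple root $\lambda_*\in\{\lambda_-,\lambda_1,\lambda_2\}$. Since $F$ is real-valued on $\mathbb R\times\mathbb R$, the implicit function theorem yields, for $r$ near $0$, unique $C^1$ \emph{real} branches $\eta_-(r),\eta_1(r),\eta_2(r)$ with $\eta_*(0)=\lambda_*$ and $F(\eta_*(r),r)=0$. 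Continuity of these branches immediately gives the required limits $\lim_{r\to0}\eta_1(r)=\lambda_1$ and $\lim_{r\to0}\eta_2(r)=\lambda_2$ (and likewise $\eta_-(r)\to\lambda_-$), and since $\lambda_-<0<\lambda_1<\lambda_2$, shrinking the $r$-neighbourhood preserves $\eta_-(r)<0<\eta_1(r)<\eta_2(r)$, supplying the asserted one negative and two positive roots.

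To guard against missing or overcounting roots I would, in parallel, run a Rouch\'e argument using Theorem~\ref{Rouche}. Around each $\lambda_*$ choose a small disc $B_*\subset\mathbb C$ containing no other zero of $\Delta_0$, so that $|\Delta_0|\ge m_*>0$ on $\partial B_*$. Because $\Delta_r-\Delta_0=e^{-\lambda r}-1\to 0$ uniformly on the compact set $\overline{B_*}$ as $r\to0$, for small $r$ one has $|\Delta_r-\Delta_0|<|\Delta_0|$ on $\partial B_*$, whence $\Delta_r$ has exactly one zero (with multiplicity) in $B_*$, matching $\Delta_0$. Reality then also follows purely analytically from the conjugate symmetry $\overline{\Delta_r(\lambda)}=\Delta_r(\bar\lambda)$, valid since $c,D,r$ are real: a unique simple zero inside a disc centred on the real axis must equal its own conjugate and is therefore real. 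Shrinking the radius of $B_*$ and then the $r$-range reproves $\eta_*(r)\to\lambda_*$.

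The main obstacle is the counting on the \emph{whole} real line, not the local perturbation. Unlike the cubic $\Delta_0$, the transcendental $\Delta_r$ satisfies $\Delta_r(\eta)\to+\infty$ as $\eta\to-\infty$ because $e^{-\eta r}$ dominates the cubic, so the naive ``a cubic has three real roots'' count is unavailable, and in fact $\Delta_r$ acquires an additional real root far out on the negative axis that escapes to $-\infty$ as $r\to0$. I would therefore confine the \emph{exact} count to the region of interest: on $[0,\infty)$ one has $\Delta_r(0)=1>0$, $\Delta_r(\eta)=\Delta_0(\eta)+(e^{-\eta r}-1)$ with $e^{-\eta r}-1\in(-1,0]$ small, and $\Delta_r(\eta)\to+\infty$ as $\eta\to+\infty$; thus $\Delta_r$ stays uniformly close to $\Delta_0$ on compacts and positive for large $\eta$. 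Since $\Delta_0$ dips strictly below zero on $(\lambda_1,\lambda_2)$ and is positive elsewhere on $[0,\infty)$, for $r$ small $\Delta_r$ has exactly the two positive zeros $\eta_1(r),\eta_2(r)$. Accordingly, the phrase ``one negative and two positive real roots'' should be read as the three roots that persist from $\Delta_0$, and I would insert a short remark to this effect so that the statement does not conflict with the extra, dynamically irrelevant, far-negative root.
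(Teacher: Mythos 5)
The paper states Lemma~\ref{root1} with no proof at all (the subsequent Lemma~\ref{lem:continuity_r} merely cites it), so there is no argument of the authors' to compare yours against; judged on its own, your proposal is correct and essentially the standard one. The implicit function theorem at each simple root of $\Delta_0$, reinforced by the Rouch\'e localization (Theorem~\ref{Rouche}) and the conjugation symmetry $\overline{\Delta_r(\lambda)}=\Delta_r(\bar\lambda)$ to force reality of the unique perturbed zero in each disc, gives exactly the persistence and the limits $\eta_j(r)\to\lambda_j$; your global count of positive roots via $0<e^{-\eta r}\le 1$ on $[0,\infty)$ together with uniform convergence on compacts is also sound. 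Two caveats are worth recording explicitly. First, your observation that the literal statement is false is genuine and important: for $r>0$ one has $\Delta_r(\eta)\to+\infty$ as $\eta\to-\infty$ (the term $e^{-\eta r}$ dominates $c\eta^3$), while $\Delta_r$ is negative just to the left of the perturbed root near $\lambda_-$, so $\Delta_r$ necessarily acquires a second, far-out negative real root that escapes to $-\infty$ as $r\to0$; the lemma should be restated as counting only the three roots that continue from $\Delta_0$. Second, your argument (and the lemma itself) inherits the hypothesis that $\Delta_0$ has two \emph{distinct} positive simple roots, which Lemma~\ref{nondelay root} asserts but which in fact fails for some $c,D>0$ (e.g.\ $c=D=1$, where $\Delta_0>0$ on $[0,\infty)$); the standing assumption ``$c,D$ are such that $\Delta_0$ has two positive real roots,'' which the paper only makes explicit in Corollary~\ref{cor:app_exist}, should be imported into the statement of Lemma~\ref{root1} as well.
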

 
 \begin{lemma}\label{lem:continuity_r}
 	Fix $c>0, D>0$ and $r>0$ small enough. Let $ 0<\eta_1<\eta_2$ be the two distinct positive real roots of $\Delta_r$ and choose any $\varepsilon>0$ with $\eta_1+\varepsilon<\eta_2$. Then there exists $r_0>0$ (depending on $c,D,\varepsilon$) such that for all $0\le r\le r_0$ we have
 	\[
 	\Delta_r(\eta_1+\varepsilon)<0.
 	\]
 \end{lemma}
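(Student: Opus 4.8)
The plan is to argue by continuity in the delay parameter $r$, viewing $\Delta_r(\lambda)$ as a jointly continuous function of $(r,\lambda)$ and combining the root convergence supplied by Lemma~\ref{root1} with the strict sign information recorded in Lemma~\ref{nondelay root}. The name of the lemma already signals the mechanism: everything reduces to a single limit as $r\to 0^+$.

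First I would observe that the map $(r,\lambda)\mapsto \Delta_r(\lambda)=c\lambda^3+D\lambda^2-c\lambda+e^{-\lambda r}$ is continuous on $[0,\infty)\times\mathbb{R}$, and moreover $\Delta_r\to\Delta_0$ uniformly on every bounded $\lambda$-interval as $r\to 0^+$, since $e^{-\lambda r}\to 1$ uniformly for $\lambda$ ranging over a fixed compact set. Next I would invoke Lemma~\ref{root1} to get that the smaller positive root $\eta_1=\eta_1(r)$ satisfies $\eta_1(r)\to\lambda_1$ as $r\to 0^+$, where $0<\lambda_1<\lambda_2$ are the two positive roots of the limiting cubic $\Delta_0$. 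Hence the evaluation point moves continuously, $\eta_1(r)+\varepsilon\to\lambda_1+\varepsilon$, and in particular all the perturbed points $\eta_1(r)+\varepsilon$ remain in a single fixed compact $\lambda$-interval once $r$ is small.

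I would then form the scalar quantity $g(r):=\Delta_r\bigl(\eta_1(r)+\varepsilon\bigr)$ and show $g(r)\to\Delta_0(\lambda_1+\varepsilon)$ as $r\to 0^+$. This is where the two convergences combine: on the fixed compact interval containing the arguments one has $\Delta_r\to\Delta_0$ uniformly, so $g(r)-\Delta_0\bigl(\eta_1(r)+\varepsilon\bigr)\to 0$; and continuity of $\Delta_0$ gives $\Delta_0\bigl(\eta_1(r)+\varepsilon\bigr)\to\Delta_0(\lambda_1+\varepsilon)$. By Lemma~\ref{nondelay root}, applied with the chosen $\varepsilon$ so that $\lambda_1+\varepsilon<\lambda_2$, the limiting value $\Delta_0(\lambda_1+\varepsilon)$ is strictly negative. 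Thus $g(r)\to\ell<0$, and by the definition of the limit there exists $r_0>0$ with $g(r)<0$ for all $0\le r\le r_0$, which is precisely the assertion.

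The main obstacle is that the evaluation point $\eta_1+\varepsilon$ itself depends on $r$, so one cannot fix $\lambda$ and appeal to continuity in $r$ alone; the argument must track the moving root, which is exactly the role of Lemma~\ref{root1}. The only technical care is to confine all perturbed roots to a common compact interval so that the uniform convergence $\Delta_r\to\Delta_0$ controls the error term. A minor bookkeeping point is to verify that the admissibility condition $\eta_1(r)+\varepsilon<\eta_2(r)$ persists for small $r$; this again follows from $\eta_1(r)\to\lambda_1$, $\eta_2(r)\to\lambda_2$ and $\lambda_1+\varepsilon<\lambda_2$, so that after shrinking $r_0$ if necessary the hypotheses of the statement remain satisfied throughout $[0,r_0]$.
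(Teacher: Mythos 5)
Your proposal is correct and is exactly the argument the paper intends: the paper's own proof consists of the single sentence ``The proof follows from lemmas \ref{root1} and \ref{nondelay root},'' and your write-up supplies precisely the missing details (uniform convergence $\Delta_r\to\Delta_0$ on compacts, the root convergence $\eta_1(r)\to\lambda_1$, and the strict inequality $\Delta_0(\lambda_1+\varepsilon)<0$). Your closing remark about preserving the admissibility condition $\eta_1(r)+\varepsilon<\eta_2(r)$ for all $r\in[0,r_0]$ is a useful bookkeeping point that the paper omits entirely.
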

 
 \begin{proof} 
 	The proof follows from lemmas \ref{root1} and \ref{nondelay root}. 
 \end{proof}
 
 Fix a small $\varepsilon>0$ and choose $r\in[0,r_0]$ as in Lemma \ref{lem:continuity_r}.

 \begin{lemma}
The nonlinear part of   \eqref{appODE} defined as $f_c(\phi)\phi(-r)=\left(1-\phi(0)\right)$ satisfies $C(1)-C(3).$    
 \end{lemma}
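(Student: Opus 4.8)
The plan is to verify the three structural hypotheses directly for the delayed logistic reaction. In the notation of Section~\ref{Main Results} the history functional is $f(\psi)=\psi(-\tau)\bigl(1-\psi(0)\bigr)$, so after the rescaling $\psi^{c}(\theta)=\psi(c\theta)$ with $r=c\tau$ one has $f_c(\psi)=\psi(-r)\bigl(1-\psi(0)\bigr)$, and the relevant order interval is $[0,K]$ with $K=1$. Throughout I would fix $\beta\ge K=1$, a choice that will turn out to be exactly what the quasi-monotone requirement forces. Condition (C1) is then immediate by evaluation on constant histories: $f(\hat0)=0\cdot(1-0)=0$, $f(\hat1)=1\cdot(1-1)=0$, while $f(\hat u)=u(1-u)>0$ for $0<u<1$; thus $0$ and $1$ are the only equilibria, with $f$ strictly positive in between.

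For (C2) I would first establish the Lipschitz bound by expanding the difference of the two products. Writing $a_1=\phi(-r)$, $a_0=\phi(0)$, $b_1=\psi(-r)$, $b_0=\psi(0)$, one has the algebraic identity
\[
f_c(\phi)-f_c(\psi)=(a_1-b_1)(1-a_0)-b_1(a_0-b_0),
\]
so that on histories valued in $[0,1]$ this yields $|f_c(\phi)-f_c(\psi)|\le 2\|\phi-\psi\|_\infty$, i.e.\ $L=2$. For the quasi-monotone inequality \eqref{5.4} I would apply the same identity pointwise: for $\psi\le\phi$ with values in $[0,1]$,
\[
H(\phi)(\xi)-H(\psi)(\xi)=\bigl(\phi(\xi)-\psi(\xi)\bigr)\bigl(\beta-\psi(\xi-r)\bigr)+\bigl(\phi(\xi-r)-\psi(\xi-r)\bigr)\bigl(1-\phi(\xi)\bigr),
\]
where $H=f_c+\beta\,\phi$. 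Each of the four factors is nonnegative as soon as $\beta\ge1$, whence $H(\psi)\le H(\phi)$; that is, $H$ is order-preserving on $[0,K]$-valued functions, which is precisely the content of \eqref{5.4} and of the preliminary monotonicity lemma.

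Finally, for (C3): continuity of $H$ from $BC(\mathbb R,[0,1])$ into $BC(\mathbb R,\mathbb R)$ is immediate from the (local) Lipschitz bound just obtained, and the uniform bound follows from order-preservation together with the equilibrium values. Indeed, any $\phi\in\Gamma$ satisfies $\hat0\le\phi_\xi\le\hat1$ for every $\xi$, so monotonicity of $H$ gives $0=H(\hat0)\le H(\phi)(\xi)\le H(\hat1)=\beta K$, and hence $\sup_{\phi\in\Gamma}\|H(\phi)\|_\infty\le\beta K<\infty$.

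The main obstacle is that $f_c$ is quadratic and therefore not globally Lipschitz on all of $C([-\tau,0],\mathbb R)$, so the estimates in (C2)--(C3) are genuinely valid only on the order interval $[0,1]$. I would dispose of this by noting that every history arising from $\phi\in\Gamma$ takes values in $[0,1]$, and, should a globally defined Lipschitz $H$ be wanted, by replacing $f_c$ with its truncation off $[0,1]$, which leaves the dynamics on $\Gamma$ unchanged. The only other delicate point is the threshold for $\beta$: the factorization above shows that order-preservation can fail when $\beta<1$, so the admissible range $\beta\ge K=1$ is sharp and must be recorded as part of the verification.
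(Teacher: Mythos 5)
Your verification is correct and follows essentially the same route as the paper's: the only substantive step is the quasimonotonicity inequality \eqref{5.4}, which both you and the paper obtain by the same add-and-subtract decomposition of the product $\phi(-r)\bigl(1-\phi(0)\bigr)$, arriving at the identical threshold $\beta\ge 1$. Your extra remark that the quadratic nonlinearity is Lipschitz only on $[0,1]$-valued histories (not on all of $C([-\tau,0],\mathbb R)$ as (C2) literally demands) is a legitimate point the paper passes over as ``trivial,'' but restricting to the order interval, as you do, is exactly the setting in which the iteration scheme operates, so nothing is lost.
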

 \begin{proof} $C1$, $C3$ and the Lipschitz portion of $C2$ are trivial. Thus, we show the second part of $C2$. Indeed, take $\phi_1, \phi_2 \in C([-r,0], [0,1]), \quad \phi_1\ge \phi_2$ we see 
 \[\begin{split}
 & f_c(\phi_1)-f_c(\phi_2)=\phi_1(-r)\left(1-\phi_1(0)\right)-\phi_2(-r)\left(1-\phi_2(0)\right)  \\
 &\ge \phi_1(-r)\left(1-\phi_1(0)\right)-\phi_1(-r)\left(1-\phi_2(0)\right)\\
 &=\phi_1(-r)\left[1-\phi_1(0)-\left(1-\phi_2(0)\right)\right]\\
 &=-\phi_1(-r)\left(\phi_1(0)-\phi_2(0)\right)\ge -\left(\phi_1(0)-\phi_2(0)\right).
 \end{split}\]
 Taking $\beta\ge 1$ gives the result. 
 \end{proof}
 We now construct the a pair of piecewise exponential quasi upper- and quasi lower-solutions.
 
 \medskip\noindent
 \textbf{Quasi upper-solution.} Define
 \[
 \overline{\phi}(\xi):=
 \begin{cases}
 	\frac{1}{2}e^{\eta_1 \xi}, & \xi<0,\\[4pt]
 	1-\frac{1}{2}e^{-\eta_1 \xi}, & \xi\ge0,
 \end{cases}
 \]
 where $\eta_1$ is the smallest positive root of $\Delta_1(\eta)=0$. Since $\eta_1>0$ we have $\overline\phi\in\Gamma$ and $\overline\phi$ is $C^\infty$ on each side of $\xi=0$ and $C^1$ at  $\xi=0$.
 
 \begin{lemma}\label{lem:super}
 	For $r\in[0,r_0]$ as above and for $c$ chosen so that $\Delta_r$ has the two positive roots, the function $\overline\phi$ is a supersolution of \eqref{appODE}, i.e.
 	\[
 	c\,\overline\phi'''(\xi) + D\,\overline\phi''(\xi) - c\,\overline\phi'(\xi) + \overline\phi(\xi)\bigl(1-\overline\phi(\xi-r)\bigr)\le 0
 	\]
 	for all $\xi\in\mathbb R$ (understood in the usual piecewise sense).
 \end{lemma}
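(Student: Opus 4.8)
The plan is to verify the differential inequality by direct, piecewise substitution of the explicit profile $\overline\phi$, reducing each case to an algebraic inequality in a single exponential variable. First I would record the one-sided derivatives: for $\xi<0$ one has $\overline\phi^{(k)}(\xi)=\tfrac12\eta_1^{\,k}e^{\eta_1\xi}$, while for $\xi>0$ the function $1-\tfrac12 e^{-\eta_1\xi}$ gives $\overline\phi'=\tfrac12\eta_1 e^{-\eta_1\xi}$, $\overline\phi''=-\tfrac12\eta_1^2 e^{-\eta_1\xi}$, $\overline\phi'''=\tfrac12\eta_1^3 e^{-\eta_1\xi}$. A one-line check shows $\overline\phi'$ is continuous at $0$ (no jump) whereas $\overline\phi''$ jumps downward by $\eta_1^2$; hence $\overline\phi\in BC^1(\mathbb{R})$ with $\overline\phi'',\overline\phi'''$ bounded and absolutely continuous off the single exceptional point $t_1=0$, so $\overline\phi$ satisfies the regularity and jump requirements of the (quasi) supersolution definition, and $\overline\phi\in\Gamma$ is immediate from $\eta_1>0$ and monotonicity. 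Because the delayed argument enters only through the undifferentiated factor $1-\overline\phi(\xi-r)$, the operator is smooth away from $\xi=0$, so I split $\mathbb{R}$ by the signs of $\xi$ and $\xi-r$ into three regions: (A) $\xi<0$, (B) $0\le\xi<r$, and (C) $\xi\ge r$.

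On region A both arguments lie on the lower exponential branch. Writing $P(\eta):=c\eta^3+D\eta^2-c\eta$ so that $c\,\overline\phi'''+D\,\overline\phi''-c\,\overline\phi'=\tfrac12 P(\eta_1)e^{\eta_1\xi}$, and expanding the reaction as $\tfrac12 e^{\eta_1\xi}\bigl(1-\tfrac12 e^{\eta_1(\xi-r)}\bigr)$, the entire left-hand side of \eqref{appODE} collapses to $\tfrac12 e^{\eta_1\xi}\bigl[\Delta_0(\eta_1)-\tfrac12 e^{\eta_1(\xi-r)}\bigr]$, where $\Delta_0(\eta)=P(\eta)+1$ is the \emph{nondelayed} cubic. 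The decisive observation is that the lower tail is governed by $\Delta_0$ rather than $\Delta_r$, precisely because the delayed factor $1-\overline\phi(\xi-r)$ tends to $1$ there; accordingly one must take $\eta_1$ to be the smaller positive root $\lambda_1$ of $\Delta_0$ furnished by Lemma~\ref{nondelay root}, so that $\Delta_0(\eta_1)=0$ and the bracket reduces to the strictly negative $-\tfrac12 e^{\eta_1(\xi-r)}$, settling region A. Region C is the mirror computation: with $\tilde P(\eta):=c\eta^3-D\eta^2-c\eta$ and the identity $\tilde P(\eta_1)=-1-2D\eta_1^2$ (derived from $\Delta_0(\eta_1)=0$), the left-hand side becomes $\tfrac12 e^{-\eta_1\xi}\bigl(\tilde P(\eta_1)+e^{\eta_1 r}\bigr)-\tfrac14 e^{\eta_1 r}e^{-2\eta_1\xi}$, which is $\le 0$ as soon as $e^{\eta_1 r}\le 1+2D\eta_1^2$, i.e.\ for $r$ below the explicit threshold $\tfrac{1}{\eta_1}\ln(1+2D\eta_1^2)$.

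The main obstacle is region B, $0\le\xi<r$, where $\overline\phi(\xi)=1-\tfrac12 e^{-\eta_1\xi}$ already sits on the upper branch while $\overline\phi(\xi-r)=\tfrac12 e^{\eta_1(\xi-r)}$ is still on the lower branch, so no single characteristic identity collapses the expression. Here I would substitute directly and, after inserting $\tilde P(\eta_1)=-1-2D\eta_1^2$, obtain the expression $-(1+D\eta_1^2)e^{-\eta_1\xi}+1-\tfrac12 e^{\eta_1(\xi-r)}+\tfrac14 e^{-\eta_1 r}$. Since on $[0,r)$ one has $e^{-\eta_1\xi}\ge e^{-\eta_1 r}$ and $e^{\eta_1(\xi-r)}\ge e^{-\eta_1 r}$, replacing each exponential by its least favorable value yields the uniform estimate $1-e^{-\eta_1 r}\bigl(\tfrac54+D\eta_1^2\bigr)$, which is $\le 0$ whenever $e^{-\eta_1 r}\ge 4/(5+4D\eta_1^2)$, giving a second explicit threshold; taking $r_0$ to be the minimum of the two thresholds then closes all three regions simultaneously. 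I expect region B to be the delicate step, both because the crude monotone bounds must be shown to suffice across the whole interval and because it is there that the smallness hypothesis $r\le r_0$ is genuinely consumed. A secondary point worth stating explicitly is the reconciliation of the decay rate $\eta_1$ with the correct characteristic: since the delay acts on the factor $1-\overline\phi(\xi-r)$ that converges to $1$ as $\xi\to-\infty$, it is the nondelayed cubic $\Delta_0$ — and hence Lemma~\ref{nondelay root} (with its $r$-uniform companion Lemma~\ref{lem:continuity_r}) — that supplies the sign information controlling the lower tail, not the delayed function $\Delta_r$.
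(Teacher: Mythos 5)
Your three-region decomposition ($\xi<0$, $0\le\xi<r$, $\xi\ge r$) coincides with the paper's Cases 1--3 and your algebra is correct, but you and the paper are not verifying the same inequality. The displayed inequality in the lemma puts the delayed argument in the factor $1-\overline\phi(\xi-r)$, whereas the profile equation \eqref{appODE} --- of which $\overline\phi$ is claimed to be a supersolution --- carries it in the first factor, $\overline\phi(\xi-r)\bigl(1-\overline\phi(\xi)\bigr)$, and the paper's own proof works with the latter. You took the display literally, correctly observed that the lower tail is then governed by the nondelayed cubic $\Delta_0$, and were therefore forced to redefine $\eta_1$ as the smaller positive root $\lambda_1$ of $\Delta_0$ (Lemma~\ref{nondelay root}); indeed, with the paper's $\eta_1$ (the root of $\Delta_r$) your Region-A bracket would be $\Delta_0(\eta_1)-\tfrac12 e^{\eta_1(\xi-r)}=1-e^{-\eta_1 r}-\tfrac12 e^{\eta_1(\xi-r)}$, which is strictly positive as $\xi\to-\infty$, so the statement as printed genuinely fails in the tail --- the display is a typo. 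The paper instead proves the \eqref{appODE}-consistent inequality: in its Case~1 the identity $\Delta_r(\eta_1)=0$ cancels everything except $-\overline\phi(\xi)\overline\phi(\xi-r)\le0$, and its Cases~2--3 are closed by letting $r\to0$ and invoking continuity (nonconstructive thresholds $r^*,r^{**}$), where your crude monotone bounds on $[0,r)$ buy explicit thresholds $r\le\tfrac{1}{\eta_1}\ln\bigl(1+2D\eta_1^2\bigr)$ and $r\le\tfrac{1}{\eta_1}\ln\bigl(\tfrac54+D\eta_1^2\bigr)$, which is genuinely sharper, quantitative information.

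The caveat is that, as a proof of the lemma in the role it plays in the paper, your redefinition of $\eta_1$ is substantive, not cosmetic. The companion subsolution of Lemma~\ref{lem:sub} is built on the root of the delayed characteristic (its Case~1 uses $\Delta_r(\eta_1)=0$), and since $\Delta_r(\lambda)-\Delta_0(\lambda)=e^{-\lambda r}-1<0$ on $(0,\infty)$ one has $\Delta_r(\lambda_1)<0$, hence $\eta_1(r)<\lambda_1$; with your supersolution the required ordering $\underline\phi\le\overline\phi$ would then fail as $\xi\to-\infty$, because $e^{\eta_1(r)\xi}\gg e^{\lambda_1\xi}$ there, and Corollary~\ref{cor:app_exist} could not be invoked. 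So you should either keep the paper's $\eta_1$ and prove the \eqref{appODE}-form of the inequality --- your Region-A computation adapts immediately, the bracket becoming exactly $\Delta_r(\eta_1)=0$, with analogous corrections in Regions B and C --- or rebuild the subsolution around $\lambda_1$. In short: your proof is the only way to save the statement as literally written, and it is correct for that reading, but it does not as it stands deliver the supersolution that the paper's existence argument consumes.
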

 
 \begin{proof}
 It is clear that 
 \[\lim_{\xi\to -\infty}\overline\phi(\xi)=0, \quad \lim_{\xi\to \infty}\overline\phi(\xi)=1.\]
 Thus, we only have to show the inequality. To this end, we look at the following cases. 
 \textbf{Case 1}	
 $\xi<0:$ we have $\overline\phi(\xi-r)=\frac{e^{\eta_1(\xi-r)}}{2}$. Using $\Delta_r(\eta_1)=0$ we compute
 	\[
 	c\overline\phi'''(\xi) + D\overline\phi''(\xi) - c\overline\phi'(\xi) +\overline\phi(\xi-r) = \Delta_r(\eta_1)e^{\eta_1\xi}=0,
 	\]
 	hence
 	\[
 	\begin{split}
 		&c\overline\phi''' + D\overline\phi'' - c\overline\phi' + \overline\phi(\xi-r)\bigl(1-\overline\phi(\xi)\bigr)\\
 		&\qquad= -\overline\phi(\xi)\overline\phi(\xi-r)\le 0.
 	\end{split}
 	\]

 \textbf{Case 2}	
 $0<\xi<r:$ we have $\overline\phi(\xi-r)=\frac{e^{\eta_1(\xi-r)}}{2},\overline\phi(\xi)=1-\frac{e^{-\eta_1\xi}}{2}. $

 	\[
 	\begin{split}
 		A&=c\overline\phi''' + D\overline\phi'' - c\overline\phi' + \overline\phi(\xi-r)\bigl(1-\overline\phi(\xi)\bigr)\\
 		&\qquad=\frac{e^{-\eta_1 \xi}}{2} \left[ \left(c\eta_1^3 - D\eta_1^2 - c\eta_1 \right)  +\frac{e^{\eta_1(\xi - r)}}{2}\right] \\ .
 	\end{split}
 	\]

Since $A(r)$ is continuous and $\xi\in[0,r)$ we see,

	\[
 	\begin{split}
 		\lim_{r\to 0} A&=\lim_{r\to 0}  \frac{e^{-\eta_1 \xi}}{2} \left[ \left(c\eta_1^3 - D\eta_1^2 - c\eta_1 \right)  +\frac{e^{\eta_1(\xi - r)}}{2}\right]\\
        &=\frac{1}{2}\left(c\lambda_1^3 - D\lambda_1^2 - c\lambda_1 +\frac{1}{2}\right)\\
        &=\frac{1}{2}\left(c\lambda_1^3 - D\lambda_1^2+\left(-c\lambda_1^3-D\lambda_1^2-1\right)+\frac{1}{2}\right)\leq-\frac{1}{4}<0.
 	\end{split}
 	\]
    Thus, there exists some $r^*>0$ such that for all $r\in(0,r^*],$ then $A(r)\le 0.$

\textbf{Case 3} $\xi\ge r.$ We use the fact that $\overline\phi(\xi)$ is non decreasing to see
\[
\begin{split}
 A&=c\overline\phi''' + D\overline\phi'' - c\overline\phi' + \overline\phi(\xi-r)\bigl(1-\overline\phi(\xi)\bigr)\\
 &\le c\overline\phi''' + D\overline\phi'' - c\overline\phi' +\bigl(1-\overline\phi(\xi)\bigr)\\
 &=\frac{1}{2} e^{-\eta_1 \xi} \left( c\eta_1^3 - D\eta_1^2 - c\eta_1 + 1 \right)\\
 &=\frac{1}{2} e^{-\eta_1 \xi} \left( c\eta_1^3 - D\eta_1^2 -\left(c\eta_1^3+D\eta_1^2+e^{-\eta_1 r}\right) + 1 \right)\\
 &=\frac{1}{2} e^{-\eta_1 \xi}\left(-2D\eta_1^2+1-e^{-\eta_1 r}\right).
\end{split}
\]
Using a limit argument as case 2 we see,

\[
\begin{split}
 \lim_{r\to 0}A&=\lim_{r\to 0} c\overline\phi''' + D\overline\phi'' - c\overline\phi' + \overline\phi(\xi-r)\bigl(1-\overline\phi(\xi)\bigr)\\
 &\le \lim_{r\to 0}\frac{1}{2} e^{-\eta_1 \xi}\left(-2D\eta_1^2+1-e^{-\eta_1 r}\right)\\
 &=-D e^{-\lambda_1 \xi}<0.\\
\end{split}
\]
 Thus, there exists some $r^{**}>0$ such that for all $r\in(0,r^{**}],$ then $A(r)\le 0.$
\end{proof}
 \noindent
 \textbf{Quasi lower-solution.} Choose $q>0$ large (to be fixed below) and define
 \[
 \underline{\phi}(\xi):=
 \begin{cases}
 	\dfrac{1}{2}\bigl(1-q e^{\varepsilon\xi}\bigr)e^{\eta_1\xi}, & \xi<\xi_1,\\[6pt]
 	d_1:=\dfrac{1}{2}\bigl(1-q e^{\varepsilon\xi_1}\bigr)e^{\eta_1\xi_1}, & \xi\ge\xi_1,
 \end{cases}
 \]
 with $\xi_1<0$ chosen so that $0<d_1<1$ (possible for suitable $q,\varepsilon$). Note that $\underline\phi$ is continuous and $C^1$.
 
 \begin{lemma}\label{lem:sub}
 	With $\varepsilon>0$ as in Lemma \ref{lem:continuity_r} and for $r\in[0,r_0]$ sufficiently small, there exists $q>0$ and $\xi_1<0$ such that $\underline\phi$ is a subsolution of \eqref{appODE}, i.e.
 	\[
 	c\,\underline\phi'''(\xi) + D\,\underline\phi''(\xi) - c\,\underline\phi'(\xi) + \underline\phi(\xi)\bigl(1-\underline\phi(\xi-r)\bigr)\ge 0
 	\]
 	for all $\xi\in\mathbb R$ (in the piecewise sense).
 \end{lemma}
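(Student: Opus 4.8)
The plan is to fix the two free parameters $q$ and $\xi_1$ first, and then verify the stated inequality
\[
A(\xi):=c\,\underline\phi'''(\xi)+D\,\underline\phi''(\xi)-c\,\underline\phi'(\xi)+\underline\phi(\xi)\bigl(1-\underline\phi(\xi-r)\bigr)\ge 0
\]
separately on the two regions $\xi<\xi_1$ and $\xi>\xi_1$ cut out by the single breakpoint. Since a quasi lower solution must lie in $BC^1(\mathbb R)$ and $\underline\phi\equiv d_1$ is constant for $\xi\ge\xi_1$, continuity of $\underline\phi'$ forces $\underline\phi'(\xi_1^-)=0$; as the exponential branch $\underline\phi(\xi)=\tfrac12 e^{\eta_1\xi}-\tfrac q2 e^{(\eta_1+\varepsilon)\xi}$ has a unique critical point, I would set
\[
\xi_1:=\frac1\varepsilon\ln\frac{\eta_1}{q(\eta_1+\varepsilon)},\qquad d_1=\underline\phi(\xi_1)=\frac{\varepsilon}{2(\eta_1+\varepsilon)}\,e^{\eta_1\xi_1}\in\Bigl(0,\tfrac12\Bigr).
\]
Then $qe^{\varepsilon\xi_1}=\eta_1/(\eta_1+\varepsilon)<1$, so $\underline\phi$ is genuinely $C^1$ (only $\underline\phi''$ and $\underline\phi'''$ jump at $\xi_1$) and nondecreasing, and crucially $\xi_1\to-\infty$ as $q\to\infty$ — this is the precise sense in which $q$ must be taken large. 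I keep $\varepsilon$ as in Lemma~\ref{lem:continuity_r}, with the mild extra requirement $\lambda_1+\varepsilon<\lambda_2$, and shrink $r_0$ so that $\Delta_0(\eta_1+\varepsilon)<0$ for all $r\in[0,r_0]$; this holds for small $r$ because $\eta_1(r)\to\lambda_1$ (Lemma~\ref{root1}) and $\Delta_0(\lambda_1+\varepsilon)<0$ (Lemma~\ref{nondelay root}).

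On the region $\xi<\xi_1$ both $\xi$ and $\xi-r$ lie on the exponential branch. Writing $\mathcal L_0:=c\,\partial^3+D\,\partial^2-c\,\partial$ and using $c\mu^3+D\mu^2-c\mu=\Delta_0(\mu)-1$, a direct substitution gives the key identity
\[
\mathcal L_0\underline\phi(\xi)+\underline\phi(\xi)=\tfrac12\,\Delta_0(\eta_1)\,e^{\eta_1\xi}-\tfrac q2\,\Delta_0(\eta_1+\varepsilon)\,e^{(\eta_1+\varepsilon)\xi}.
\]
The delayed root relation $\Delta_r(\eta_1)=0$ yields $\Delta_0(\eta_1)=1-e^{-\eta_1 r}>0$, while $\Delta_0(\eta_1+\varepsilon)<0$ by the choice of $\varepsilon,r_0$; thus both coefficients are strictly positive. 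The remaining reaction piece is $-\underline\phi(\xi)\underline\phi(\xi-r)$, which is $O(e^{2\eta_1\xi})$ and satisfies $0<e^{-\eta_1\xi}\underline\phi(\xi)\underline\phi(\xi-r)<\tfrac14 e^{\eta_1\xi}$. Factoring out $e^{\eta_1\xi}>0$ and discarding the nonnegative subleading term, I would bound
\[
A(\xi)\ge e^{\eta_1\xi}\Bigl[\tfrac12\bigl(1-e^{-\eta_1 r}\bigr)-\tfrac14\,e^{\eta_1\xi_1}\Bigr],\qquad \xi<\xi_1,
\]
and then enlarge $q$ until $\tfrac14 e^{\eta_1\xi_1}<\tfrac12(1-e^{-\eta_1 r})$, which is possible because $e^{\eta_1\xi_1}\to0$ as $q\to\infty$. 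Hence $A(\xi)>0$ for all $\xi<\xi_1$.

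On the region $\xi>\xi_1$ one has $\underline\phi\equiv d_1$, so $\underline\phi'=\underline\phi''=\underline\phi'''=0$ and $A(\xi)=d_1\bigl(1-\underline\phi(\xi-r)\bigr)$. Because $\underline\phi(s)=\tfrac12(1-qe^{\varepsilon s})e^{\eta_1 s}$ is either negative or bounded above by $\tfrac12 e^{\eta_1 s}<\tfrac12$ for $s<\xi_1<0$, and equals $d_1<1$ on the constant branch, one has $\underline\phi<1$ everywhere; combined with $0<d_1<1$ this gives $A(\xi)>0$. The two regions together establish the inequality off the single breakpoint $\xi_1$, i.e.\ in the piecewise sense. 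Finally $\lim_{\xi\to-\infty}\underline\phi=0$ and $\lim_{\xi\to+\infty}\underline\phi=d_1=:a\in(0,1)$, with $\underline\phi\in BC^1(\mathbb R)$ and $\underline\phi'',\underline\phi'''$ bounded away from $\xi_1$, so $\underline\phi$ is an admissible quasi lower solution of the type required by Lemma~\ref{lem:main3}.

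I expect the genuine difficulty to be the parameter coupling forced by the $C^1$ matching, together with a sign condition. The construction is $C^1$ only when $\xi_1$ is the critical point of the exponential branch, which ties $\xi_1$ to $q$; enlarging $q$ pushes $\xi_1\to-\infty$, and this is exactly what suppresses the quadratic remainder $\underline\phi(\xi)\underline\phi(\xi-r)$ near $\xi_1$. However, because the stated nonlinearity places the delay in the competition factor rather than in the growth factor, the leading coefficient after factoring $e^{\eta_1\xi}$ is only $\tfrac12(1-e^{-\eta_1 r})=O(r)$, whereas at $\xi=\xi_1$ the subleading contribution in the bracket equals the $q$-independent quantity $-\tfrac{\eta_1}{2(\eta_1+\varepsilon)}\Delta_0(\eta_1+\varepsilon)$. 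Large $q$ cannot shrink this $O(1)$ term, so the proof hinges on it having the favorable sign, i.e.\ on $\Delta_0(\eta_1+\varepsilon)<0$, allowing it to be discarded rather than fought. Establishing $\Delta_0(\eta_1+\varepsilon)<0$ uniformly on $r\in[0,r_0]$ via the $r\to0$ transfer from $\Delta_r$ is the step I would handle most carefully.
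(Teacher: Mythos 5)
Your construction coincides with the paper's piecewise-exponential profile, and your explicit choice $\xi_1=\varepsilon^{-1}\ln\bigl(\eta_1/(q(\eta_1+\varepsilon))\bigr)$ --- the critical point of the exponential branch --- is actually \emph{more} careful than the paper, which merely asserts that $\underline\phi$ is $C^1$ and nondecreasing; your choice is exactly what makes those assertions true. The substantive divergence is in which form of the nonlinearity is expanded and which positive term carries the estimate. The paper's proof works with the form in \eqref{appODE}, $\underline\phi(\xi-r)\bigl(1-\underline\phi(\xi)\bigr)$ (the statement's $\underline\phi(\xi)\bigl(1-\underline\phi(\xi-r)\bigr)$ appears to be a typo: the proofs of both Lemma~\ref{lem:super} and Lemma~\ref{lem:sub} use the \eqref{appODE} form). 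With the delay in the growth factor the leading term cancels exactly via $\Delta_r(\eta_1)=0$; the paper then bounds the quadratic remainder by $\tfrac14 e^{(\eta_1+\varepsilon)\xi}$ (valid for $\xi<0$ once $\varepsilon\le\eta_1$) and \emph{retains} the subleading term $-\tfrac{q}{2}\Delta_r(\eta_1+\varepsilon)e^{(\eta_1+\varepsilon)\xi}$ to absorb it, arriving at the $r$-independent choice $q\ge\max\bigl\{1,\,-1/(2\Delta_0(\eta_1+\varepsilon))\bigr\}$. You, faithfully taking the statement as printed, obtain the leading coefficient $\Delta_0(\eta_1)=1-e^{-\eta_1 r}$, discard the positive $q$-term, and defeat the $O(e^{2\eta_1\xi})$ remainder with this $O(r)$ coefficient by pushing $\xi_1\to-\infty$. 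Your transfer of the sign condition $\Delta_0(\eta_1+\varepsilon)<0$ via Lemmas~\ref{root1} and~\ref{nondelay root}, your constant-branch cases, and the limits qualifying $\underline\phi$ for Lemma~\ref{lem:main3} all match the paper and are sound.

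There is, however, one concrete defect in your key estimate on $\xi<\xi_1$: the required inequality $\tfrac14 e^{\eta_1\xi_1}<\tfrac12\bigl(1-e^{-\eta_1 r}\bigr)$ is unachievable at $r=0$ (the right side vanishes), and your admissible $q$ blows up as $r\downarrow 0$, whereas the lemma is asserted for all $r\in[0,r_0]$. The repair is free and already contained in your own computation: do not discard $-\tfrac{q}{2}\Delta_0(\eta_1+\varepsilon)e^{(\eta_1+\varepsilon)\xi}$. Taking $\varepsilon\le\eta_1$, for $\xi<\xi_1<0$ one has
\[
\underline\phi(\xi)\,\underline\phi(\xi-r)\le\tfrac14 e^{2\eta_1\xi}\le\tfrac14 e^{(\eta_1+\varepsilon)\xi},
\]
so $q\ge -1/\bigl(2\Delta_0(\eta_1+\varepsilon)\bigr)$ makes the bracket nonnegative uniformly in $r\in[0,r_0]$, recovering the paper's estimate; your observation that the $O(1)$ subleading contribution at $\xi_1$ cannot be shrunk by large $q$ is correct, but the right conclusion is that this term should be kept as the dominant positive term, not merely discarded as harmless. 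With that single modification your argument is complete and, for $r>0$, your original version also goes through as written.
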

 
 \begin{proof}
 We notice that $ \underline\phi(\xi)$ is non decreasing on the whole real line so, $ \underline\phi(\xi) \underline\phi(\xi-r)\le \underline\phi^2(\xi),$ so we see
 \[\begin{split}
  &B=	c\,\underline\phi'''(\xi) + D\,\underline\phi''(\xi) - c\,\underline\phi'(\xi) + \underline\phi(\xi-r)\bigl(1-\underline\phi(\xi)\bigr) \\
 & \ge c\,\underline\phi'''(\xi) + D\,\underline\phi''(\xi) - c\,\underline\phi'(\xi) + \underline\phi(\xi-r)-\underline\phi^2(\xi).
 \end{split}\]
Again, we show the inequalities in cases. 
 \textbf{Case 1} $\xi<\xi_1$
\[
\begin{split}
&B\ge c\,\underline\phi'''(\xi) + D\,\underline\phi''(\xi) - c\,\underline\phi'(\xi) + \underline\phi(\xi-r)-\underline\phi^2(\xi)\\
 & \ge c\,\underline\phi'''(\xi) + D\,\underline\phi''(\xi) - c\,\underline\phi'(\xi) + \underline\phi(\xi-r)-\frac{1}{4} e^{(\eta_1+\varepsilon) \xi} \\
&= \frac{1}{2} e^{\eta_1 \xi} \bigl(c\eta_1^3 + D\eta_1^2 - c\eta_1+e^{-\eta_1 r}\bigr)
 - \frac{q}{2} e^{(\eta_1 + \varepsilon)\xi}
   \bigl(c(\eta_1 + \varepsilon)^3 + D(\eta_1 + \varepsilon)^2 - c(\eta_1 + \varepsilon)+e^{-(\eta_1+\varepsilon) r}\bigr)\\
   &-\frac{1}{4} e^{(\eta_1+\varepsilon) \xi} \\
   &=\frac{1}{2} e^{\eta_1 \xi}\Delta_r(\eta_1) -\frac{q}{2} e^{(\eta_1 + \varepsilon)\xi}\Delta_r(\eta_1+\varepsilon)-\frac{1}{4} e^{(\eta_1+\varepsilon) \xi}\\
   &=\frac{1}{2}e^{(\eta_1 + \varepsilon)\xi}\left(-q\Delta_r(\eta_1+\varepsilon)-\frac{1}{2}\right)\ge 0
\end{split}
\]

since we can take
\[q\ge \max \left\{1, \frac{-1}{2\Delta_0(\eta_1+\varepsilon)}\right\}.\]
 \textbf{Case 2} $\xi_1<\xi+r$ we have $\underline\phi(\xi)=d_1$
  \[\begin{split}
  &B=	c\,\underline\phi'''(\xi) + D\,\underline\phi''(\xi) - c\,\underline\phi'(\xi) + \underline\phi(\xi-r)\bigl(1-\underline\phi(\xi)\bigr) \\
 &=  \underline\phi(\xi-r)\bigl(1-\underline\phi(\xi)\bigr)\ge 0.
 \end{split}\]

  \textbf{Case 3} $\xi_1+r<\xi$ This follows directly, since $\underline\phi(\xi)=d_1$ we have 

   \[\begin{split}
  &B=	c\,\underline\phi'''(\xi) + D\,\underline\phi''(\xi) - c\,\underline\phi'(\xi) + \underline\phi(\xi-r)\bigl(1-\underline\phi(\xi)\bigr) \\
 &=  \underline\phi(\xi-r)\bigl(1-\underline\phi(\xi)\bigr)\ge 0.
 \end{split}\]
 \end{proof}
 
 Combining Lemmas \ref{lem:super} and \ref{lem:sub} we obtain:
 
 \begin{corollary}\label{cor:app_exist}
 	Let $c,D>0$ be such that $\Delta_0(\eta)$ has two positive real roots and let $r\in[0,r_0]$ be sufficiently small as in Lemma \textnormal{\ref{lem:continuity_r}}. Then the pair $(\underline\phi,\overline\phi)$ constructed above is an ordered sub/super pair with
 	\[
 	0\le \underline\phi(\xi)\le \overline\phi(\xi),\qquad \xi\in\mathbb R,
 	\]
 	and consequently, by the monotone iteration scheme developed in Section~\ref{Main Results}, there exists a monotone traveling wave solution $\phi\in\Gamma$ of \eqref{appODE}.
 \end{corollary}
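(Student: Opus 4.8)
The plan is to collect the ingredients already in hand and feed them into the abstract existence result of the previous section. By the unnumbered lemma preceding the construction, the delayed logistic nonlinearity $f_c(\psi)=\psi(-r)\bigl(1-\psi(0)\bigr)$ satisfies (C1)--(C3) with $K=1$ and some $\beta\ge1$; by Lemma~\ref{lem:super} the piecewise-exponential $\overline\phi$ is a supersolution of \eqref{appODE} and, as noted there, $\overline\phi\in\Gamma$; and by Lemma~\ref{lem:sub} the piecewise-exponential $\underline\phi$ is a subsolution. Because $\underline\phi\equiv d_1$ for $\xi\ge\xi_1$ with $d_1\in(0,1)$, it tends to $d_1<1$ at $+\infty$ and hence does not belong to $\Gamma$; this is exactly the situation covered by Lemma~\ref{lem:main3}. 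Thus the only genuinely new work is to verify the ordering $0\le\underline\phi\le\overline\phi$ and the asymptotic requirement $\lim_{\xi\to+\infty}\underline\phi(\xi)=a\in(0,1]$.

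First I would fix the breakpoint so that $d_1=\tfrac12\bigl(1-qe^{\varepsilon\xi_1}\bigr)e^{\eta_1\xi_1}\in(0,1)$; since $q,\varepsilon>0$ are inherited from Lemmas~\ref{lem:continuity_r} and~\ref{lem:sub}, this amounts to choosing $\xi_1<-\varepsilon^{-1}\ln q$, which is always possible for $\xi_1$ sufficiently negative. Then I would check the ordering in two regions. On $\{\xi<\xi_1\}$ one has $\overline\phi(\xi)=\tfrac12 e^{\eta_1\xi}$ and $\underline\phi(\xi)=\tfrac12\bigl(1-qe^{\varepsilon\xi}\bigr)e^{\eta_1\xi}$; since $\xi<\xi_1<-\varepsilon^{-1}\ln q$ forces $0<1-qe^{\varepsilon\xi}<1$, we obtain $0<\underline\phi(\xi)<\overline\phi(\xi)$ there. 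On $\{\xi\ge\xi_1\}$ the lower solution is the constant $d_1>0$, while $\overline\phi$ is nondecreasing with $\overline\phi(\xi)\ge\overline\phi(\xi_1)=\tfrac12 e^{\eta_1\xi_1}>d_1$, so again $0<\underline\phi\le\overline\phi$. Combining the two regions yields $0\le\underline\phi\le\overline\phi$ on all of $\mathbb R$, and the constancy of $\underline\phi$ past $\xi_1$ gives $\lim_{\xi\to+\infty}\underline\phi(\xi)=d_1=:a\in(0,1)$.

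With these facts the conclusion is immediate: $\overline\phi\in\Gamma$ is a supersolution, $\underline\phi\in BC(\mathbb R,[0,1])$ is a subsolution with $0\le\underline\phi\le\overline\phi$ and $\lim_{\xi\to+\infty}\underline\phi=a>0$, so all hypotheses of Lemma~\ref{lem:main3} hold with $K=1$. That lemma then yields a nondecreasing fixed point $\phi\in\Gamma$ of $F=-\mathcal L^{-1}H$ --- equivalently a monotone traveling wave of \eqref{appODE} connecting $0$ and $1$ --- via the monotone iteration $\phi_n=F^n(\overline\phi)$, whose $\|\cdot\|_\mu$-limit exists by compactness of $F$ and whose plateau at $+\infty$ is pinned to $K=1$ using (C1), exactly as in the proof of Theorem~\ref{the main2}. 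I expect the only delicate point to be the bookkeeping in the ordering step, namely confirming that a single choice $\xi_1<-\varepsilon^{-1}\ln q$ simultaneously forces $d_1\in(0,1)$, keeps $\underline\phi\ge0$ on $\{\xi<\xi_1\}$, and secures the sub/super inequality at the match; since all three reduce to that one threshold, no conflict can arise, and the rest is a direct appeal to the already-proven lemmas.
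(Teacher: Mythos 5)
Your proposal is correct and follows the same route as the paper, which simply combines Lemmas~\ref{lem:super} and \ref{lem:sub} with the abstract iteration scheme; you usefully make explicit the two points the paper leaves implicit, namely the pointwise ordering $0\le\underline\phi\le\overline\phi$ (via the single threshold $\xi_1<-\varepsilon^{-1}\ln q$) and the fact that, since $\underline\phi\to d_1<1$, the relevant abstract result is Lemma~\ref{lem:main3} rather than Theorem~\ref{main1}. No gaps.
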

 
 \begin{remark}
 	The smallness restriction on $r$ in the lemmas is technical and stems from the continuity argument in Lemma~\ref{lem:continuity_r}. For larger delays one must perform a more delicate spectral analysis of $\Delta_r(\lambda)$ (possible instabilities or Hopf bifurcations may arise) and the construction of sub/supersolutions requires further adjustments.
 \end{remark}

\vskip 0.5cm
\noindent{\bf ACKNOWLEDGEMENTS}

The authors thank Prof. Nguyen Van Minh for various supports, carefully reading the manuscript and for his suggestions to improve the presentation. 
This research is funded by Vietnam National Foundation for Science
and Technology Development (NAFOSTED) under grant number 101.02-2023.17.
 
\noindent{\bf CONFLICT OF INTEREST}
 
This work does not have any conflicts of interest.


\begin{thebibliography}{00}








\bibitem{Aifantis} E.C. Aifantis, On the problem of diffusion in solids, {\it Acta Mech.}  37 (1980), 265-296.




\bibitem{BarkNguy}
Barker, W.K., and Nguyen, N.V., ``Traveling waves in reaction-diffusion equations with delay in both diffusion and reaction terms,'' \textit{arXiv preprint}, \textbf{arXiv:2301.11504} (2023). Submitted for final publication.




\bibitem{BarkNguyTha1}
Barker, W.K., Nguyen, M.V., and Thanh, N.T., ``Traveling waves for nonlocal Fisher-KPP equations with diffusive delay,'' \textit{Journal of Evolution Equations}, \textbf{25}(38) (2025).




\bibitem{BarkNguyTha2}
Barker, W.K., Nguyen, M.V., and Thanh, N.T., ``On monotone traveling waves of Mackey-Glass equation with delay in diffusion,'' \textit{Journal of Applied Analysis} (2025), to appear.




\bibitem{Bark}
Barker, W.K., ``Existence of traveling waves in a Nicholson blowflies model with delayed diffusion term,'' \textit{Proceedings of the American Mathematical Society}, https://doi.org/10.1090/proc/17233 (2025).




\bibitem{Diekmann}
Diekmann, O., van Gils, S.A., Verduyn Lunel, S.M.,  Walther, H.O. (1995),
\textit{Delay Equations: Functional, Complex, and Nonlinear Analysis.
Applied Mathematical Sciences}, Vol. 110, Springer-Verlag, New York.




\bibitem{7}
 Dieudonne, J., ``Foundations of Modern Analysis``. Academic Press. New York/London. 1960.






\bibitem{Jiang}
Jiang, B.E., and Yang, F.Y., ``Traveling wave fronts for a nonlocal system with delays in both diffusion and reaction terms,'' \textit{Journal of Evolution Equations}, \textbf{25}, 11 (2025). https://doi.org/10.1007/s00028-024-01037-7.
 
 \bibitem{ma}
Ma, S., ``Traveling waves for delayed reaction-diffusion systems via a fixed point theorem,'' \textit{Journal of Differential Equations}, \textbf{171} (2001), 294–314.




\bibitem{mal}
Mallet-Paret, J., ``The Fredholm alternative for function differential equations of ,'' \textit{Journal of Dynamics and Differential Equations}, \textbf{11} (1999), 1–47.
  
\bibitem{Peter} J.C. Peter and M.E. Gurtin, On a theory of heat conduction involving two temperatures, {\it Z. Angew. Math. Phys. } 19 (1968), 614-627.








 \bibitem{sch}
Schaaf, K., ``Asymptotic behavior and traveling wave solutions for parabolic functional differential equations,'' \textit{Transactions of the American Mathematical Society}, \textbf{302} (1987), 587–615.
















\bibitem{Ting} T.W. Ting, Certain non-steady flows of second-order fluids, {\it Arch. Ration. Mech. Anal.} 14 (1963), 1-26.
 








\bibitem{Truesdell} C. Truesdell and W. Noll, {\it The Nonlinear Field Theories of Mechanics}, Encyclomedia of Physics, Springer,  Berlin, 1995. 
 
 \bibitem{wuzou}
Wu, J., and Zou, X., ``Traveling wave fronts of reaction-diffusion equations with delay,'' \textit{Journal of Dynamics and Differential Equations}, \textbf{13} (2001), 651–687.








  \bibitem{Zhang2025}
Zhang, J., Hu, H., and Huang, C., ``Wave fronts for a class of delayed Fisher–KPP equations,'' \textit{Applied Mathematics Letters}, \textbf{135}, 109406 (April 2025).




\bibitem{Zeidler}
E. Zeidler, \textit{Nonlinear Functional Analysis and its Applications: I, Fixed-Point Theorems}, Springer, New York, 1986
\end{thebibliography}
\end{document}